\newcommand{\beq}{\begin{equation}}
\newcommand{\eeq}{\end{equation}}
\newcommand{\beqs}{\begin{equation*}}
\newcommand{\eeqs}{\end{equation*}}
\newcommand{\ba}{\begin{array}}
\newcommand{\ea}{\end{array}}
\newcommand{\beas}{\begin{eqnarray*}}
\newcommand{\eeas}{\end{eqnarray*}}
\newcommand{\bea}{\begin{eqnarray}}
\newcommand{\eea}{\end{eqnarray}}
\newcommand{\bal}{\begin{align}}
\newcommand{\eal}{\end{align}}
\newcommand{\bals}{\begin{align*}}
\newcommand{\eals}{\end{align*}}
\newcommand{\R}{\ensuremath{\mathbb R}}
\newcommand{\N}{\ensuremath{\mathbb N}}
\newcommand{\bds}{\begin{displaystyle}}
\newcommand{\eds}{\end{displaystyle}}
\renewcommand{\eqref}[1]{(\ref{#1})}
\def\longequals{\mathbin{=\kern-2pt=}}
\def\eqdef{\mathbin{\buildrel \rm def \over \longequals}}
\def\varep{\varepsilon}
\newcommand{\dx}[2]{ \frac{\partial  {#1} }   {\partial {#2}} }
\newcommand{\ddx}[3]{ \frac{\partial^2  {#1} } { \partial {#2} \partial {#3}  } } 
\newcommand{\remove}[1]{} 
\renewcommand{\remove}[1]{#1} 
\newtheorem{theorem}{Theorem}[section]
\newtheorem{lemma}[theorem]{Lemma}
\newtheorem{corollary}[theorem]{Corollary}
\newtheorem{proposition}[theorem]{Proposition}
\newtheorem{remark}[theorem]{\bf{Remark}}
\theoremstyle{remark}
\newtheorem{example}[theorem]{\bf{Example}}
\numberwithin{equation}{section}
\definecolor{darkred}{rgb}{.70,.12,.20}
\definecolor{darkgreen}{rgb}{.20,.52,.14}
\def\myclearpage{}
\newcommand{\U}{\mathbf{u} }
\newcommand{\V}{\mathbf{v} } 
\newcommand{\G}{\mathbf{G} }
\newcommand{\x}{\mathbf x}
\newcommand{\Am}{ \underline{\mathbf{A} } }
\newcommand{\Bm}{ \underline{\mathbf{B} } }
\newcommand{\Idn}{ \underline{\mathbf{I}}_n }
\newcommand{\Rmax}{ R_{\max} }
\newcommand{\fo}{f_0}
\newcommand{\deltaz}{N}
\newcommand{\deltazero}{\Delta_9}
\newcommand{\deltao}{\Delta_1}  
\newcommand{\deltatw}{\Delta_2}  
\newcommand{\deltath}{\Delta_4}  
\newcommand{\deltathp}{\Delta_5}  
\newcommand{\deltafo}{\Delta_3}  
\newcommand{\deltafi}{\Delta_6}
\newcommand{\deltasi}{\Delta_7}
\newcommand{\deltafiT}{\Delta_{6,T}}
\newcommand{\deltasiT}{\Delta_{7,T}}
\newcommand{\deltaei}{\Delta_{11}}  
\newcommand{\deltani}{\Delta_{13}}
\newcommand{\deltaten}{\Delta_{8}}    
\newcommand{\deltaele}{\Delta_{10}}    
\newcommand{\deltatwe}{\Delta_{12}}    
\newcommand{\mscrD}{\mathscr D}
\newcommand{\mscrU}{\mathscr U}
\newcommand{\mscrO}{\mathcal O}
\newcommand{\kap}{\kappa_0}
\newcommand{\kapo}{\kappa_1}
\newcommand{\kaptw}{\kappa_2}
\newcommand{\Sph}{\mathcal S}
\newcommand{\condEa}{\textbf{(E1)}}
\newcommand{\condEb}{\textbf{(E2)}}
\title{A family of steady two-phase generalized Forchheimer flows and their linear stability analysis}
\author{Luan T. Hoang, Akif Ibragimov and Thinh T. Kieu$^\dag$}
\address{Department of Mathematics and Statistics, Texas Tech University, Box 41042
Lubbock, TX 79409--1042, U.S.A.}
\email{luan.hoang@ttu.edu}
\email{akif.ibraguimov@ttu.edu}
\email{thinh.kieu@ttu.edu}
\address{$^\dag$ Corresponding author}
\date{\today}
\begin{document}

\begin{abstract}
We model multi-dimensional two-phase flows of incompressible fluids in porous media using generalized Forchheimer equations and the capillary pressure.
Firstly, we find a family  of steady state solutions whose saturation and pressure are radially symmetric and velocities are rotation-invariant. 
Their properties are investigated based on relations between the capillary pressure, each phase's relative permeability and Forchheimer polynomial.
Secondly, we analyze the linear stability of those steady states. 
 The linearized system is derived and reduced to a parabolic equation for the saturation. This equation has a special structure  depending on the steady states which we exploit to prove two new forms of the lemma of growth of Landis-type in both bounded and unbounded  domains. Using these lemmas, qualitative properties of the solution of the linearized equation are studied in details. In bounded domains, we show that the solution decays exponentially in time. In unbounded domains, in addition to their stability, the solution decays to zero as the spatial variables tend to infinity. The Bernstein technique is also used in estimating the velocities. All results have a clear physical interpretation.
\end{abstract}

\maketitle

\begin{center}
 { \large \textit{Dedicated to the Memory of Evgenii Mikhailovich Landis (1921--1997)} }
\end{center}

\tableofcontents

\pagestyle{myheadings}\markboth{L. Hoang, A. Ibragimov and T. Kieu}
{Steady Two-phase Generalized Forchheimer Flows and Their Linear Stability Analysis}
\myclearpage
\section{Introduction}\label{Intro}

In this paper, we study two-phase flows of incompressible fluids in porous media with each phase subjected to a Forchheimer equation.
Forchheimer equations are often used by engineers to take into account the deviation from Darcy's law in case of high velocity, see e.g. \cite{BearBook,Straughanbook}. The standard Forchheimer equations are two-term law with quadratic nonlinearity, three-term law with cubic nonlinearity, and power law with a non-integer power less than two (see again \cite{BearBook,Straughanbook}).
These models are extended to the generalized Forchheimer equation of the form 
\beq 
g(|\mathbf u|)\mathbf u=-\nabla p,
\eeq
where $\mathbf u(\x,t)$ is the velocity field, $p(\x,t)$ is the pressure, and $g(s)$ is a generalized polynomial of arbitrary order (integer or non-integer) with positive coefficients.
This equation was intensively analyzed for single-phase flows from mathematical and applied point of view  in \cite{ABHI1,HI1,HI2,HIKS1,HKP1}. 
Its study for two-phase flows  was later initiated in \cite{HIK1}. 
Regarding two-phase flows in porous media, it is always a challenging subject even for Darcy's law. 
Their models involve a complicated system of nonlinear partial differential equations (PDE) for pressures, velocities, densities and saturations with many parameters such as porosity, relative permeability functions and capillary pressure function.
Current analysis of two-phase Darcy flows in literature is mainly focused on the existence of weak solutions \cite{Cances2009a,Cances2009b,Brull2008} and their regularity \cite{Kruzhkov77, Kruzhkov85, AltDi84,AltDi85,DiIUMJ2011}. 
However, questions about the stability and dynamics are not answered.
The nonlinearity of the relative permeabilities and capillary pressure and their imprecise characteristics near the extreme values make it hard to analyze the modeling PDE system.
The two-phase generalized Forchheimer flows are even more difficult due to the additional nonlinearity in the momentum equation.
For example, unlike the Darcy flows, there is no Kruzkov-Sukorjanski transformation \cite{Kruzhkov77} to convert the system to a convenient form for the total velocity. Therefore, new methods are needed for the Forchheimer flows. 
In \cite{HIK1}, we study the one-dimensional case using a novel approach. 
We will develop the techniques in \cite{HIK1} further to investigate the multi-dimensional case in this article.


We consider $n$-dimensional two-phase flows in porous media with constant porosity  $\phi$ between $0$ and $1$.  
Here the dimension $n$ is greater or equal to $2$, even though in practice we only need $n=2,3$.
Each position $\x=(x_1,x_2,\ldots,x_n)\in\R^n$ in the medium is considered to be occupied by two fluids called phase 1 (for example, water) and phase 2 (for example, oil).

Saturation, density, velocity, and pressure for each $i$th-phase ($i=1,2$) are $S_i\in[0,1]$, $\rho_i\ge 0$, $\bf u_i\in \R^n$ and $p_i\in\R$, respectively. The saturation functions naturally satisfy 
\beq\label{Snat}
  S_1+S_2=1.
\eeq

Each phase's velocity is assumed to obey the generalized Forchheimer equation:
\beq\label{Forch} 
g_i(|\mathbf u_i|)\mathbf u_i=-\tilde f_i(S_i)\nabla p_i,\quad i=1,2,\eeq 
where $\tilde f_i(S_i)$ is the relative permeability for the $i$th phase, and $g_i$ is of the form
\beq\label{gform}
g_i(s)=a_0 s^{\alpha_0} +a_1 s^{\alpha_1}+\ldots +a_N s^{\alpha_N}, \quad s\ge 0,
\eeq 
with $N\ge 0$, $a_0>0,$ $a_1,\ldots a_N\ge 0$, $\alpha_0=0<\alpha_1<\ldots \alpha_N$, all $\alpha_1,\ldots\alpha_N$ are real numbers.
The above $N$, $a_j$, $\alpha_j$ in \eqref{gform} depend on each $i$.
We call $g_i(s)$ in \eqref{gform} the Forchheimer polynomial of \eqref{Forch}.

Conservation of mass commonly holds for each of the phases:
\beq\label{mass0}
  \partial_t(\phi \rho_i S_i)+ {\rm div} (\rho_i \mathbf u_i)=0, \quad i=1,2.\eeq

Due to incompressibility of the phases, i.e. $\rho_i=const.>0$, Eq. \eqref{mass0} is reduced to
\beq\label{mass}
\phi \partial_t S_i + {\rm div}\, \mathbf u_i=0,\quad i=1,2.\eeq

Let $p_c$ be the capillary pressure between two phases, more specifically,
\beq\label{cap0} p_1-p_2=p_c.\eeq
 
Hereafterward, we denote $S=S_1$. The relative permeabilities and capillary pressure are re-denoted as functions of $S$, that is, $\tilde f_1(S_1)=f_1(S)$, $\tilde f_2(S_2)=f_2(S)$ and $p_c=p_c(S)$. Then \eqref{Forch} and \eqref{cap0} become
\beq\label{Forch-S} 
g_i(|{\mathbf u_i}|){\mathbf u_i}=- f_i(S)\nabla p_i,\quad i=1,2,\eeq 
\beq\label{cap} p_1-p_2=p_c(S).\eeq

By scaling time, we can mathematically consider, without loss of generality, $\phi=1$.

By \eqref{Snat} and \eqref{mass}: 
\beq\label{Sdot} S_t=- {\rm div}\, {\bf u}_1,\quad S_t={\rm div}\, {\bf u}_2.\eeq

For $i=1,2$, define the function $\G_i({\bf u})=g_i(|{\bf u}|){\bf u}$ for ${\bf u}\in \R^n$. 
Then by \eqref{Forch-S},
\beq\label{pprime}
\G_i({\bf u}_i)=-f_i(S)\nabla p_i, \quad \text{or,}\quad \nabla p_i=- \frac{\G_i({\bf u}_i)}{f_i(S)}.
\eeq
Taking gradient of the equation \eqref{cap} we have
\beq\label{pcprime} \nabla p_1- \nabla p_2=p_c'(S) \nabla S.
\eeq
Substituting \eqref{pprime} into \eqref{pcprime} yields
\beqs
\frac{g_2(|\mathbf u_2|)\mathbf u_2}{f_2(S)} - \frac{g_1(|\mathbf u_1|)\mathbf u_1}{f_1(S)}=p_c'(S)\nabla S,
\eeqs
hence
\beqs  
F_2(S) g_2(|{\bf u}_2|)\mathbf u_2-F_1(S) g_1(|\mathbf u_1|)\mathbf u_1=\nabla S , \eeqs
where
\beq \label{Fdef}
F_i(S)=\frac{1}{p_c'(S)f_i(S)},\quad i=1,2.
\eeq 

 In summary we study the following PDE system for $\x\in\R^n$ and $t\in \R$:
\begin{subequations}\label{mainsys}
\begin{align}
&\label{S01} 0\le S=S(\mathbf x,t)\le 1,\\
&\label{Sdot1} S_t=- {\rm div}\,  {\mathbf u}_1 ,\\
&\label{Sdot2} S_t={\rm div}\,  {\mathbf u}_2,\\
&\label{Sgstar} \nabla S=F_2(S)\G_2(\U_2)- F_1(S)\G_1(\U_1).
\end{align}
\end{subequations}


This paper is devoted to studying system \eqref{mainsys}.
We will obtain a family of non-constant steady states with particular geometric properties.
Specifically, the saturation and pressure are functions of $|\x|$, while each phase's velocity is $\x$ multiplied by a radial scalar function.
Their properties, particularly, the behavior as $|\x|\to\infty$, will be obtained.
For the stability study, we linearize system \eqref{mainsys} at these steady states. We deduce from this linearized system a parabolic equation for the saturation.
In bounded domains, we establish the lemma of growth in time and prove the exponential decay of its solutions in sup-norm as time $t\to\infty$.
In unbounded domains, we prove the maximum principle and the stability. 
Furthermore, we show that the solutions go to zero as the spatial variables tend to infinity.

The paper is organized as follows.
In section \ref{Steady} we find the family of non-constant steady states described above. 
Various sufficient conditions are given for their existence in unbounded domains (Theorems \ref{globalODE}).
Their asymptotic behavior as $|\x|\to\infty$ is studied in details.
In section \ref{linearization}, we linearize the originally system at the obtained steady states.
We derive a parabolic equation for the saturation which will become the focus of our study.
It is then converted to a convenient form for the study of sup-norm of solutions. Such a conversion is possible thanks to the special structure of the equation and of the steady states. Preliminary properties of the coefficient functions of this linearized equation are presented.
Section \ref{bounded} is focused on the study of the linearized equation for saturation in bounded domains.
We prove the asymptotic stability results (Theorems \ref{Bsig} and \ref{mainv}) by utilizing a variation of Landis's lemma of growth in time variable (Lemma \ref{grow}).  The Bernstein's a priori estimate technique is used in proving interior continuous dependence of the velocities on the initial and boundary data (Proposition \ref{gradw}).
In section \ref{unbounded}, we study the linearized equation in an (unbounded) outer domain. 
The maximum principle (Theorem \ref{maximumprinciple}) is proved and used to obtain the stability of the zero solution (Theorems \ref{uniqS3} and \ref{sta1}, part (ii)).
We also prove a lemma of growth in the spatial variables (Lemma \ref{Cmp}) 
by constructing particular barriers (super-solutions) using the specific structure of the linearized equation  for saturation (Lemma \ref{Sub-sol}). 
Using this, we prove a dichotomy theorem on the solution's behavior (Lemma \ref{cyls}), and ultimately show that the solution, on any finite time interval, decays to zero as $|\x|\to\infty$. 
For time tending to infinity, we find an increasing, continuous function $r(t)>0$  with $r(t)\to\infty$ as $t\to\infty$ such that along any curve $\x(t)$ with $|\x(t)|\ge r(t)$, the solution goes to zero. (See Theorems \ref{uniqS3} and \ref{sta1}, part (iii).)
It is worth mentioning that the asymptotic stability in sup-norm in section \ref{bounded} and behavior of the solution at spatial infinity have their own merits in the qualitative theory of linear parabolic equations.

%

\myclearpage
\section{Special steady states}\label{Steady}

In this section we find and study steady states which processes some symmetry. 

Assume $p_i$ and $S$ are radial functions. We can write
\beq 
p_i({\bf x},t) =p_i(r,t),\quad S({\bf x},t)=S(r,t), \quad\text{where } r=|\x|=\big(\sum_{i=1}^n x_i^2\big)^{1/2}.
\eeq  
Denote ${\bf e}_r=\x/|\x|$. By \eqref{Forch-S},
\beq\label{ui0}
g_i(|\U_i|){\U_i} =- f_i(S)\frac {\partial p_i}{\partial r}\cdot \frac {\bf x} {r}=- f_i(S)\frac {\partial p_i}{\partial r}{\bf e}_r.
\eeq
Noting in \eqref{ui0} that $f_i(S)\frac {\partial p_i}{\partial r}$ is radial, then clearly $|{\bf u}_i|$ is also radial and we have
\beq
{\bf u}_i=u_{ir} {\bf e}_r,\quad \text{where }u_{ir} ={\bf u}_i\cdot {\bf e}_r=u_{ir}(r,t).
\eeq
Therefore  
\beq
{\rm div}\, \U_i =\frac 1{r^{n-1}}\frac {\partial}{\partial r} (r^{n-1}u_{ir} )
\eeq
and, from \eqref{Sgstar},
\beq\label{Feq0}
F_2(S)g_2(|\U_2|)\U_2-F_1(S)g_1(|\U_1|){\U_1}=\nabla S = \frac{\partial S}{\partial r}{\bf e}_r.
\eeq
Taking the scalar product of both sides of \eqref{Feq0} with ${\bf e}_r$ we obtain   
\beq\label{Feq}
G_2(u_{2r})F_2(S)-G_1(u_{1r})F_1(S)= \frac{\partial S}{\partial r},
\eeq
where 
\beq\label{Gidef}
G_i(u)=g_i(|u|)u\quad\text{for }u\in \R.
\eeq

We will study $S(r,t)$ and $u_i(r,t)\eqdef u_{ir}$ ($i=1,2$) as functions of independent variables $(r,t)\in (0,\infty)\times \R$.
The system \eqref{mainsys} becomes
\begin{subequations}\label{Rsys}
\begin{align}
&\label{S} 0\le S\le 1,\\
&\label{St1} \frac{\partial S}{\partial t}=-r^{1-n}\frac{\partial }{\partial r} (r^{n-1}u_1) ,\\
&\label{St2} \frac{\partial S}{\partial t}=r^{1-n}\frac{\partial }{\partial r} (r^{n-1}u_2),\\
&\label{Sx} \frac{\partial S}{\partial r}=G_2(u_2)F_2(S)- G_1(u_1)F_1(S).
\end{align}
\end{subequations}

We make basic assumptions on the relative permeabilities and capillary pressure.
 
\textbf{Assumption A.}
\begin{subequations}
\beq f_1,f_2\in C([0,1])\cap C^1((0,1)),\eeq 
\beq\label {f1f2}   f_1(0)=0,\quad f_2(1)=0,\eeq
\beq f_1'(S)>0,\quad f_2'(S)<0 \text{ on } (0,1).\eeq 
\end{subequations}

\textbf{Assumption B.}
\beq\label{pcincr} p_c'\in C^1((0,1)),\quad p'_c(S)>0 \text{ on } (0,1).\eeq

We find steady state solutions $(S,u_1,u_2)=(S(r),u_1(r),u_2(r))$ for system \eqref{Rsys} in the domain $[r_0,\infty)$ for a fixed $r_0>0$.

From \eqref{St1}, we have $\frac{d}{dr} (r^{n-1}u_i)=0$, hence 
\beq\label{ui}
u_i(r)=c_i r^{1-n}, \quad \text{where } c_i=const.,\quad i=1,2.
\eeq
Substituting \eqref{ui} into \eqref{Sgstar} yields  
\beq\label{Sprime}
S'=G_2(c_2r^{1-n} )F_2(S)-G_1(c_1r^{1-n})F_1(S)\quad \text{for } r>r_0.  
\eeq
The rest of this section is devoted to studying the following initial value problem with constraints:   
\beq\label{IVP}
S'= F(r,S(r))\quad \text{for } r>r_0, \quad S(r_0)=s_0, \quad  0<S(r)<1. 
\eeq
where $s_0$ is always a number in $(0,1)$ and
\beqs
F(r, S(r)) =G_2(c_2r^{1-n} )F_2(S)-G_1(c_1r^{1-n})F_1(S). 
\eeqs

First we state a standard local existence theorem.
\begin{theorem}\label{localODE} 
There exist a maximal interval of existence $[r_0,R_{\rm max})$, where $R_{\rm max}\in (r_0,\infty]$, and a unique solution $S\in C^1([r_0,R_{\rm max}))$ of \eqref{IVP} on $(r_0,R_{\rm max})$.
 Moreover, if $R_{\rm max}$ is finite then either
 \beq\label{odelim}
 \lim_{r\to R_{\rm max}^-} S(r)=0 \quad \text{ or}   \lim_{r\to R_{\rm max}^-} S(r)=1.
 \eeq
\end{theorem}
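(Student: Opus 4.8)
The plan is to invoke the Picard--Lindel\"of theorem for the scalar ODE $S' = F(r,S)$ and then analyze the escape behavior at a finite right endpoint. First I would verify that $F(r,S)$ is continuous in $r$ and locally Lipschitz in $S$ on the open strip $\{(r,S): r > r_0,\ 0 < S < 1\}$: the factors $G_i(c_i r^{1-n})$ are continuous (indeed smooth) in $r$ for $r > 0$ since $g_i$ is a generalized polynomial with nonnegative exponents, and $F_i(S) = 1/(p_c'(S) f_i(S))$ is $C^1$ on $(0,1)$ by Assumptions A and B (note $f_1 > 0$, $f_2 > 0$ strictly on $(0,1)$ because $f_1(0)=0$ with $f_1' > 0$ forces $f_1 > 0$ on $(0,1)$, and similarly $f_2 > 0$ on $(0,1)$; also $p_c' > 0$ there, so no division by zero inside the strip). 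Hence for the interior initial value $S(r_0) = s_0 \in (0,1)$ there is a unique local $C^1$ solution, and it extends to a maximal interval $[r_0, R_{\max})$ by the standard continuation argument; along the way the solution is $C^1$ because $S' = F(r,S(r))$ is continuous.

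Next I would establish the continuation/blow-up alternative. Suppose $R_{\max} < \infty$. The standard ODE continuation principle says the solution must leave every compact subset of the strip $(r_0,\infty)\times(0,1)$ as $r \to R_{\max}^-$; since $r$ stays in the compact interval $[r_0, R_{\max}]$, the only way to escape a compact set is for $S(r)$ to approach the boundary of $(0,1)$, i.e.\ $\liminf_{r\to R_{\max}^-} S(r) = 0$ or $\limsup_{r\to R_{\max}^-} S(r) = 1$. The content of \eqref{odelim} is the stronger claim that the genuine limit exists. To upgrade $\liminf$/$\limsup$ to $\lim$, I would use that on the compact set $[r_0, R_{\max}]$ the coefficients $G_i(c_i r^{1-n})$ are bounded, say by a constant $M$, while $F_i$ is bounded on any compact subinterval $[\delta, 1-\delta]$ of $(0,1)$; so whenever $S(r) \in [\delta, 1-\delta]$ we have $|S'(r)| \le K_\delta$ for a constant $K_\delta$ depending only on $\delta$ (and $M$). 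This uniform derivative bound on the ``middle band'' prevents oscillation between the two endpoints: if $S$ returned to, say, the level $1/2$ at a sequence of $r$-values accumulating at $R_{\max}$ while also dipping near $0$ or rising near $1$ infinitely often, it would have to traverse a fixed band $[\delta, 1-\delta]$ infinitely many times on a shrinking set of $r$-values, contradicting $|S'| \le K_\delta$ there. A clean way to package this: show that $S(r)$ is eventually monotone near $R_{\max}$, or directly that once $S$ enters a small neighborhood of $0$ (resp.\ $1$) it cannot return to level $\delta$ (resp.\ $1-\delta$) because the time needed exceeds $R_{\max} - r$; either way $\lim_{r\to R_{\max}^-} S(r)$ exists and equals $0$ or $1$.

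The main obstacle is precisely this last step --- converting the soft ``escapes every compact set'' conclusion into the existence of a one-sided limit. The subtlety is that $F$ has no sign control a priori (it is a difference of two positive terms whose relative sizes depend on $r$ and $S$), so one cannot simply say $S$ is monotone. The resolution rests on the \emph{finiteness} of $R_{\max}$ together with the uniform bound $|S'| \le K_\delta$ on each band $[\delta,1-\delta]$: a function on a finite interval with bounded derivative on a band has bounded total variation restricted to that band, so it can cross the band only finitely often; feeding this into a diagonal argument over $\delta = 1/k$ forces the limit to exist. I would also remark that the case $R_{\max} = \infty$ requires nothing further for this theorem --- global behavior and the many refinements (e.g.\ Theorem \ref{globalODE}) are treated separately --- so here it suffices to state the dichotomy \eqref{odelim} under the hypothesis that $R_{\max}$ is finite.
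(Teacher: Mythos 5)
Your argument is correct, and the local-existence part matches the paper's (Picard--Lindel\"of on the open strip, local Lipschitz continuity of $F$ in $S$). The escape-alternative argument, however, takes a genuinely different and somewhat more circuitous route. The paper proves, via a uniform step-size contradiction, the \emph{strong} form of the continuation principle: for each $\varepsilon$ there is an $R_\varepsilon$ with $(r,S(r)) \notin [r_0,R_{\max}]\times[\varepsilon,1-\varepsilon]$ for \emph{every} $r \in (R_\varepsilon,R_{\max})$, not merely along a subsequence. Combined with the intermediate value theorem --- the continuous curve $S$ cannot pass between $(0,\varepsilon)$ and $(1-\varepsilon,1)$ without re-entering $[\varepsilon,1-\varepsilon]$ --- this forces $S<\varepsilon$ on all of $(R_\varepsilon,R_{\max})$ or $S>1-\varepsilon$ on all of $(R_\varepsilon,R_{\max})$; fixing which case holds at a reference level $\varepsilon_0$ and letting $\varepsilon\downarrow 0$ then gives the genuine limit at once. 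You instead extract from the continuation principle only the weaker $\liminf/\limsup$ statement and compensate with the derivative bound $|S'|\le K_\delta$ on each band $[\delta,1-\delta]$: since each traverse of such a band costs at least $(1-2\delta)/K_\delta$ of $r$-time and $R_{\max}-r_0$ is finite, only finitely many traverses occur, so the liminf and limsup must coincide. This traverse-counting argument is valid and reaches the same conclusion, but it is not needed once you note that the standard continuation principle already delivers permanent escape. Two small remarks: the aside that ``$S$ is eventually monotone near $R_{\max}$'' is not a consequence of a one-sided derivative bound and should be dropped --- the direct no-return argument you give immediately afterward is the right finishing move; and your verification that $F$ is locally Lipschitz and that $|F|\le K_\delta$ on bands is correct and matches what the paper implicitly relies on. In terms of what each route buys: the paper's is shorter and self-contained (it proves the escape claim it uses); yours sidesteps the need for the full strength of the escape claim, which could be useful in settings where only a weaker escape statement is available, at the price of the extra band-crossing bookkeeping.
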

\begin{proof} 

Under Assumption B,  $F(r,S)$ is continuous and locally Lipschitz for the second variable for all $r\in(r_0, \infty) $, $S\in (0,1)$. 
The existence of the unique solution $S\in C^1([r_0,R_{\rm max});(0,1))$ on the maximal interval $[0,R_{\rm max})$ is classical. 

Assume $R_{\max}<\infty$. For given $0<\varep \le \varep_0\eqdef \min\{1/4,R_{\rm max}/2\}$, let $K=[r_0,R_{\max} ]\times[\varep, 1-\varep]$.  We claim that there is $R_\varep \ge r_0$ such that $ (r,S(r))\notin K$ for all $ r\in (R_\varep, R_{\max})$. Suppose not, then there is the sequence $r_i\to R_{\max}$ as $i\to\infty$ such that $( r_i, S(r_i))\in K$ for all $i$. 
Choose $N>0$ such that for all $i\ge N$, 
  \beqs
    \{ (r,S):   |r-r_i|\le \varep/2 \text { and } |S-S(r_i)|\le \varep/2\} \subset K'  ,
  \eeqs
  where $K'=[r_0,R_{\max}+\varep/2 ]\times[\varep/2, 1-\varep/2]$. According to  the local Existence and Uniqueness theorem (Theorem 3.1 p. 18 in\cite{JHale78}) the solution starting at point $ (r_i, S(r_i))$ exists on the interval $[r_i,r_i+d)$, where $d=\min\{\frac 1 L, \frac \varep 2,\frac{\varep} {2M} \}$ with $M=\max_{K'} |F(r,S)|$ and $L$ being the Lipschitz constant for $F$ in $K'$.
Note that $d$ is independent of $i$. 
Let $i$ be sufficiently large such that $r_i +d > R_{\max}$, then solution $S(r)$ exists beyond $R_{\max}$ which is a contradiction to maximality of $R_{\max}$.  Hence our claim is true. Now using the continuity of $S(r)$ we have 
\beq\label{choice} 
\text{either  } S(r)> 1-\varep, \forall r\in(R_{\varep}, R_{\max}) \text{  or } S(r)< \varep, \forall r\in(R_{\varep}, R_{\max}).
\eeq      
In particular, for $\varep=\varep_0$  we have either (a) $S(r)>1-\varep_0, \forall r\in (R_{\varep_0}, R_{\max})$, or (b) $S(r)<\varep_0, \forall r\in(R_{\varep_0}, R_{\max})$. 
In case (a), it is easy to see from \eqref{choice} that for $0<\varep<\varep_0$, $S(r)> 1-\varep, \forall r\in(R'_{\varep}, R_{\max})$
where $R'_\varep=\max\{R_{\varep_0}, R_\varep\}$. Thus, $\lim_{r\to R_{\max}^-} S(r) =1$. Similarly, for the case (b) we have  $\lim_{r\to R_{\max}^-} S(r) =0$. The  proof is complete.
\end{proof}

Next, we are interested in the case  $R_{\rm max}=\infty$. First, we find sufficient conditions for that. 
We need to make the following assumptions on the relative permeabilities and capillary pressure:
\beq\label{pcfcond} \lim_{S\to 0} p'_c(S)f_1(S)=\lim_{S\to 1} p'_c(S) f_2(S)=+\infty. \eeq 
These are our interpretation of experimental data (c.f. \cite{BearBook}), especially of those obtained in \cite{BrooksCorey64}. They cover certain scenarios of two-phase fluids in reality.

By \eqref{Fdef} and \eqref{pcfcond}, $F_1$ and $F_2$ can now be extended to functions of class $C([0,1])\cap C^1((0,1))$ and satisfy 
\beq\label{zeroends}
F_1(0)=F_1(1)=F_2(0)=F_2(1)=0.
\eeq
Therefore the right hand side of \eqref{Sgstar} is well-defined for all $S\in [0, 1]$.
Note that
\beq\label{quotlim}
\lim_{S\to 0^+} \frac{F_1(S)}{F_2(S)}  =\lim_{S\to 1^-} \frac{F_2(S)}{F_1(S)} =\infty.
\eeq

The following additional conditions on $F_1$ and $F_2$ will be referred to in our considerations:
 \beq \label{Sto0}  
\limsup_{S\to 0^+} F_1'(S)<\infty,
\eeq
 \beq \label{Sto1F1} 
\liminf_{S\to 1^-}F'_1(S)>-\infty.
 \eeq
 \beq \label{Sto1} 
\liminf_{S\to 1^-}F'_2(S)>-\infty.
 \eeq
 \beq \label{Sto0F2}  
\limsup_{S\to 0^+} F_2'(S)<\infty,
\eeq


\begin{theorem}\label{globalODE} Assume \eqref{pcfcond} and $c_1^2+c_2^2>0$. Then $R_{\max} $ in Theorem \ref{localODE} is infinity, that is, the solution $S(r)$ of \eqref{IVP} exists on $[r_0,\infty)$,  in the following cases

{Case 1a.} $c_2\le 0<c_1$  and \eqref{Sto0}.
\quad\quad\quad\quad
{Case 1b.} $c_1=0> c_2$  and \eqref{Sto0F2}.

{Case 2a.} $c_1\le 0<c_2$ and \eqref{Sto1}.
\quad\quad\quad\quad
{Case 2b.} $c_2=0> c_1$  and \eqref{Sto1F1}.

{Case 3.} $c_1,c_2>0$ and \eqref{Sto0}, \eqref{Sto1}. 

{Case 4.} $c_1,c_2<0$.
\end{theorem}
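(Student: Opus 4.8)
\emph{Proof strategy.} The plan is to argue by contradiction: assume $R_{\max}<\infty$. By Theorem \ref{localODE} we then have $\lim_{r\to R_{\max}^-}S(r)=0$ or $=1$, so it is enough to rule out, in each of the six cases, both the limit $0$ and the limit $1$ as $r\to R_{\max}^-$; the contradiction will force $R_{\max}=\infty$. Throughout I would use the consequences of \eqref{pcfcond} already recorded before the statement --- $F_1,F_2\in C([0,1])$, both positive on $(0,1)$ and vanishing at $0$ and $1$ by \eqref{zeroends}, and the domination \eqref{quotlim}, which gives $F_2(S)\le F_1(S)$ for $S$ near $0$ and $F_1(S)\le F_2(S)$ for $S$ near $1$ --- together with uniform bounds on the coefficients of \eqref{Sprime}: since $n\ge 2$, $r\mapsto r^{1-n}$ is decreasing, so $r^{1-n}$ stays in $[R_{\max}^{1-n},r_0^{1-n}]\subset(0,\infty)$, and since $G_i(u)=g_i(|u|)u$ is odd, continuous and strictly increasing on $\R$ (because $a_0>0$ in \eqref{gform}), one has constants $0<m_i\le M_i$ (depending on $R_{\max}$) with $m_i\le|G_i(c_ir^{1-n})|\le M_i$ on $[r_0,R_{\max})$ whenever $c_i\ne 0$, and $|G_i(c_ir^{1-n})|\le M_i$ in all cases.

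First I would dispose of the easy halves by monotonicity. In Cases 1a and 1b, $c_2\le 0\le c_1$, so the right-hand side of \eqref{Sprime} is $\le 0$, $S$ is non-increasing, $S\le s_0<1$, and the limit $1$ is impossible. Symmetrically, in Cases 2a and 2b, $c_1\le 0\le c_2$ gives a nonnegative right-hand side, so $S\ge s_0>0$ and the limit $0$ is impossible. For the remaining endpoint in those four cases, and for both endpoints in Case 3, I would run a Gronwall argument: the one-sided derivative bound --- \eqref{Sto0} or \eqref{Sto0F2} near $S=0$, \eqref{Sto1} or \eqref{Sto1F1} near $S=1$ --- together with $F_i(0)=0$ (resp.\ $F_i(1)=0$) and the mean value theorem gives, for the index relevant to that case, a linear estimate $F_i(S)\le CS$ near $S=0$ (resp.\ $F_i(S)\le C(1-S)$ near $S=1$); \eqref{quotlim} transfers it to the other index, and with $|G_i(c_ir^{1-n})|\le M_i$ this yields $|S'|\le CS$ near $S=0$ (resp.\ $|S'|\le C(1-S)$ near $S=1$). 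If $S(r)\to 0$ (resp.\ $\to 1$), choose $r_1<R_{\max}$ past which $S$ stays in that neighborhood; Gronwall on the \emph{finite} interval $[r_1,R_{\max})$ then gives $S(r)\ge S(r_1)e^{-C(R_{\max}-r_1)}>0$ (resp.\ $1-S(r)\ge(1-S(r_1))e^{-C(R_{\max}-r_1)}>0$), contradicting the assumed limit.

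Case 4 ($c_1,c_2<0$) carries no extra hypothesis, and there I would instead use \eqref{quotlim} to show the trajectory is pushed back into $(0,1)$ near each endpoint. Writing $S'=F_1(S)\big[G_2(c_2r^{1-n})\tfrac{F_2(S)}{F_1(S)}-G_1(c_1r^{1-n})\big]$, and using $\tfrac{F_2(S)}{F_1(S)}\to 0$ as $S\to 0^+$ while $-G_1(c_1r^{1-n})\ge m_1>0$ and $|G_2(c_2r^{1-n})|\le M_2$ uniformly in $r$, the bracket is $\ge m_1/2>0$ for $S$ small, so $S'>0$ there; hence $S(r)\to 0$ is impossible since $S$ would be eventually increasing. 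Symmetrically, $S'=F_2(S)\big[G_2(c_2r^{1-n})-G_1(c_1r^{1-n})\tfrac{F_1(S)}{F_2(S)}\big]$ with $\tfrac{F_1(S)}{F_2(S)}\to 0$ as $S\to 1^-$ and $-G_2(c_2r^{1-n})\ge m_2>0$ shows $S'<0$ near $S=1$, so the limit $1$ is impossible as well. Collecting the cases finishes the proof.

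The step I expect to be the main obstacle is the endpoint bookkeeping: verifying in each case that the ``wrong-sign'' term --- $G_2(c_2r^{1-n})F_2(S)$ near $S=0$, or $G_1(c_1r^{1-n})F_1(S)$ near $S=1$ --- is genuinely controlled, either absorbed into the linear Gronwall bound via \eqref{quotlim}, or rendered negligible against a ``right-sign'' term that stays bounded away from $0$ in Case 4, and checking that in each case precisely the hypotheses attached to it (or none, in Case 4) are what make the argument close. The local-existence setup, the uniform bounds on $G_i(c_ir^{1-n})$, and the Gronwall comparison itself are routine.
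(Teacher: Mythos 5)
Your proof is correct and follows essentially the same line as the paper's: argue by contradiction from $R_{\max}<\infty$, use sign analysis to rule out one endpoint in Cases 1 and 2, invoke \eqref{quotlim} together with the one-sided derivative conditions to set up a Gronwall contradiction at the remaining endpoint (and at both in Case 3), and handle Case 4 by pure sign analysis near each endpoint. The only cosmetic difference is that you run Gronwall directly on $S$ (or $1-S$) after using the mean value theorem to extract $F_i(S)\le CS$ (resp.\ $\le C(1-S)$), whereas the paper sets $Y(r)=F_1(S(r))$ (resp.\ $\tilde F_2(1-S)$) and runs the differential inequality on $Y$; the two are interchangeable.
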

\begin{proof} 
Suppose $R_{\max}<\infty$. We consider the following four  cases.  
 
 {\bf Case 1.} $c_2\le 0\le c_1$. We provide the proof of  Case 1a, while Case 1b can be proved similarly. 
We have $F(r,S)<0$ for all $r\in [r_0, \Rmax)$. Thus $S'<0$ for all $r\in [r_0, \Rmax)$. By Theorem~\ref{localODE},  
\beq\label{limS} \lim_{r\to R_{\rm max}^-} S(r)=0.\eeq 
Note that $G_1(c_1r^{1-n})$ and $G_2(c_2r^{1-n})$ are bounded,  and $G_1(c_1r^{1-n})$ is  bounded below by a positive number  on $[r_0,R_{\rm max}]$.
Combining these facts with relation \eqref{quotlim}, we infer that there are $\delta>0$ and $C_1,C_2>0$ 
such that for $r\in [0,R_{\rm max})$ and $S\in (0,\delta)$,
 \beq\label{eqv0}
-C_1F_1(S)\le  F(r,S) \le -C_2 F_1(S).
\eeq 
By \eqref{limS}, there is $r_1\in (0,R_{\rm max})$ such that $S(r)<\delta$ for all $r\in [r_1,R_{\rm \max})$. 
Define $Y(r)=F_1(S(r))$. By \eqref{Sto0}, there are $\tilde r\in (r_1,\Rmax)$  and $C_3>0$
\beq\label{hprime}
 F'_1(S(r))<C_3 \text{ for all } r\in (\tilde r,R_{\rm max}).
\eeq 
For $r\in (\tilde r,\Rmax)$,  using \eqref{eqv0} we have
\beq\label{Yeq1} 
Y'(r) = F'_1(S)S'= F'_1(S)F(r,S)\ge-C F'_1 (S)  F_1(S)>-C_4 F_1(S) = - C_4 Y(r),
\eeq
where $C>0$, $C_4=C C_3>0.$ Thus \eqref{Yeq1} gives 
\beq\label{Yeq}  Y(r)\ge Y(\tilde r) e^{-C_4(r-\tilde r)},\quad r \in [\tilde r,\Rmax).\eeq
We have from \eqref{limS} and \eqref{zeroends} that
\beq\label{limY}
\lim_{r\to \Rmax^-} Y (r)=0.
\eeq 
Let $r\to \Rmax^-$ in \eqref{Yeq} and using \eqref{limY}, we obtain $0\ge Y(\tilde r)e^{-C_4 (\Rmax-\tilde r)}>0$ which is a contradiction. 
       
 {\bf Case 2.} $c_1\le 0\le c_2$. Both Case 2a and  2b are proved similarly. Consider  Case 2a. Since $F(r,S)>0$ for all $r\in [r_0, \Rmax)$,  $S'>0$ for all $r\in [r_0, \Rmax)$ therefore by Theorem~\ref{localODE}, $\lim_{r\to R_{\rm max}^-} S(r)=1$. 
 
Let $X=1-S$. Then
  $  \lim_{r\to R_{\rm max}^-} X(r)=0$    and    
 \beq\label{Xeq}
  X' =  -S'= - F(r, 1-X)=\tilde F(r, X)=G_1(c_1r^{1-n})\tilde F_1(X)- G_2(c_2r^{1-n})\tilde F_2(X), 
 \eeq
 where $ \tilde F_i(X)= F_i(1-X)$. 
Similar to the proof of Case 1a, there are $\delta>0$ and $C_1,C_2>0$ such that    
 \beq\label{equiv1}
-C_1 \tilde F_2(X)\le  \tilde F(r,X) \le -C_2 \tilde F_2(X),
\eeq 
for all $r\in [r_0,\Rmax]$ and $X\in(0,\delta)$.
Note that condition \eqref{Sto1} is equivalent to $\limsup_{X\to 0^+} \tilde F_2'(X)<\infty $. 
Repeating the proof in  Case 1a with $\tilde F_2$ instead of $ F_1$ leads  to a contradiction. 
  
  {\bf Case 3.}  According to Theorem \ref{localODE} we have two cases.

    (i) Case $\lim_{r\to R_{\rm max}^-} S(r)=0$. 
By \eqref{quotlim} there are constants $C_1,C_2>0$ and $\delta>0$  such that 
\beqs
-C_1F_1(S)\le  F(r,S) \le -C_2 F_1(S).
\eeqs 
for all $r\in [r_0,R_{\rm max}]$ and $S\in (0,\delta)$.
Also, there is $r_1\in(0,\Rmax)$ such that $S(r)<\delta$ for all $r\in(r_1,R_{\rm max})$.
Then the exact argument for Case 1a yields a contradiction.
         
    (ii) Case $\lim_{r\to R_{\rm max}^-} S(r)=1$. By \eqref{quotlim}, there $\delta>0$ and $C_1,C_2>0$ such that
     \beqs 
      C_1F_2(S)\le  F(r,S) \le C_2 F_2(S)
   \eeqs 
for all $r\in[r_0,R_{\rm max}]$ and $S\in (1-\delta,1)$.
Then the proof is proceeded similar to  Case 2a under condition \eqref{Sto1} to obtain a contradiction.
     
  {\bf Case 4.} Again, according to Theorem \ref{localODE} we have two cases.

(i) Case $\lim_{r\to R_{\rm max}^-} S(r)=0$. 
By \eqref{quotlim}, there are $\delta>0$ and $C_1,C_2>0$ such that
     \beqs 
       0< C_1F_1(S)\le  F(r,S) \le C_2 F_1(S)
   \eeqs 
for all $r\in[r_0,R_{\rm max}]$ and $S\in (0,\delta)$.
Let $r_1$ be as in {\bf Case 3}(i). Then for $r\in (r_1,R_{\rm max})$ we have $S'(r)>0$, and hence $S(r)\ge S(r_1)>0$ which contradicts the fact $\lim_{r\to R_{\rm max}^-} S(r)=0$.

(ii) Case $\lim_{r\to R_{\rm max}^-} S(r)= 1$. 
By \eqref{quotlim}, there are $\delta>0$ and $C_1,C_2>0$ such that
\beqs 
 -C_1F_2(S)\le F(r,S)\le -C_2F_2(S)<0
\eeqs
for all $r\in[r_0,R_{\rm max}]$ and $S\in (1-\delta,1)$.
There is $r_1\in(r_0,\Rmax)$ such that $S(r)\in (1-\delta,1)$ for all $r\in(r_1,\Rmax)$.
Thus $S'(r)<0$ for all $r\in(r_1, \Rmax)$ which gives $S(r)\le S(r_1)$. Letting $r\to\Rmax$ yields $1\le S(r_1)<1$. This is a contradiction.  

From all the above contradictions, we must have $\Rmax=\infty$ and the proof is complete.   
\end{proof}

To study $S(r)$ as $r\to\infty$, for the solution $S(r)$ in the Theorem \ref{globalODE} we will need   function $h(r)\in (0,1)$ such that
\beq\label{RHS}
 G_2(c_2r^{1-n})F_2(h(r))-G_1(c_1r^{1-n})F_1(h(r))=0.
 \eeq
To prove existence of such function consider $c_1c_2\ne 0$. Then \eqref{RHS} is equivalent to 
\beqs
 \frac{f_1(h(r))}{f_2(h(r))}=\frac{c_1 g_1(|c_1|r^{1-n}) }{c_2g_2(|c_2|r^{1-n})}.
\eeqs 
Since $f\eqdef f_1/f_2$ is strictly increasing and maps $(0,1)$ onto $(0,\infty)$, we can solve 
\beq \label{equil}
h(r)=f^{-1}\Big( \frac{c_1 g_1(|c_1|r^{1-n})}{c_2g_2(|c_2|r^{1-n})}\Big) \quad\text{provided}\quad  c_1 c_2>0.
\eeq
Note that 
\beq\label{sstar2}
\lim_{r\to\infty} h(r)=s^* \eqdef f^{-1}\Big(\frac{c_1 a_{1}^0 }{c_2a_{2}^0 }\Big)\in (0,1).
\eeq
Let $\xi(r)=r^{1-n}\in (0,\infty)$. We rewrite $h(r)$ as 
\beq\label{hQ}
h(r)=f^{-1}\Big(Q(\xi(r)) \Big)\quad  \text{ where } \quad Q(\xi) = \frac {c_1g_1(|c_1|\xi)}{c_2g_2(|c_2|\xi)}\quad\text{for }\xi>0.
\eeq 


\begin{theorem}\label{genS}
If solution $S(r)$ of \eqref{IVP} exists in $[r_0,\infty)$, then there exists $R>r_0$ such that solution  $S(r)$ is monotone on $(R,\infty)$, and, consequently, $\lim_{r\to\infty} S(r)$ exists.
\end{theorem}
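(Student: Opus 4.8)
The plan is to show that $S'(r)$ eventually has a constant sign on some interval $(R,\infty)$. The key observation is that $S'(r) = F(r,S(r))$, and from \eqref{RHS} the function $h(r)$ is precisely the value at which the right-hand side $F(r,\cdot)$ vanishes (when $c_1c_2>0$). So morally $S$ is pushed toward $h(r)$, and since $h(r)\to s^*\in(0,1)$ by \eqref{sstar2}, one expects $S$ to settle into monotone behavior. I would split the argument according to the signs of $c_1,c_2$, mirroring the case analysis in Theorem \ref{globalODE}, since the sign structure of $F(r,S)$ is what drives everything.

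First, in the degenerate cases $c_2\le 0<c_1$ (or $c_1=0>c_2$), we already saw in the proof of Theorem \ref{globalODE}, Case 1, that $F(r,S)<0$ for \emph{all} $r\ge r_0$ and all $S\in(0,1)$; hence $S'<0$ on all of $[r_0,\infty)$ and $S$ is monotone decreasing — nothing more to prove, with $R=r_0$. Symmetrically, in the cases $c_1\le 0<c_2$ and $c_2=0>c_1$ we have $F(r,S)>0$ everywhere, so $S$ is increasing. The case $c_1,c_2<0$: here from \eqref{quotlim} and boundedness of the $G_i(c_ir^{1-n})$ on compact $r$-intervals one checks $F(r,S)>0$ for $S$ near $0$ and $F(r,S)<0$ for $S$ near $1$, but the sign may not be constant on $[r_0,\infty)$ because $r$ ranges over an unbounded set; however as $r\to\infty$ we have $c_ir^{1-n}\to 0$, so $G_i(c_ir^{1-n})\to 0$ at comparable rates, and $F(r,S)/\big(r^{1-n}\big)$ converges to $a_2^0 c_2 F_2(S) - a_1^0 c_1 F_1(S)$, which has no zero in $(0,1)$ when $c_1,c_2<0$ (both terms have the same sign); one then argues the limit is bounded away from zero on any $[\delta,1-\delta]$ and handles the ends using \eqref{quotlim}, yielding a constant sign for large $r$.

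The genuinely interesting case is $c_1,c_2>0$, where $h(r)$ is defined by \eqref{hQ}. Here I would argue as follows. Suppose $S'$ changes sign infinitely often as $r\to\infty$; then there is a sequence $r_k\to\infty$ of local extrema of $S$, at which $S'(r_k)=0$, i.e. $F(r_k,S(r_k))=0$, which forces $S(r_k)=h(r_k)$. Since $h(r)\to s^*$, along this sequence $S(r_k)\to s^*$. Now I want to show $S(r)\to s^*$ as $r\to\infty$ outright, which contradicts $S$ oscillating with extrema all at the level $h(r_k)\to s^*$ unless $S$ is eventually trapped at $s^*$ — more precisely, I would show that between consecutive extrema $S$ cannot stray far: if $S(r)>h(r)$ on an interval then $F(r,S)<0$ there (using monotonicity of $f=f_1/f_2$ and the defining relation for $h$), so $S$ is decreasing toward $h(r)$, and symmetrically if $S(r)<h(r)$ then $S$ is increasing. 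Combined with $h(r)\to s^*$ and a bound on $|h'(r)|$ (which comes from differentiating \eqref{hQ}; $Q$ is $C^1$ and $\xi(r)=r^{1-n}$ has derivative tending to $0$), a standard comparison/trapping argument shows $S(r)\to s^*$. But then, for $r$ large, $S(r)$ lies on one definite side of $h(r)$, or equals it; if $S(r)>h(r)$ for all large $r$ then $S'<0$ eventually (monotone), similarly if $S(r)<h(r)$; and if $S(r)=h(r)$ on a set accumulating at $\infty$ one uses the ODE to conclude $S\equiv h$ is impossible unless $h'(r)=F(r,h(r))=0$, which only happens in the limit — so in fact $S$ is eventually monotone. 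Once monotonicity on $(R,\infty)$ is established, boundedness of $S$ in $[0,1]$ gives the existence of $\lim_{r\to\infty}S(r)$ by the monotone convergence theorem.

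The main obstacle is the $c_1,c_2>0$ case: making rigorous the claim that $S$ cannot oscillate indefinitely around the moving target $h(r)$. The delicate point is that $h$ is not constant, so one cannot simply say "$S$ monotone between crossings of a fixed level"; one must control the drift of $h$ relative to the speed at which $S$ is driven toward it. I expect this to require quantifying, via \eqref{quotlim} and the explicit form \eqref{hQ}, a lower bound of the form $|F(r,S)|\ge c\,|S - h(r)|$ for $r$ large and $S$ in a neighborhood of $s^*$ — together with $h'(r)\to 0$ — to run a Gronwall-type estimate on $S(r)-h(r)$ and conclude it tends to $0$, after which eventual monotonicity follows from the sign of $F$ on either side of $h$.
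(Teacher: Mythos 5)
Your proposal has a genuine error in the case $c_1,c_2<0$ and leaves the central case $c_1,c_2>0$ unfinished, whereas the paper resolves both uniformly by a shorter comparison argument.

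On $c_1,c_2<0$: you claim that the limit of $F(r,S)/r^{1-n}$, namely $a_2^0c_2F_2(S)-a_1^0c_1F_1(S)$, has no zero in $(0,1)$ because ``both terms have the same sign.'' This is incorrect. With $c_2<0$ the first summand $a_2^0c_2F_2(S)$ is negative and with $c_1<0$ the second summand $-a_1^0c_1F_1(S)$ is positive, so the two terms have \emph{opposite} signs and the limit function vanishes exactly at $s^*=f^{-1}(c_1a_1^0/(c_2a_2^0))$. Indeed $h(r)$ in \eqref{hQ} is defined whenever $c_1c_2>0$, which includes $c_1,c_2<0$, and $F(r,\cdot)$ changes sign at $h(r)$. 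Your own observation that $F(r,S)>0$ for $S$ near $0$ and $F(r,S)<0$ for $S$ near $1$ already forces a sign change by continuity, so $F$ cannot have constant sign on $(0,1)$. The case $c_1,c_2<0$ must be handled by the same mechanism as $c_1,c_2>0$, not by a sign-constancy argument.

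On $c_1,c_2>0$: you identify the difficulty correctly --- the target $h(r)$ is moving --- but then propose a Gronwall estimate of the form $|F(r,S)|\ge c|S-h(r)|$ with $h'(r)\to 0$, which you acknowledge you have not carried out and which would be delicate (one needs uniform lower bounds on $F'_i$ near $s^*$ and control of $h'$ relative to the decaying drift $r^{1-n}$). The paper avoids all of this. It observes that the quotient $Q(\xi)$ in \eqref{hQ} is a ratio of generalized polynomials in $\xi=r^{1-n}$, so near $\xi=0$ its derivative has a definite sign (that of the leading coefficient $A_1$, unless $Q'\equiv 0$, in which case $h\equiv s^*$ is a stationary level and the result is immediate). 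Thus $h$ is \emph{eventually monotone}, and then a purely order-theoretic comparison suffices: if $S$ ever crosses $h$, it is trapped on one side of the monotone $h$ thereafter. For instance, when $h$ is increasing and $F<0\iff S>h$, one shows $S(r)\le h(r)$ for all large $r$ by contradiction --- a crossing at some $R_2$ would require $S$ to have decreased from $h(\tilde r)$ to $S(R_2)>h(R_2)\ge h(\tilde r)$, which is impossible. Once $S$ stays on one side of $h$, $S'$ has constant sign. This sidesteps any estimate of the speed of attraction, and it does not require proving $S\to s^*$ first (which is in fact not always true: $s_\infty$ can be $0$ or $1$ when $c_1,c_2>0$, cf.\ Theorem \ref{special2}(iv)).

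In short: replace your analytic trapping argument with the structural one --- compute $Q'(\xi)$, conclude $h$ is eventually monotone (or constant), then run the comparison between $S$ and the monotone curve $h$. This also fixes the $c_1,c_2<0$ case automatically since it is exactly parallel to $c_1,c_2>0$ with the sign of $F$ relative to $h$ reversed.
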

\begin{proof} 
If $c_1c_2\le 0$ then  all $r\ge r_0$ either $S'\ge 0$  or $S'\le 0$. Thus $S(r)$ is monotone on $[r_0,\infty)$. 

Consider the case $c_1c_2>0$. Then $h(r)$ in \eqref{hQ} exists. We rewrite $Q(\xi)$ as
\beq
Q(\xi)=\frac{c_1}{c_2} \cdot \frac{\sum_{i=0}^{m_1} a_i \xi^{\alpha_i} }{\sum_{i=0}^{m_2} b_i \xi^{\beta_i} }.
\eeq
where $a_i, b_i>0$, $0=\alpha_0<\alpha_1<\cdots< \alpha_{m_1}$, $0=\beta_0<\beta_1<\cdots< \beta_{m_2}$  .  

If $Q'\equiv 0$ then $h(r)\equiv s^*$ is an equilibrium. It is easy to see that if $s_0>(<)s^*$ then $S(r)>(<)s^*$ for all $r$, hence $S(r)$ is monotone on $r\in [r_0,\infty)$.

Now we consider $Q' \neq 0$. A simple calculation gives 
\begin{align*}
Q'(\xi)  
=\frac {c_1}{\xi c_2} (\sum_{i=0}^{m_2} b_i\xi^{\beta_i})^{-2} \sum_{i=1}^{m_3} A_i \xi^{\gamma_i},
\end{align*} 
where $m_3\ge 1$, $A_i\neq 0,  0<\gamma_1<\gamma_2<\cdots <\gamma_{m_3}  $.
Note that $Q'(\xi)$ has the same sign as $A_1$ for $\xi>0$ sufficiently small. Combining this with the fact $f'>0$, we have that $A_1 h'(r)<0$ for all $r>R$, where $R>0$ is a sufficiently large number. 

\textbf{Claim 1.} There is $\tilde R>R$ such that $S'(r)\ge 0$ on $(\tilde R,\infty)$ or $S'(r)\le 0$ on $(\tilde R,\infty)$.

Then the theorem's statements obviously follow Claim 1. 

To prove Claim 1 we consider the following cases.

\noindent {\bf Case 1:} $A_1<0$. Then $h(r)$ is increasing in $[R,\infty)$ and, hence, $h(r)<s^*$ for all $r\ge R$. 

Case 1A: $S(r)\ge h(r)$ for all $r>R$. Then $S'\ge 0$ for all $r>R$ or $S'\le 0$ for all $r>R$. 

Case 1B: There exists $R_1>R$ such that $S(R_1)<h(R_1)$.
     
+ Case 1B(i):  $F(r,S)>0 \Leftrightarrow S>h(r)$. Then $S'>0$ if $S(r)>h(r)$ and $S'<0$ if $S(r)<h(r)$. 
It is easy to see that $S(r)<h(R_1)\le h(r)$ for all $r>R_1$. 
Therefore $S'(r)<0$ for all $r>R_1$.
        
+ Case 1B(ii): $F(r,S)<0 \Leftrightarrow S>h(r)$. Then $S'<0$ if $S(r)>h(r)$ and $S'>0$ if $S(r)<h(r)$. 

\textbf{Claim 2.} $S(r)\le h(r)$ for all $r\ge R_1$ and hence $S'(r)\ge 0$ for all $r>R_1$.

Suppose Claim 2 is false. Then there is $R_2>R_1$ such that $ S(R_2)> h(R_2)$. There is $\tilde r\in (R_1 ,R_2)$ such that $S(\tilde r)=h(\tilde r)$. Hence, $S$ is decreasing on $(\tilde r , R_2)$, $S(R_2)\le S(\tilde r)=h(\tilde r)\le h(R_2)$. This is a contradiction.

\noindent{\bf Case 2:} $A_1>0$. Then $h(r)$ is decreasing in $[R,\infty)$ and $h(r)>s^*$ for all $r\ge R$  . 

Case 2A: $S(r)\le h(r)$ for all $r>R$. Then $S'\le 0$ for all $r>R$ or $S'\ge 0$ for all $r>R$.  

Case 2B: There exists $R_1>R$ such that $S(R_1)>h(R_1)$.
  
+ Case 2B(i):  $F(r,S)>0 \Leftrightarrow S>h(r)$.  Then $S'>0$ if $S(r)>h(r)$ and $S'<0$ if $S(r)<h(r)$. 
Similar to Case 1B(i), $h(r)<h(R_1)< S(r)$ for all $r>R_1$. 
Therefore $S'(r)>0$ for all $r>R_1$.     
          
+ Case 2B(ii):  $F(r,S)<0 \Leftrightarrow S>h(r)$.  Then $S'<0$ if $S(r)>h(r)$ and $S'>0$ if $S(r)<h(r)$. Similar to Case 1B(ii),  $S(r)\ge h(r)$ for all $r\ge R_1$. Therefore $S'(r)\le 0$ for all $r>R_1$.
 
From the above considerations, we see that Claim 1 holds true and the proof is complete. 
\end{proof}

Let $s_\infty=\lim_{r\to\infty} S(r)$ in Theorem \ref{genS}. Note that $s_\infty\in [0,1]$.

\begin{lemma}\label{special}
 For $n=2$ and $c_1^2+c_2^2 >0$, if $s_\infty$ is neither $0$ nor $1$ then $s_\infty$ must be $s^*$.
\end{lemma}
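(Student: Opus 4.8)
The plan is to analyze the ODE \eqref{Sprime} near $r=\infty$ using the fact that the forcing coefficients $G_i(c_ir^{1-n})$ decay to their leading terms, so the equation becomes asymptotically autonomous. Since $n=2$, we have $\xi(r)=r^{1-n}=1/r\to 0$ as $r\to\infty$, and $c_1^2+c_2^2>0$. Suppose for contradiction that $s_\infty\in(0,1)$ but $s_\infty\ne s^*$. I would split into the cases $c_1c_2>0$, $c_1c_2=0$, and $c_1c_2<0$, the first being the essential one (the function $h(r)$ and the value $s^*$ are only defined when $c_1c_2>0$; for the other cases one shows $s_\infty$ cannot lie strictly between $0$ and $1$ at all, which would already contradict the hypothesis, or one interprets $s^*$ appropriately as the relevant equilibrium).

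For the main case $c_1c_2>0$: recall from \eqref{hQ} and \eqref{sstar2} that $h(r)=f^{-1}(Q(1/r))\to s^*$, and that $F(r,S)$ has the same sign as $S-h(r)$ or the opposite sign (depending on a fixed sign determined by $c_1,c_2$), since $F(r,S)=0 \Leftrightarrow S=h(r)$ and $\partial_S F$ has a definite sign near a nondegenerate root. By Theorem \ref{genS}, $S(r)$ is eventually monotone, say on $(R,\infty)$, with limit $s_\infty$. If $s_\infty>s^*$, then for large $r$ we have $S(r)>h(r)$ (since $h(r)\to s^*<s_\infty$ and $S(r)\to s_\infty$), so $F(r,S(r))$ has one fixed sign, forcing $S'$ to have that fixed sign; I would then show this sign is incompatible with $S(r)$ decreasing toward $s_\infty$ from above — more precisely, if $S$ is eventually monotone with a finite limit $s_\infty$, then $S'(r)\to 0$ along a subsequence (in fact $\liminf|S'|=0$), but $|F(r,S(r))|$ stays bounded below by a positive constant because $S(r)-h(r)\to s_\infty-s^*\ne 0$ and the coefficients $G_i(c_ir^{1-n})$ stay bounded below by positive constants on $[R,\infty)$ (they converge to $c_ia_{i,0}\ne 0$). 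This contradiction shows $s_\infty\le s^*$; symmetrically $s_\infty\ge s^*$, hence $s_\infty=s^*$.

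The key quantitative input is that $F(r,S(r)) \to F_\infty(s_\infty) := G_2(c_2\cdot 0^+)F_2(s_\infty)-G_1(c_1\cdot 0^+)F_1(s_\infty)$ is not well-defined naively since $G_i(0)=0$; the correct statement is that one must use $\xi=1/r$ and the leading-order behavior $g_i(|c_i|\xi)\to a_{i,0}>0$, giving $G_i(c_i\xi)=c_i\xi g_i(|c_i|\xi)\to 0$ — so in fact \emph{both} terms of $F$ vanish as $r\to\infty$. This is the main obstacle: the naive "autonomous limit" argument fails because $F(r,\cdot)\to 0$ uniformly on compact $S$-intervals. The fix is to rescale: divide \eqref{Sprime} by $\xi=r^{1-n}$, i.e. study $dS/d\xi$ or equivalently note $S'(r)\,r = r\cdot F(r,S) = \xi^{-1}F$, wait — better, observe that $F(r,S) = \xi\big[c_2 g_2(|c_2|\xi)F_2(S) - c_1g_1(|c_1|\xi)F_1(S)\big]$, so $F(r,S)/\xi \to c_2a_{2,0}F_2(S)-c_1a_{1,0}F_1(S)=:F_*(S)$, and $F_*(S)=0 \Leftrightarrow f_1(S)/f_2(S) = c_1a_{1,0}/(c_2a_{2,0}) \Leftrightarrow S=s^*$. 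Then $\int_R^\infty \xi\,dr = \int_R^\infty r^{-1}\,dr = +\infty$ precisely when $n=2$; so if $S(r)\to s_\infty\ne s^*$, then $F_*(S(r))\to F_*(s_\infty)\ne 0$, whence $|S'(r)| = |F(r,S)| \sim |F_*(s_\infty)|\,\xi(r)$, and integrating gives $|S(r)-S(R)|\gtrsim \int_R^r \xi(\rho)\,d\rho\to\infty$, contradicting boundedness of $S$ in $[0,1]$. I would carry out exactly this integration argument — it is clean, uses the monotonicity from Theorem \ref{genS} only to know the limit exists, and pinpoints why $n=2$ (equivalently $\int^\infty r^{1-n}\,dr=\infty$) is exactly the borderline hypothesis.
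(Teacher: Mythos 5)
Your final argument — rescaling by $\xi=r^{-1}$, identifying the nonzero limiting coefficient $F_*(s_\infty)=c_2a_2^0F_2(s_\infty)-c_1a_1^0F_1(s_\infty)$, and integrating $S'=F(r,S(r))\sim F_*(s_\infty)\,r^{-1}$ over $[R,\infty)$ to get a logarithmic divergence of $S$ — is exactly the paper's proof. Your intermediate detour via the "autonomous limit" and $\liminf|S'|=0$ is, as you yourself note, a dead end (since $F(r,S)\to 0$ anyway), but the integration argument you settle on is correct and is the one the authors use.
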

\begin{proof} Assume $s_\infty\ne 0,1$. We prove by contradiction. Suppose $s_\infty\not =s^*$. Then
\beq 
c_3\eqdef |F_2(s_\infty)a_2^0c_2 - F_1(s_\infty)a_1^0c_1| >0.
\eeq 
For any $R>r_0$, We write $S(r) =I_1(R)+I_2(R)$ where 
\beqs
I_1(R)= s_0+\int_{r_0}^R F(z,S(z))dz\quad \text{and}\quad I_2(R)=\int_R^r F(z,S(z))dz.
\eeqs
For sufficiently large $R$ and $r>R$ 
\beqs
|I_2(R)|=\int_R^r F(z,S(z))dz\ge \frac {c_3} 2\int_R^r z^{-1} dz=\frac {c_3} 2 (\ln r-\ln R).
\eeqs
Therefore
\beqs
|S(r)|\ge  \frac {c_3} 2(\ln r-\ln R) - I_1(R) \to \infty \text { as } r\to\infty.
\eeqs
Thus $S(r)$ is unbounded which contradicts the fact $S(r)\in (0,1)$. Hence $s_\infty=s^*$.
\end{proof}

Using Lemma \ref{special} we can drastically reduce the range of $s_\infty$ in case $n=2$.

\begin{theorem}\label{special2}
 Let $n=2$ and $c_1^2+c_2^2 >0$. Suppose $S(r)$ is a solution of \eqref{IVP} on $[r_0,\infty)$.

{\rm (i)} If $c_1\le 0$ and $c_2\ge 0$ then $s_\infty=1$.

{\rm (ii)} If $c_1\ge 0$ and $c_2\le 0$ then $s_\infty=0$.

{\rm (iii)} If $c_1,c_2<0$ then $s_\infty=s^*$.

{\rm (iv)} If $c_1,c_2>0$ then $s_\infty\in \{0,1,s^*\}$.
\end{theorem}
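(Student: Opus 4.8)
\textit{Plan.} The idea is to handle the four cases separately and, in each, to reduce the determination of $s_\infty$ to Lemma \ref{special} once it is known which of the values $0$ and $1$ can actually occur. Since $n=2$ we have $r^{1-n}=r^{-1}$, and multiplying the right-hand side of \eqref{IVP} by $r$ gives the identity
\[
rF(r,S)=c_2\,g_2(|c_2|r^{-1})\,F_2(S)-c_1\,g_1(|c_1|r^{-1})\,F_1(S),\qquad r>0,\ S\in(0,1),
\]
which I will use repeatedly. Its two coefficients are controlled uniformly on $[r_0,\infty)$: since $g_i$ is nondecreasing with $g_i(0)=a_i^0$, one has $a_i^0\le g_i(|c_i|r^{-1})\le g_i(|c_i|r_0^{-1})$ for all $r\ge r_0$. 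Also $G_i(c_ir^{-1})$ has the sign of $c_i$ while $F_i>0$ on $(0,1)$, and by Theorem \ref{genS} the limit $s_\infty=\lim_{r\to\infty}S(r)\in[0,1]$ exists.

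\textit{Cases (i) and (ii).} Here $c_1c_2\le 0$ and the sign information makes $F(r,S)$ have a fixed sign on $[r_0,\infty)\times(0,1)$: in (i) it is $\ge 0$, so $S$ is nondecreasing and $s_\infty\in[s_0,1]$; in (ii) it is $\le 0$, so $S$ is nonincreasing and $s_\infty\in[0,s_0]$. This already excludes $s_\infty=0$ in (i) and $s_\infty=1$ in (ii). To exclude $s_\infty\in(0,1)$ in case (i): if that held, then by the displayed identity and continuity of $g_i,F_i$ we would get $rF(r,S(r))\to c_3:=c_2a_2^0F_2(s_\infty)-c_1a_1^0F_1(s_\infty)$, and $c_3>0$ since $c_1\le 0\le c_2$ are not both zero and $F_i(s_\infty),a_i^0>0$; hence $F(r,S(r))\ge c_3/(2r)$ for large $r$, so $S(r)\ge S(R)+\tfrac{c_3}{2}\ln(r/R)\to\infty$, contradicting $S<1$ (this is exactly the estimate of Lemma \ref{special}). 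Thus $s_\infty=1$. Case (ii) is the mirror image: the limit $c_3$ is now strictly negative, the integral of $c_3/(2r)$ drives $S$ below $0$, and $s_\infty=0$.

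\textit{Cases (iii) and (iv).} Now $c_1c_2>0$, so $s^*$ of \eqref{sstar2} is defined and Lemma \ref{special} gives $s_\infty\in\{0,1,s^*\}$; this is already the assertion of (iv). For (iii) it remains to rule out $s_\infty=0$ and $s_\infty=1$. Near $S=0$, using $c_1,c_2<0$ the identity yields
\[
rF(r,S)\ge -|c_2|\,g_2(|c_2|r_0^{-1})\,F_2(S)+|c_1|\,a_1^0\,F_1(S),
\]
and since $F_1(S)/F_2(S)\to\infty$ as $S\to0^+$ by \eqref{quotlim}, there is $\delta_0\in(0,1)$ with $F(r,S)>0$ for all $r\ge r_0$ and all $S\in(0,\delta_0)$. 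Hence $S'(r)>0$ whenever $S(r)\in(0,\delta_0)$; but if $s_\infty=0$ then $S(r)<\delta_0$ for all large $r$, forcing $S$ to be increasing there and hence bounded below away from $0$ — a contradiction. The exclusion of $s_\infty=1$ is identical on $X=1-S$, which solves $X'=G_1(c_1r^{-1})\tilde F_1(X)-G_2(c_2r^{-1})\tilde F_2(X)$ with $\tilde F_i(X)=F_i(1-X)$, now using $F_2(S)/F_1(S)\to\infty$ as $S\to1^-$. With $0$ and $1$ excluded, Lemma \ref{special} forces $s_\infty=s^*$.

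I expect the only genuine difficulty to be the two endpoint barrier arguments in (iii). The obstruction is that $G_i(c_ir^{-1})\to0$ as $r\to\infty$, so neither vanishing term in $F(r,S)=G_2(c_2r^{-1})F_2(S)-G_1(c_1r^{-1})F_1(S)$ can be bounded below by a positive constant near $r=\infty$, and the sign of $F$ near an endpoint is the outcome of a competition between the two. The remedy is the factorization above: after pulling out the common factor $r^{-1}$, the residual coefficients $g_i(|c_i|r^{-1})$ are trapped between positive constants, and \eqref{quotlim} supplies the strict domination of one $F_i$ over the other. This is precisely where $n=2$ is used — for $n\ge 3$ one has $\int^\infty r^{1-n}\,dr<\infty$, so the divergence mechanism behind both this proof and Lemma \ref{special} breaks down.
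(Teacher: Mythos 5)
Your proof is correct and follows essentially the same route as the paper: appeal to Lemma \ref{special}, then use the sign of $F(r,S)$ to rule out the endpoint limits $0$ and $1$ where needed; in case (iii) you give a self-contained barrier argument via \eqref{quotlim} in place of the paper's appeal to the sign of $F(r,S)-F(r,h(r))$, which is a mild expansion of what the paper marks as ``easy to see,'' not a different method. One small inaccuracy in your closing remark: $n=2$ is not actually needed for the endpoint barrier argument in (iii) (the factorization by $r^{1-n}$ works for any $n$, and indeed the same mechanism appears in Case 4 of Theorem \ref{globalODE}); $n=2$ enters only through Lemma \ref{special}, i.e.\ the logarithmic divergence that pins $s_\infty$ to $\{0,1,s^*\}$.
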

\begin{proof}
(i) In this case, $S'(r)>0$ for all r, hence $S(r)>s_0$. This implies $s_\infty\ne 0$. In addition, $s^*$ does not exist. Therefore, by Lemma \ref{special}, $s_\infty$ must be $1$.

(ii) The proof is similar to that of (i).

(iii) We have $F(r,S)<0$ for $S<h(r)$ and $F(r,S)>0$ for $S<h(r)$. Thus, it is easy to see that $s_\infty$ cannot be $0,1$. By Lemma \ref{special}, $s_\infty$ must be $s^*$.

(iv) This is a direct consequence of Lemma \ref{special}.
\end{proof} 

In general, we do not know the value of $s_\infty$ based on $s_0$.
However, in some particular cases, we can determine the range of $s_\infty$.

\begin{example}\label{exam}
We consider the following special $g_i$'s: 
\beq \label{2term}
g_i(u) =a_i+b_i u^\alpha\quad \text{ where } a_i>0,\ b_i>0,\text{ for } i=1,2 \text { and } \alpha>0.
\eeq 
 We have from \eqref{hQ} when $c_1c_2>0$ that  
 \beqs
 Q'(\xi) = \frac {c_1\Delta} { c_2(a_2+b_2|c_2|^\alpha \xi )^2 }
 \text{ with } \Delta  = a_2b_1|c_1|^\alpha - a_1b_2 |c_2|^\alpha. 
 \eeqs

We now detail the range of $s_\infty$ case by case.

\noindent\textbf{Case $n> 2$.}
\begin{itemize}
 \item[A.] $c_1,c_2>0$. 

\begin{itemize}
 \item[A1.] $\Delta<0$. 

(i) $s_0>s^*$. Then $s_\infty\in (s_0,1]$.

(ii) $h(r_0)\le s_0 \le s^*$. Then $s_\infty\in [0,1]$.

(iii) $s_0<h(r_0)$. Then $s_\infty \in [0,s_0)$.

\item[A2.] $\Delta>0$. 

(i) $s_0>h(r_0)$. Then $s_\infty\in (s_0,1]$.

(ii) $s^* \le s_0 \le h(r_0)$. Then $s_\infty\in [0,1]$.

(iii) $s_0<s^*$. Then  $s_\infty \in [0,s_0)$.

\item[A3.] $\Delta=0$. 

(i) $s_0>s^*$. Then $s_\infty\in(s_0,1]$.

(ii) $s_0=s^*$. Then $s_\infty=s^*$.

(iii) $s_0<s^*$. Then  $s_\infty \in [0,s_0)$.

\end{itemize}

\item[B.] $c_1,c_2<0$. 
\begin{itemize}
 \item[B1.] $\Delta<0$. 

(i) $s_0>s^*$. Then $s_\infty\in(h(r_0) ,s_0)$.

(ii) $h(r_0)\le s_0 \le s^*$. Then $s_\infty\in (h(r_0),s^*]$.

(iii) $s_0<h(r_0)$. Then $s_\infty \in (s_0,s^*]$.

 \item[B2.] $\Delta>0$. 

(i) $s_0>h(r_0)$. Then $s_\infty\in [s^*,s_0)$.

(ii) $s^* \le  s_0 \le h(r_0)$. Then $s_\infty\in [s^*,h(r_0))$.

(iii) $s_0<s^*$. Then  $s_\infty \in (s_0,h(r_0))$. 

\item[B3.] $\Delta=0$. 

(i) $s_0>s^*$. Then $s_\infty\in [s^*,s_0)$.

(ii) $s_0=s^*$. Then $s_\infty=s^*$.

(iii) $s_0<s^*$. Then  $s_\infty \in (s_0,s^*]$.

\end{itemize}

\item[C.] $c_1\le 0<c_2 $ or $c_1<0=c_2$. Then $s_\infty\in(s_0,1]$.

\item[D.] $c_2\le 0<c_1$  or $c_1=0> c_2$. Then $s_\infty\in[0,s_0)$.
\end{itemize}

Verifications of the cases above are presented in the Appendix.

\noindent\textbf{Case $n=2$.}  We use the analysis in A, which is still valid for $n=2$, to explicate the case $c_1,c_2>0$ in Theorem \ref{special2}. 
Let $s_m=\min\{h(r_0),s^*\}$ and $s_M=\max\{h(r_0),s^*\}$.
\begin{itemize}
 \item[(i)] $s_0>s_M$. Then $s_\infty=1$.

 \item[(ii)] $s_m\le s_0\le s_M$. Then $s_\infty\in \{0,1,s^*\}$.

 \item[(iii)] $s_0<s_m$. Then  $s_\infty=0$.
\end{itemize}

\end{example}

\myclearpage

 \section{Linearization}\label{linearization}
 
We study the linear stability of a steady state solution $(\U_1^*(\x),\U_2^*(\x), S_*(\x))$ of system \eqref{mainsys}.
The formal linearizion of system \eqref{mainsys} at  $(\U_1^*(\x),\U_2^*(\x), S_*(\x))$ is 
\begin{subequations}\label{linsys}
 \begin{align}
\label{s1}\sigma_t &=- {\rm div} \  {\mathbf v}_1 ,\\
\label{s2}\sigma_t &= {\rm div} \  {\mathbf v}_2,\\
\label{s3}\nabla \sigma &=F_2(S_*) \G'_2({\mathbf u}_2^*){\mathbf v}_2+ F'_2(S_*)\sigma \G_2({\mathbf u}_2^*) -\Big(F_1(S_*) \G'_1({\mathbf u}_1^*){\mathbf v}_1+F'_1(S_*)\sigma\G_1({\mathbf u}_1^*)\Big).
\end{align}
\end{subequations}

Above, the unknowns are $\sigma(\x,t)\in\R$, $\V_1(\x,t),\V_2(\x,t)\in\R^n$.
A solution $(\sigma,\V_1,\V_2)$ of \eqref{linsys} is considered as an approximation of the difference between a solution $(S(\x,t),\U_1(\x,t),\U_2(\x,t))$ of \eqref{mainsys} and the steady state $(\U_1^*(\x),\U_2^*(\x), S_*(\x))$ in \eqref{Sradial}. The system \eqref{linsys} is obtained by utilizing Taylor expansions in \eqref{mainsys} at $(\U_1^*,\U_2^*, S_*)$ with respect to variables $\U_1,\U_2, S$ and then neglecting non-linear  terms. In theory of ordinary differential equations, linearizion has direct connections with the stability of steady states. In PDE theory, this is not always the case. Nonetheless, in many scenarios, stability of the linearized equations lead to the stability of the original ones. In this article we only focus on the stability for the linearized system \eqref{linsys}.

We consider, particularly, the steady states obtained in the previous section, that is,
\beq \label{Sradial}
\U_1^*(\x) =c_1 |\x|^{-n}\x,\quad \U_2^*(\x)=c_2 |\x|^{-n}\x,\quad S_*(\x)=\hat S(|\x|),
\eeq
where $c_1,c_2$ are constants and $\hat S(r)$ is a solution of \eqref{IVP}.

Let ${\mathbf v} ={\mathbf v}_1+ {\mathbf v}_2.$ Adding equation \eqref{s1} to \eqref{s2} gives 
\beq\label{divv}
{\rm div} \ {\mathbf v} =0.
\eeq
Assume $ {\mathbf v} = \mathbf V(\x,t)\in \mathbb R^n$, where $\mathbf V(\x,t)$ is a given function.   
We have 
\beq\label{v1} 
\mathbf{v}_1 = \mathbf V-\mathbf{v}_2,
\eeq 
hence \eqref{s3} provides   
\beq\label{s31}
\nabla \sigma = \sigma\mathbf b + \Bm\mathbf v_2 - \mathbf c,
\eeq
where 
\begin{align}
\label{mtxB}
\Bm&= \Bm(\x)= F_2(S_*)\G'_2({\mathbf u}_2^*) + F_1(S_*)\G'_1({\mathbf u}_1^*),\\
\label{bdef}
\mathbf b &= \mathbf b(\x)= F'_2(S_*)\G_2({\mathbf u}_2^*)-F'_1(S_*)\G_1({\mathbf u}_1^*),\\ 
\label{cdef}
\mathbf c&=\mathbf c(\x,t)= F_1(S_*)\G'_1({\mathbf u}_1^*) \mathbf V(\x,t)  .
\end{align}
The $n\times n$ matrix $\Bm$ is invertible (see Lemma \ref{Bsym} below), and we denote its inverse by 
\beq \Am=\Am(\x)=\Bm^{-1}(\x).
\eeq
Solving for $\V_2$ from \eqref{s31}  we obtain 
\beq\label{v2}
 \mathbf {v}_2= \Am(\nabla \sigma - \sigma\mathbf b) + \Am \mathbf c.
\eeq  
Substituting \eqref{v2} into \eqref{s2} gives     
\beq
\begin{aligned}\label{sigt}
\sigma_t& = \nabla\cdot\Big[ \Am(\nabla \sigma -\sigma \mathbf b )\Big] + \nabla\cdot(\Am \mathbf c).
\end{aligned}
\eeq 
Then \eqref{sigt}, \eqref{v1} and \eqref{v2} is our linearized system for \eqref{mainsys}
at the steady state $(\U_1^*(\x),\U_2^*(\x), S_*(\x))$.

   
\begin{remark}\label{pertr}
In our approach, the total velocity $\bf V$ and hence the vector function $\bf c$ are supposed to be known, whereas the phase velocities $\mathbf v_i$ ($i=1,2$) are the unknowns.
Therefore, our results below can be considered as the qualitative study of the flow depending on the property of the total velocity.
Such restriction, however, is justified in practice or in case $\bf V$, as a perturbation, itself is radial.
In the latter consideration, by \eqref{divv}, $\mathbf V=\mathbf V(t)$ is totally determined by its boundary values.
\end{remark}  

We will focus on studying classical solutions of \eqref{sigt}. For such purpose, the maximum principle plays an important role. 
Although there is not an obvious maximum principle for \eqref{sigt},
we can convert it to an equation for which there is one. We proceed as follows.
Rewrite vector function $\mathbf b(\x)$ explicitly as 
\beq\label{vecb}
\mathbf b(\x) = \Big(F'_2(S_*(\x))g_2(\frac {|c_2|}{|\x|^{n-1}})\frac {c_2}{|\x|^n} -F'_1(S_*(\x))g_1(\frac {|c_1|}{|\x|^{n-1}})\frac {c_1}{|\x|^n} \Big)\x
                = \lambda(|\x|) \x,
\eeq
where 
\beq\label{lamr}
\lambda(r) =F'_2(\hat S(r))g_2(\frac {|c_2|}{r^{n-1}})\frac {c_2}{r^n} -F'_1(\hat S(r))g_1(\frac {|c_1|}{r^{n-1}})\frac {c_1}{r^n}. 
\eeq
By defining 
\beq\label{Lamddef}
\Lambda(\x) = \frac 12 \int_{r_0^2}^{|\x|^2} \lambda(\sqrt{\xi} )d\xi = \int_{r_0}^{|\x|} r\lambda(r)dr ,
\eeq 
we have for $\x\ne 0$ that
\beq\label{b&lamb}
\mathbf b(\x)=\nabla \Lambda(\x) . 
\eeq
Substituting this relation into \eqref{sigt} we obtain 
\begin{align*}
\sigma_t &= \nabla\cdot\Big[ \Am(\nabla \sigma - \sigma \nabla \Lambda  )\Big] + \nabla\cdot(\Am \mathbf c)
= \nabla\cdot\Big[  e^\Lambda \Am\nabla( e^{-\Lambda}\sigma)\Big]+ \nabla\cdot(\Am \mathbf c)\\
&=  e^\Lambda  \nabla\cdot\Big[ \Am\nabla( e^{-\Lambda}\sigma)\Big]+ e^\Lambda  \nabla\Lambda \cdot \Big[  \Am\nabla( e^{-\Lambda}\sigma)\Big]
+ \nabla\cdot(\Am \mathbf c).
\end{align*}
Let
\beq\label{w-sig}
w(\x,t)=e^{-\Lambda(\x)}\sigma(\x,t).  
\eeq
Then $w$ satisfies 
\beq
w_t=e^{-\Lambda}\sigma_t = \nabla\cdot\Big(  \Am\nabla w\Big) +  \nabla\Lambda\cdot \Am\nabla w   + e^{-\Lambda}\nabla\cdot(\Am \mathbf c).
\eeq
Using relation \eqref{b&lamb} again yields
\beq\label{neweq}
w_t- \nabla\cdot ( \Am\nabla w ) -   \mathbf{b} \cdot \Am \nabla w  =  e^{-\Lambda}\nabla\cdot(\Am \mathbf c).
\eeq 

For the velocities, we have from \eqref{v2} and \eqref{w-sig} that 
 \begin{align*} 
  \V_2 = \Am\big[\nabla (e^{\Lambda} w) -e^{\Lambda}w{\bf b}\big] + \Am{\bf c}
  = \Am\big[ e^{\Lambda} \nabla w +  w e^{\Lambda}\nabla \Lambda  -e^{\Lambda}w{\bf b}\big] + \Am{\bf c}.
 \end{align*}
Thus, 
 \beq\label{v2w} 
  \V_2= e^{\Lambda}\Am\nabla w +\Am{\bf c} .
 \eeq

We will proceed by studying \eqref{neweq} first and then drawing conclusions for $\sigma, \V_1, \V_2$ via the relations \eqref{w-sig}, \eqref{v2w} and \eqref{v1}.


In the following, we present some properties of $\Bm$, $\Am$ and $\bf  b$.
They have some structures and estimates which are crucial for our next sections.
These are based on the special form of the steady state $(\U^*_1,\U^*_2,S_*)$.

Denote by $\Idn$ the $n\times n$ identity matrix.
Consider $c_1^2+c_2^2>0$ and $\x\ne 0$.
We have for  $i=1,2$ that 
\beq\label{primeGi}
\G'_i(\U_i^*) =g_i(|\U_i^*|)\Idn +g'_i(|\U_i^*|)  \frac {\U_i^*(\U_i^*)^T}{|\U_i^*|}
= g_i(|c_i|\, |\x|^{1-n} )\Idn +g'_i(|c_i|\,|\x|^{1-n})  |c_i|\, |\x|^{-1-n} \x\x^T.
\eeq
Since these matrices are symmetric, so is $\Bm$. 
For each $i=1,2$ and arbitrary $\mathbf{z}\in \mathbb R^n$,   
\begin{align*}
\mathbf{z}^T \G'_i(\U_i^*) \mathbf{z} 
&= g_i(|c_i|\, |\x|^{1-n} )|\mathbf{z}|^2 +g'_i(|c_i|\,|\x|^{1-n})  |c_i|\, |\x|^{-1-n} |\x\cdot {\mathbf z}|^2.
\end{align*}
Define
\begin{align}
\label{betadef}
\beta &=\beta(\x)\eqdef \sum_{i=1}^2 F_i(S_*(\x))g_i(|c_i|\, |\x|^{1-n}),  \\
\label{gamdef}
\gamma &= \gamma(\x)\eqdef \sum_{i=1}^2 F_i(S_*(\x))g'_i(|c_i|\,|\x|^{1-n})  |c_i|\, |\x|^{1-n}.
\end{align}
Then
\begin{align}
\label{B2}
\beta |\mathbf{z} |^2 \le \mathbf{z}^T \Bm \mathbf {z} &\le (\beta+\gamma) |\mathbf{z} |^2 .
\end{align} 

The first inequality in \eqref{B2} proves that $\mathbf{z}^T \Bm \mathbf {z}>0$ for all $\mathbf{z}\neq 0$. Therefore, $\Bm$ is positive definite and hence it is invertible.  
Since  $\Bm$ is symmetric, so is its inverse $\Am$.  Thus, we have:

\begin{lemma}\label{Bsym}
For any $c_1^2+c_2^2>0$ and $\x\ne 0$, matrices $\Bm(\x)$ and $\Am(\x)$ are symmetric, invertible and positive definite.
\end{lemma}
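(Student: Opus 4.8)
The plan is to assemble into one argument the facts already displayed just before the statement. First I would settle symmetry. By the explicit formula \eqref{primeGi}, each matrix $\G'_i(\U_i^*)$ equals a scalar times $\Idn$ plus a scalar times the rank-one symmetric matrix $\x\x^T$ (and when $c_i=0$ the $i$th contribution reduces to $g_i(0)\Idn=a_0\Idn$), so it is symmetric. Since the steady state satisfies $S_*(\x)=\hat S(|\x|)\in(0,1)$, Assumptions A and B give $f_1(S_*(\x))>0$, $f_2(S_*(\x))>0$ and $p_c'(S_*(\x))>0$, hence the coefficients $F_i(S_*(\x))=1/(p_c'(S_*(\x))f_i(S_*(\x)))$ appearing in \eqref{mtxB} are strictly positive; therefore $\Bm(\x)$, being a positive linear combination of symmetric matrices, is symmetric.

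Next I would prove positive definiteness via the two-sided bound \eqref{B2}. For $\mathbf z\in\R^n$ the computation preceding the statement gives $\mathbf z^T\G'_i(\U_i^*)\mathbf z=g_i(|c_i|\,|\x|^{1-n})|\mathbf z|^2+g'_i(|c_i|\,|\x|^{1-n})|c_i|\,|\x|^{-1-n}|\x\cdot\mathbf z|^2$; since every Forchheimer polynomial has nonnegative coefficients we have $g'_i\ge 0$ on $(0,\infty)$, and by Cauchy--Schwarz $0\le|\x\cdot\mathbf z|^2\le|\x|^2|\mathbf z|^2$, so $g_i(|c_i|\,|\x|^{1-n})|\mathbf z|^2\le\mathbf z^T\G'_i(\U_i^*)\mathbf z\le\big(g_i(|c_i|\,|\x|^{1-n})+g'_i(|c_i|\,|\x|^{1-n})|c_i|\,|\x|^{1-n}\big)|\mathbf z|^2$. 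Multiplying by $F_i(S_*(\x))>0$ and summing over $i=1,2$ yields \eqref{B2} with $\beta,\gamma$ as in \eqref{betadef}--\eqref{gamdef}. Because the constant term $a_0$ of each $g_i$ is positive while the remaining terms are nonnegative, $g_i(s)\ge a_0>0$ for all $s\ge0$; combined with $F_i(S_*(\x))>0$ this forces $\beta(\x)>0$. Then $\mathbf z^T\Bm(\x)\mathbf z\ge\beta(\x)|\mathbf z|^2>0$ for all $\mathbf z\ne0$, so $\Bm(\x)$ is positive definite, in particular invertible, and $\Am(\x)=\Bm(\x)^{-1}$ is well defined.

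Finally I would transfer the properties to $\Am$: from $\Am^T=(\Bm^{-1})^T=(\Bm^T)^{-1}=\Bm^{-1}=\Am$ we get symmetry, and for $\mathbf z\ne0$ we have $\mathbf z^T\Am(\x)\mathbf z=(\Am(\x)\mathbf z)^T\Bm(\x)(\Am(\x)\mathbf z)>0$ since $\Am(\x)\mathbf z\ne0$ and $\Bm(\x)$ is positive definite, so $\Am(\x)$ is positive definite as well. There is no serious obstacle in this proof; essentially everything needed is already in the text, and the only step that is not quite immediate is the strict positivity of the weights $F_i(S_*(\x))$, which rests on $S_*(\x)$ lying in the open interval $(0,1)$ together with Assumptions A and B forcing $f_1,f_2$ and $p_c'$ to be strictly positive there. (Note that the hypothesis $c_1^2+c_2^2>0$ is not actually used for this particular conclusion; it is retained for consistency with the steady states under consideration.)
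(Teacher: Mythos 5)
Your proof is correct and follows essentially the same route as the paper: symmetry from the explicit formula \eqref{primeGi}, positive definiteness of $\Bm$ from the lower bound in \eqref{B2} together with $\beta>0$, and transfer of these properties to $\Am=\Bm^{-1}$. The only difference is cosmetic — you argue positive definiteness of $\Am$ directly via $\mathbf z^T\Am\mathbf z=(\Am\mathbf z)^T\Bm(\Am\mathbf z)$, whereas the paper handles it a few lines later via the spectral theorem ($\lambda_k(\Am)=1/\lambda_{n+1-k}(\Bm)$), and you spell out the positivity of $F_i(S_*(\x))$ and $\beta(\x)$ which the paper takes for granted.
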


Since matrix $\Bm$ is symmetric and positive definite, it has positive eigenvalues
$\lambda_1(\Bm)\le \lambda_2(\Bm)\le\cdots\le \lambda_n(\Bm)$.
We have
\beq\label{spec}
  \lambda_1(\Bm)=\min_{\mathbf z\neq 0} \frac{\mathbf z^T \Bm\mathbf z}{|\mathbf z|^2 }\quad \text{and}\quad
  \lambda_n(\Bm)=\max_{\mathbf z\neq 0} \frac{\mathbf z^T \Bm\mathbf z}{|\mathbf z|^2 }. 
\eeq
It follows from \eqref{spec} and  \eqref{B2} that 
\beq\label{eigB}
\beta \le \lambda_{1} (\Bm)\le \lambda_{n} (\Bm) \le \beta+\gamma.
\eeq
By the Spectral Theorem,
\beq\label{eigA}
\lambda_{1}(\Am)=\frac 1{\lambda_{n}(\Bm)} \ge \frac1{\beta+\gamma}
\quad \text{and}\quad
\lambda_{n}(\Am)=\frac 1{\lambda_{1}(\Bm)} \le \frac1{\beta}.
 \eeq 

We now consider $0<r_0\le |\x| <R_{\rm max}$. Let $a_0^{(i)}=g_i(0)$ for $i=1,2$, and define 
\begin{align}
d_0&=\min\{a_0^{(1)},a_0^{(2)}\} , 
  \quad  d_1=d_1(r_0)=\sum_{i=1}^2 g_i(|c_i|r_0^{1-n}),\\
d_2&=d_2(r_0)=\sum_{i=1}^2 g_i(|c_i|r_0^{1-n})|c_i|r_0^{1-n},
  \quad d_3=d_3(r_0)=\sum_{i=1}^2 g'_i(|c_i|r_0^{1-n})|c_i|r_0^{1-n},\\
d_4&=d_4(r_0)=d_1+d_3.
\end{align}
Then
\beq\label{bgest}
d_0 \sum_{i=1}^2 F_i(S_*(\x)) \le \beta(\x)\le d_1 \sum_{i=1}^2 F_i(S_*(\x))\quad\text{and}\quad
\gamma(\x)\le d_3 \sum_{i=1}^2 F_i(S_*(\x)).
\eeq
By \eqref{B2}, \eqref{eigA} and \eqref{bgest},
\beq\label{B3}
d_0 |\mathbf{z} |^2 \sum_{i=1}^2  F_i(S_*(\x)) \le \mathbf{z}^T \Bm(\x) \mathbf {z} \le d_4 |\mathbf{z} |^2 \sum_{i=1}^2  F_i(S_*(\x)),
\eeq 
\beq
\frac1{d_4\sum_{i=1}^2 F_i(S_*(\x))}\le \lambda_{1}(\Am)
\le \lambda_{n}(\Am)\le \frac1{d_0\sum_{i=1}^2 F_i (S_*(\x))}.
 \eeq 
Applying \eqref{spec} to matrix $\Am$, we have
 \beq\label{Aellip}
 \mathbf  z^T\Am(\x)\mathbf  z \ge \lambda_1(\Am)|\mathbf  z|^2 \ge \frac{|{\bf z}|^2}{d_4\sum_{i=1}^2 F_i(S_*(\x))}\quad \forall\bf z\in\R^n.
 \eeq
Denote by $|\Am|$ and $\|\Am\|_{\rm op}$ the Euclidean and operator norms of matrix $\Am$, respectively.
Then
 \beq\label{equiv}
 |\Am|\le c_0\|\Am\|_{\rm op}=c_0\lambda_{n}(\Am) ,
 \eeq       
for some constant $c_0>0$.  Thus,
 \beq\label{Abound}
 |\Am(\x)|\le \frac{c_0}{d_0\sum_{i=1}^2 F_i (S_*(\x))}\quad \forall |\x|\in [r_0,R_{\rm max}).
 \eeq       

For the boundedness of $\mathbf b$, we have 
\beq\label{best}
|\mathbf b(\x)| \le  \sum_{i=1}^2 \Big[ |F'_i(\hat S(|\x|)) | g_i( |c_i| |\x|^{1-n}) |c_i| |\x|^{1-n}\Big] 
\le d_2 \sum_{i=1}^2 |F'_i(\hat S(|\x|)) |\quad \forall |\x|\in [r_0,R_{\rm max}).
\eeq

From \eqref{Lamddef} and \eqref{lamr},  
\beq\label{Lambdform}
\Lambda(\x) = \int_{r_0}^{|\x|} r\lambda(r)dr 
=\int_{r_0}^{|\x|}\Big[ F'_2(\hat S(r))G_2(c_2 r^{1-n})  - F'_1(\hat S(r))G_1( c_1 r^{1-n})\Big]dr.
\eeq
Then
\beq\label{Lamest1}
|\Lambda (\x)|\le d_2\int_{r_0}^{|\x|} \Big[ |F'_1(\hat S(r))| + |F'_2(\hat S(r))|\Big]dr \quad \forall |\x|\in [r_0,R_{\rm max}).
\eeq

Also, matrix $\Bm$ has the following special property:
\beq\label{Bxx}
\Bm(\x)\x
= \sum_{i=1}^2 \Big\{ F_i ( \hat S(|\x|))\left[ g_i(|c_i| |\x|^{1-n} ) +g'_i(|c_i| |\x|^{1-n})  |c_i| |\x|^{1-n} \right] \Big\} \x
=\phi(|\x|)\x,
\eeq
where 
\beq\label{defphi}
\phi (r)=  \sum_{i=1}^2 F_i ( \hat S(r))\left[g_i(|c_i| r^{1-n} ) +g'_i( |c_i| r^{1-n})  |c_i| r^{1-n}\right].
\eeq
Since $g'_i\ge 0$,
\beq\label{phiest0}
\phi (r) \ge   d_0  [F_1( \hat S(r))+F_2( \hat S(r))] \quad  \forall r \in [r_0,R_{\rm max}).
\eeq
Since $g_i(s)$ and $g_i'(s)s$ are increasing on $[0,\infty)$, we have
\beq\label{phiest1}
 \phi(r)\le d_4[F_1(\hat S(r))+F_2(\hat S(r))] \quad\forall r \in [r_0,R_{\rm max}).
\eeq

We now discuss the regularity of the involved functions. For $D\subset \R^n\times\R$, we define class $C_{\x}^m(D)$ as the set of functions $f(\x,t)\in C(D)$ whose partial derivatives with respect to $\x$ up to order $m$ are continuous in $D$. The class $C_{t}^m$ is defined similarly and $C_{\x,t}^{m,m'}=C_{\x}^m\cap C_{t}^{m'}$. 

Note that
\beq\label{derA}
\frac{\partial \Am}{\partial x_i}=-\Am \frac{\partial \Bm}{\partial x_i} \Am.
\eeq
By definitions \eqref{mtxB}, \eqref{bdef}, \eqref{cdef} and relation \eqref{derA}, we easily obtain:

\begin{lemma}\label{smooth}
Assume $F_1,F_2\in C^m((0,1))$ for some $m\ge 1$. Let $R\in (r_0,R_{\rm max})$ and
denote $$\mscrO=\{\x:r_0<|\x|<R\}.$$

{\rm (i)} Then $\Bm,\Am\in C^m(\bar \mscrO)$, $\mathbf b\in C^{m-1}(\bar \mscrO)$
and $\Lambda\in C^m(\bar \mscrO)$.

{\rm (ii)} If, in addition, $\mathbf V\in X(\mscrO\times (0,\infty))$ then $\mathbf c\in X(\mscrO\times (0,\infty))$, where $X$ can be $C^m$ 
or $C_{\x}^m$ or $C_{t}^m$.
\end{lemma}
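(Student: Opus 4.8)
The plan is to trace the regularity claims directly through the defining formulas, using the only nontrivial inputs: (a) that $\hat S \in C^1([r_0,R])$ with values in a compact subinterval of $(0,1)$, which follows from Theorem \ref{localODE} since $R < R_{\rm max}$; and (b) that the Forchheimer polynomials $g_i$ are $C^\infty$ on $[0,\infty)$, being finite sums of (real) powers with the constant term $a_0^{(i)}>0$ (so no singularity at the origin enters, since the arguments $|c_i||\x|^{1-n}$ stay bounded away from trouble on $\bar\mscrO$ anyway). The coordinate maps $\x\mapsto |\x|$ and $\x\mapsto |\x|^{1-n}\x$ are $C^\infty$ on $\bar\mscrO$ because $|\x|$ is bounded between $r_0>0$ and $R$ there.

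First I would establish part (i). For $\Bm$: by \eqref{mtxB} and \eqref{primeGi}, $\Bm(\x)$ is built from $F_i(S_*(\x)) = F_i(\hat S(|\x|))$, from $g_i$ and $g_i'$ evaluated at $|c_i||\x|^{1-n}$, and from the matrix $\x\x^T$ and scalars $|\x|$ to various powers. Since $\hat S\in C^1$, since $F_i\in C^m((0,1))$ and $\hat S$ has range in a compact subset of $(0,1)$, the composition $F_i\circ\hat S\circ|\cdot|$ lies in $C^{\min(m,1)}=C^1$ — wait, more carefully: $\hat S$ is only $C^1$, so $F_i(\hat S(r))$ is only $C^1$ in $r$ regardless of $m$. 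I should note this and observe it still suffices: actually the hypothesis $F_i\in C^m$ with $m\ge 1$ combined with $\hat S\in C^1$ gives $F_i\circ \hat S \in C^1$, and this is the bottleneck for every object. So in fact $\Bm,\Am\in C^1(\bar\mscrO)$ always, and the sharper statement $C^m$ holds because — here I must invoke that $\hat S$ itself inherits higher regularity from the ODE \eqref{Sprime}: since $S' = F(r,S)$ with $F$ built from $F_1,F_2\in C^m$ and smooth coefficients, bootstrapping the ODE gives $\hat S\in C^{m+1}([r_0,R])$. With that, $F_i(\hat S(|\x|))\in C^m(\bar\mscrO)$, hence $\Bm\in C^m(\bar\mscrO)$ as a sum of products and compositions of $C^m$ (indeed $C^\infty$) functions with $C^m$ functions. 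Then $\Am = \Bm^{-1}$: by Lemma \ref{Bsym}, $\Bm(\x)$ is invertible for every $\x\in\bar\mscrO$, and matrix inversion is a $C^\infty$ map on the open set of invertible matrices, so $\Am\in C^m(\bar\mscrO)$. For $\mathbf b$: by \eqref{bdef}, $\mathbf b$ involves $F_i'(S_*(\x)) = F_i'(\hat S(|\x|))$; since $F_i\in C^m$, $F_i'\in C^{m-1}$, and composing with $\hat S\in C^{m+1}\subset C^m$ (via $|\cdot|$) keeps it in $C^{m-1}(\bar\mscrO)$; multiplying by the smooth factors $g_i(\cdots)$ and $\x$ preserves this, so $\mathbf b\in C^{m-1}(\bar\mscrO)$. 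For $\Lambda$: from \eqref{Lambdform}, $\Lambda(\x) = \int_{r_0}^{|\x|} r\lambda(r)\,dr$ where $r\lambda(r)$ involves $F_i'(\hat S(r))$, hence $r\lambda(r)\in C^{m-1}([r_0,R])$; integrating gains one derivative, so $r\mapsto\int_{r_0}^r (\cdots)$ is $C^m$, and composing with the $C^\infty$ map $\x\mapsto|\x|$ gives $\Lambda\in C^m(\bar\mscrO)$ — consistent with \eqref{b&lamb}, $\nabla\Lambda = \mathbf b\in C^{m-1}$.

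Next I would dispatch part (ii): by \eqref{cdef}, $\mathbf c(\x,t) = F_1(S_*(\x))\G_1'(\U_1^*(\x))\mathbf V(\x,t)$, a product of the $\x$-only matrix $F_1(S_*(\x))\G_1'(\U_1^*(\x))$, which by the argument above lies in $C^m(\bar\mscrO)$ and a fortiori in $C^m_{\x}$ and (trivially, being $t$-independent) in $C^m_t$, with the given $\mathbf V$. Whichever of the three classes $X\in\{C^m, C^m_{\x}, C^m_t\}$ contains $\mathbf V$, the product with a $C^m$ function of $\x$ alone stays in $X$: for $X=C^m_{\x}$ one differentiates in $\x$ up to order $m$ by Leibniz; for $X=C^m_t$ the $\x$-factor is a constant with respect to $t$; for $X=C^m$ (joint) one combines both. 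I would state this in one or two sentences. The main obstacle, and the only place genuine care is needed, is the regularity of $\hat S$: the ODE only hands us $\hat S\in C^1$ directly, so the clean $C^m$ conclusions require the bootstrap argument on \eqref{Sprime}, observing that the right-hand side $F(r,S)=G_2(c_2r^{1-n})F_2(S)-G_1(c_1r^{1-n})F_1(S)$ is $C^m$ jointly in $(r,S)$ on $[r_0,R]\times[\delta,1-\delta]$ whenever $F_1,F_2\in C^m$, so $S'\in C^m$ forces $\hat S\in C^{m+1}$. Everything else is composition, products, and the smoothness of matrix inversion on $GL_n$.
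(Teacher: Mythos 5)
Your proof is correct, and it is in fact more careful than the paper, which dismisses the lemma with a single ``we easily obtain'' and cites only the definitions \eqref{mtxB}, \eqref{bdef}, \eqref{cdef} and the identity \eqref{derA}. The crucial point you identified --- that Theorem \ref{localODE} only delivers $\hat S\in C^1$, so one must bootstrap the ODE $\hat S'=F(r,\hat S)$ (with $F_1,F_2\in C^m$) to obtain $\hat S\in C^{m+1}([r_0,R])$ before $\Bm\in C^m(\bar\mscrO)$ can even be asserted --- is exactly the step that makes the lemma non-trivial and that the paper's phrasing glosses over. Your treatment of the remaining steps (chain rule for $F_i\circ\hat S$, smoothness of matrix inversion on $GL_n$, the gain of one derivative in the definition of $\Lambda$, and the product structure of $\mathbf c$) is standard and correctly executed; the check $\nabla\Lambda=\mathbf b$ against \eqref{b&lamb} is a nice sanity check.

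One small imprecision worth fixing: $g_i$ is \emph{not} $C^\infty$ on the closed half-line $[0,\infty)$ when the exponents $\alpha_j$ include non-integers less than $1$ (e.g.\ $s^{1/2}$ is not $C^1$ at $0$); the correct statement is that $g_i\in C^\infty((0,\infty))$ and $g_i\in C([0,\infty))$. Your parenthetical that the arguments $|c_i|\,|\x|^{1-n}$ ``stay bounded away from trouble on $\bar\mscrO$'' resolves this when $c_i\neq 0$, since then $|c_i|\,|\x|^{1-n}\in[|c_i|R^{1-n},\,|c_i|r_0^{1-n}]\subset(0,\infty)$; but when $c_i=0$ the argument is identically zero, which is precisely the potential trouble point. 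The correct resolution in that case is different: $\U_i^*\equiv 0$, and $\G_i'(\U_i^*)\equiv g_i(0)\,\Idn$ is a constant matrix (the singular-looking second term in \eqref{primeGi} must be read as vanishing, consistent with the direct computation of $\G_i'(\mathbf 0)$), hence trivially $C^\infty$. You should state that case separately rather than fold it into the ``bounded away from zero'' remark. With that adjustment the argument is fully correct.
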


\myclearpage
\section{Case of bounded domain}\label{bounded}

In this section, we study the linear stability of the  obtained  steady flows in section \ref{Steady} on bounded domains.
More specifically, we investigate the stability of the trivial solution for the linearized system \eqref{linsys}. 
The key instrument in proving the asymptotic stability is a Landis-type lemma of growth (see \cite{LandisBook}). To prove such a lemma we  use specific structures of the coefficients of equation  \eqref{neweq}  to construct singular  sub-parabolic functions. These are motivated by the so-called $F_{s,\beta}$ functions introduced in \cite{LandisBook}.

  

Let $r_0>0$ be fixed throughout.
We consider in this section an open, bounded set $U$ in $\R^n\setminus \bar B_{r_0}$.
We fix $R>0$ such that $U\subset \mscrU \eqdef  B_R\setminus \bar B_{r_0}$.
Denote $\Gamma=\partial U$, $D=U\times(0,\infty)$ and $\mscrD=\mscrU\times (0,\infty)$.

We consider a steady state $(u^*_1(\x),u^*_2(\x),S_*(\x))$ as in \eqref{Sradial} with $c_1^2+c_2^2>0$. 
Recall that \eqref{sigt}, \eqref{v1} and \eqref{v2} is our linearized system for \eqref{mainsys}.
We study the equation for $\sigma(\x,t)$ first. More specifically, we study the following initial-boundary value problem (IBVP):
  \beq\label{sigEq}
  \begin{cases}
  \sigma_t = \nabla\cdot\Big[ \Am(\nabla \sigma -  \sigma {\bf b})\Big] + \nabla\cdot(\Am \mathbf c) & \text { on } U\times (0,\infty),\\
   \sigma = g(\x,t) &\text { on } \Gamma \times(0, \infty),\\
    \sigma =\sigma_0(\x)  &\text { on } U \times\{t=0\}.    
   \end{cases}
   \eeq

Regarding the initial and boundary data in \eqref{sigEq}, we always assume that
\beq \label{datacond}
\sigma_0\in C(\bar U),\  g\in C(\Gamma\times[0,\infty))\text{ and } \sigma_0(\x)=g(\x,0) \text{ on }\Gamma.
\eeq

Assume that 
\beq\label{Sassump}
0<\underline{s}\le \hat S(r)\le \bar s<1 \quad \forall r\in [r_0,R],
\quad\text{where } \underline{s} \text{ and }\bar s \text{ are constants.}
\eeq
Assumption \eqref{Sassump} is valid for any solution $\hat S$ in Theorem \ref{localODE} with $R_{\rm max}>R$, in particular, when $R_{\rm max}=\infty$ as in Theorem \ref{globalODE}.
Under constraint \eqref{Sassump} and Assumptions A and B, we easily see the following facts.
Let 
\beq \label{mudef}
\mu_1=\sum_{i=1}^2 \max_{\underline{s}\le s\le \bar s} F_i(s), \quad
\mu_2=  \sum_{i=1}^2  \min_{\underline s \le s \le \bar s} F_i(s),\quad
\mu_3=\sum_{i=1}^2 \max_{\underline s \le s \le \bar s} |F'_i(s)|.
\eeq
Then $\mu_1$, $\mu_2$ and $\mu_3$ are positive numbers.

 From \eqref{Aellip} and \eqref{Sassump} follows that 
\beq\label{Aellip1}
{\bf z}^T\Am(\x){\bf z}\ge \frac{|{\bf z}|^2}{C_0}\quad \forall\x\in \bar \mscrU,\ \mathbf z\in\R^n,
\eeq
where  $C_0=d_4 \mu_1$.

From \eqref{Abound}, \eqref{best} and \eqref{Sassump}, we get
\beq\label{Abbound1}
 |\Am(\x)|\le \frac{c_0}{C_1}\quad\text{and}\quad \quad  |\mathbf b (\x)|\le C_2\quad\forall \x\in \bar \mscrU,
\eeq
where $c_0$ is in \eqref{equiv}, $C_1=d_0\mu_2$ and $C_2=d_2\mu_3$.

For the smoothness, by Lemma \ref{smooth},
\beq \label{smoooth1}
\Bm,\Am\in C^1(\bar \mscrU)\quad\text{and}\quad \mathbf b\in C(\bar \mscrU).
\eeq


First, we consider the the existence of classical solutions of \eqref{sigEq}.
We use the known result from theory of linear parabolic equations in  \cite{IKO}. This will require certain regularity of the coefficients of \eqref{sigEq}. Those requirements, in turn, can be formulated in terms of functions $F_1$ and $F_2$, thanks to  Lemma \ref{smooth}. 

\textbf{Condition} \condEa. $F_1,F_2\in C^7((0,1))$ and $V\in C_{\x}^6(\bar D)$; $V_t \in C_{\x}^3(\bar D)$.


\begin{theorem}[\cite{IKO}]\label{eu}
Assume \condEa, then there exists a unique solution $\sigma\in C(\bar D)\cap C^{2,1}_{\x,t}(D)$ of problem \eqref{sigEq}.
\end{theorem}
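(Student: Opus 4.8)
\textbf{Proof proposal for Theorem \ref{eu}.} The plan is to verify that the IBVP \eqref{sigEq} falls into the class of second-order linear uniformly parabolic initial-boundary value problems covered by the existence and uniqueness theory of Ladyzhenskaya--Solonnikov--Uraltseva (as presented in \cite{IKO}), and then simply quote the relevant theorem. First I would write \eqref{sigEq} in non-divergence form by expanding the divergence: $\sigma_t = \sum_{i,j}A_{ij}(\x)\,\partial_i\partial_j\sigma + \sum_i \tilde b_i(\x)\,\partial_i\sigma + \tilde c(\x)\,\sigma + f(\x,t)$, where the first-order and zeroth-order coefficients collect the derivatives of $\Am$ and of $\mathbf b$, and the forcing is $f = \nabla\cdot(\Am\mathbf c)$. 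The principal part has coefficient matrix $\Am(\x)$, which by Lemma \ref{Bsym} is symmetric positive definite on $\bar\mscrU$, and uniform parabolicity on $\bar D$ is exactly the two-sided bound: the lower ellipticity bound \eqref{Aellip1} and the upper bound $|\Am(\x)|\le c_0/C_1$ from \eqref{Abbound1}, both valid because of the standing assumption \eqref{Sassump}.

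Next I would check the regularity hypotheses demanded by the parabolic existence theorem, namely Hölder (indeed higher) continuity of the coefficients and of the data, together with the compatibility condition. By Lemma \ref{smooth}(i), since Condition \condEa\ gives $F_1,F_2\in C^7((0,1))$, we get $\Am\in C^7(\bar\mscrU)$, $\mathbf b\in C^6(\bar\mscrU)$, $\Lambda\in C^7(\bar\mscrU)$; combined with \eqref{Sassump} these functions and all the indicated derivatives are bounded on $\bar\mscrU$. By Lemma \ref{smooth}(ii), since $V\in C_\x^6(\bar D)$ and $V_t\in C_\x^3(\bar D)$ (the second half of \condEa), the vector $\mathbf c = F_1(S_*)\G_1'(\U_1^*)\mathbf V$ inherits the same spatial regularity and the corresponding $t$-regularity, so the forcing term $f=\nabla\cdot(\Am\mathbf c) = (\nabla\cdot\Am)\cdot\mathbf c + \operatorname{tr}(\Am\, D\mathbf c)$ lies in $C_\x^5(\bar D)$ with $f_t$ of the required order — comfortably more than the Hölder continuity needed. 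The lateral datum $g\in C(\Gamma\times[0,\infty))$ and initial datum $\sigma_0\in C(\bar U)$ satisfy the zeroth-order compatibility condition $\sigma_0=g(\cdot,0)$ on $\Gamma$ by \eqref{datacond}. The generous smoothness margin in \condEa\ (seven derivatives rather than the bare Hölder minimum) is taken precisely so that one may invoke the Schauder-type interior and boundary estimates of \cite{IKO} on every subcylinder $U\times(0,T)$ and pass to the limit $T\to\infty$, obtaining a solution $\sigma\in C(\bar D)\cap C^{2,1}_{\x,t}(D)$; uniqueness follows from the maximum principle for the same operator, or directly from the uniqueness clause of the cited theorem.

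The only genuinely non-routine point — and the one I would dwell on — is checking that the hypotheses of the theorem quoted from \cite{IKO} are met \emph{with the correct function spaces}, since the boundary $\Gamma=\partial U$ of the bounded set $U\subset\mscrU$ is assumed only to be such that the IBVP is solvable, and the data are merely continuous rather than Hölder; one must therefore use the version of the theory that produces a solution continuous up to $\bar D$ under continuous data (and sufficiently smooth coefficients and sufficiently regular $\Gamma$), rather than the $C^{2+\alpha,1+\alpha/2}$-up-to-the-boundary version. Everything else — uniform parabolicity, boundedness and smoothness of lower-order coefficients, smoothness of the right-hand side, and the compatibility condition — reduces mechanically to the estimates \eqref{Aellip1}, \eqref{Abbound1}, \eqref{smoooth1} and Lemma \ref{smooth}, all of which are already in hand. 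Hence the statement follows directly from \cite{IKO}, and no further argument is needed beyond this verification.
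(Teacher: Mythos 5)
Your proposal is correct and takes essentially the same approach as the paper: the paper itself gives no proof of Theorem \ref{eu} beyond citing \cite{IKO}, with the surrounding discussion pointing to Lemma \ref{smooth} to translate \condEa\ into coefficient regularity and to \eqref{Aellip1}, \eqref{Abbound1} for uniform parabolicity, which is exactly the verification you spell out. Your closing caveat about using the version of the theory in \cite{IKO} that yields solutions in $C(\bar D)\cap C_{\x,t}^{2,1}(D)$ from merely continuous data (rather than a Hölder-up-to-the-boundary Schauder theorem) is apt, and in fact touches on something the paper leaves implicit, namely that no explicit smoothness of $\Gamma=\partial U$ is assumed; this is a hypothesis of the cited theorem that the paper does not state but tacitly requires.
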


Note that we did not attempt to optimize Condition \condEa. As seen below, the study of qualitative properties of solution $\sigma$ will require much less stringent conditions than \condEa.

Now we turn to the stability, asymptotic stability and structural stability issues.
Our main tool is the maximum principle. As discussed in the previous section, we use the transformation \eqref{w-sig} to convert the PDE in \eqref{sigEq} to a more convenient form \eqref{neweq}.
Define the differential operator on the left-hand side of \eqref{neweq} by
\beq\label{Lw}
\mathcal Lw =\partial_t w- \nabla\cdot (\Am\nabla w) - {\bf b} \cdot \Am\nabla w.
\eeq
Corresponding to \eqref{sigEq}, the IBVP for $w(\x,t)$ is
  \beq\label{nonhom}
   \begin{cases}
   \mathcal L w = \fo & \text {in } U\times (0,\infty),  \\
    w(\x,0) =w_0(\x)& \text {in } U,\\ 
     w(\x,t)=  G(\x,t) &\text {on } \Gamma\times(0,\infty),
   \end{cases}
\eeq
where $w_0(\x)$ and $G(\x,t)$ are given initial data and boundary data, respectively, and $\fo(\x,t)$ is a known function.
We will obtain results for  solution $w$ of \eqref{nonhom}  and then reformulate them in terms of solution $\sigma$ of the original problem \eqref{sigEq}.

Since the existence and uniqueness issues are settled in Theorem \ref{eu}, our main focus now is the qualitative properties of solution $w$ of \eqref{nonhom}.
For these, we only need properties  \eqref{Aellip1}, \eqref{Abbound1}, the special structure of  equation \eqref{sigEq}, and the assumption that the classical solution in $C(\bar D)\cap C^{2,1}_{\x,t}(D)$ already exists. The fine properties of the solutions obtained below have their own merit in the theory of linear parabolic equations.

It follows from \eqref{Aellip1} and \eqref{Abbound1} that the maximum principle holds for any classical solution of $\mathcal L w \le (\ge) 0$ in $D$. To obtain better estimates for solutions, especially as $t\to\infty$, we use the  following barrier function.
Define
\beq\label{subPar}
 W(\x,t)= \begin{cases} t^{-s} e^{-\frac{\varphi (\x)}t}  &\text{if }  t > 0,\\
 0 & \text {if } t\le 0,
 \end{cases}
\eeq
where the number $s>0$  and the function $\varphi(\x)>0$ will be decided later.
Then
\beqs
 \mathcal L W = t^{-s-2}e^{-\frac\varphi t} \Big\{ t\big(-s+\nabla\cdot(\Am\nabla \varphi)+ {\bf b}\cdot \Am\nabla \varphi  \big)  +\varphi - (\Am\nabla \varphi) \cdot \nabla \varphi   \Big\}.
 \eeqs
Thus, $\mathcal L W\le 0$  if  
 \beq\label{sphicond}
 s\ge \nabla\cdot(\Am\nabla \varphi)+ {\bf b}\cdot \Am\nabla \varphi \quad\text{and}\quad
 \varphi \le (\Am\nabla \varphi) \cdot \nabla \varphi .
 \eeq
We will choose $\varphi$ to satisfy
\beq\label{gradA}
\Am\nabla \varphi = \kap \x,
\eeq
where $\kap$ is a positive constant selected later.
Equivalently, with the use of \eqref{Bxx},
\beq\label{gradphi}
\nabla \varphi =\kap  \Am^{-1}\x =\kap  \Bm\x =\kap \phi(|\x|)\x, 
\eeq
where $\phi (r)$ is defined by \eqref{defphi}.
By \eqref{phiest0}, \eqref{Sassump} and \eqref{mudef},
\beq\label{Cone}
\phi(r)\ge d_0 \mu_2=C_1 \quad\text{for } r_0\le r\le R.
\eeq
By  \eqref{phiest1}, \eqref{Sassump} and \eqref{mudef},
\beq\label{Czero}
\phi(r)\le d_4 \mu_1=C_0 \quad\text{for } r_0\le r\le R.
\eeq

Define for $\x\in \bar \mscrU$ the function
\beq\label{phi1}
\varphi(\x) = \kap\Big(\varphi_0+\int_{r_0}^{|\x|} r \phi(r) dr\Big),\quad \text{where }
\varphi_0= \frac{C_0r_0^2}2 \text{ and } \kap = \frac{C_0}{2C_1 }.
\eeq 
Then $\varphi(\x)$ satisfies both equations \eqref{gradA} and \eqref{gradphi}.
We have for $\x\in \bar\mscrU$ that
\beq\label{phibound}
0< \varphi ( \x)\le \kap \Big(\varphi_0+C_0 \int_{r_0}^{|\x|} r dr\Big)=\frac{\kap C_0} 2  |\x|^2.
\eeq
  Applying \eqref{gradA}, \eqref{gradphi}, and then \eqref{B3} and \eqref{mudef} we obtain  
\beq\label{Appbound}
(\Am\nabla \varphi) \cdot \nabla \varphi =\kap^2 \x^T \Bm\x
 \ge d_0 \kap^2\Big( \sum_{i=1}^2 F_i(\hat S(|\x|)) \Big)|\x|^2
 \ge d_0 \kap^2 \mu_2|\x|^2
= \kap^2 C_1|\x|^2=\frac{\kap C_0} 2  |\x|^2
\eeq
Then we have from \eqref{phibound} and \eqref{Appbound} that
$\varphi \le (\Am\nabla\varphi) \cdot \nabla \varphi$ in $\mscrU$, which is the second requirement in \eqref{sphicond}.
On the other hand, by \eqref{gradA} and \eqref{Abbound1},
\beq\label{bound0}
 \nabla\cdot(\Am\nabla \varphi)+ {\bf b}\cdot \Am\nabla \varphi =\kap(\nabla\cdot \x + {\bf b}\cdot \x) \le \kap(n+ C_2 R).
\eeq
Select 
\beq\label{s}
s=s_R\eqdef \kap(n+ C_2 R).  
\eeq
Then we have $s\ge \nabla\cdot(\Am\nabla \varphi)+ \mathbf b\cdot (\Am\nabla \varphi)$ in $\mscrU$, which is the first requirement in \eqref{sphicond}.
Thus, we obtain $\mathcal LW\le 0$ in $\mscrU\times (0,\infty)$. 
For further references, we formulate this as a lemma.

\begin{lemma}\label{WsubPar}
With parameter $s=s_R$ selected as in \eqref{s} and function $\varphi$ defined by \eqref{phi1}, the function $W(\x,t)$ in \eqref{subPar} 
belongs to $C_{\x,t}^{2,1}(\mscrD)\cap C(\bar{\mscrD})$ and satisfies $\mathcal L W\le 0$ in $\mscrD$. 
\end{lemma}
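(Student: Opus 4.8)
The sub-parabolic inequality has, in essence, already been derived in the paragraph preceding the statement, so the plan is simply to record the regularity of $W$ and then to assemble the verification of the two requirements in \eqref{sphicond}; no new idea is needed.

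For the regularity, on the open set $\mscrD=\mscrU\times(0,\infty)$ the function $W(\x,t)=t^{-s}e^{-\varphi(\x)/t}$ is plainly smooth in $t$, and since $\nabla\varphi=\kap\Bm\x$ by \eqref{gradphi} with $\Bm\in C^1(\bar\mscrU)$ by \eqref{smoooth1}, we have $\varphi\in C^2(\bar\mscrU)$; hence $W\in C_{\x,t}^{2,1}(\mscrD)$. For continuity up to $\bar\mscrD$ the only nontrivial point is the behaviour as $t\to0^+$: by the definition \eqref{phi1}, $\varphi(\x)\ge\kap\varphi_0>0$ for every $\x\in\bar\mscrU$, so
\beqs
0\le W(\x,t)=t^{-s}e^{-\varphi(\x)/t}\le t^{-s}e^{-\kap\varphi_0/t}\to0\quad\text{as }t\to0^+
\eeqs
uniformly in $\x\in\bar\mscrU$. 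Thus $W$ extends continuously by $0$ across $\{t=0\}$, matching its definition \eqref{subPar}, and $W\in C(\bar\mscrD)$.

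For the inequality, the computation carried out just before the statement gives, for $t>0$,
\beqs
\mathcal L W=t^{-s-2}e^{-\varphi/t}\Big\{t\big(-s+\nabla\cdot(\Am\nabla\varphi)+\mathbf b\cdot\Am\nabla\varphi\big)+\varphi-(\Am\nabla\varphi)\cdot\nabla\varphi\Big\},
\eeqs
after using $\Am\nabla\varphi=\kap\x$ from \eqref{gradA}. Since the prefactor $t^{-s-2}e^{-\varphi/t}$ is strictly positive, it remains only to check that the braced quantity is $\le0$, namely that both requirements in \eqref{sphicond} hold. The first, $s\ge\nabla\cdot(\Am\nabla\varphi)+\mathbf b\cdot\Am\nabla\varphi$, is \eqref{bound0} combined with the choice $s=s_R=\kap(n+C_2R)$ in \eqref{s}; the second follows from \eqref{phibound} and \eqref{Appbound}, which together give $\varphi(\x)\le\frac{\kap C_0}{2}|\x|^2\le(\Am\nabla\varphi)\cdot\nabla\varphi$. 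Hence $\mathcal L W\le0$ on $\mscrD$. The whole argument is routine; the one spot deserving a little care is the limit $t\to0^+$, which hinges on the strict positivity of $\varphi$ — guaranteed by the additive constant $\varphi_0>0$ built into \eqref{phi1} — with the exponential decay then overwhelming the factor $t^{-s}$.
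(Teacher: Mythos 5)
Your proof is correct and follows the same route as the paper: the paper carries out exactly this computation of $\mathcal L W$ in the exposition immediately before the lemma, verifies \eqref{sphicond} via \eqref{phibound}, \eqref{Appbound} and \eqref{bound0} with the choice \eqref{s}, and then records the regularity in one line as a consequence of $\varphi(\x)\ge\kap\varphi_0>0$. You simply spell out the regularity step (smoothness of $\varphi$ from $\nabla\varphi=\kap\Bm\x$ with $\Bm\in C^1$, and the uniform decay of $t^{-s}e^{-\varphi/t}$ as $t\to0^+$) in more detail, which is a faithful elaboration rather than a different argument.
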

Above, the regularity of $W(\x,t)$ follows the fact that $\varphi(\x)\ge \kap\varphi_0>0$ for $\x\in \bar \mscrU$.

\medskip
We now establish this section's key lemma of growth.
We fix $s=s_R$ by \eqref{s} and also the following two parameters 
\beq
q=\frac {\kap C_0} {2s}\quad\text{and}\quad \eta_0 =\Big(\frac{r_0}R\Big)^{2s},
\eeq
and denote $D_1=U\times (0,qR^2]$. 
\begin{lemma}[Lemma of growth in time]\label{grow}
Assume $w(\x,t)\in C_{\x,t}^{2,1}(D_1)\cap C (\bar D_1)$.
If 
\beq\label{assump1}
\mathcal L w\le 0  \text{ on }  D_1
\quad \text { and } \quad 
w\le  0 \text{ on } \Gamma \times (0, qR^2),
\eeq
then
 \beq\label{a11}
\max\{ 0,\sup_U w(\x,qR^2) \} \le \frac 1{1+\eta_0} \max\{0, \sup_U w(\x,0) \}.
\eeq 
%
\end{lemma}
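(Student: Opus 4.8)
The plan is to combine the barrier function $W$ from Lemma \ref{WsubPar} with the maximum principle for $\mathcal L$ on the annular cylinder $\mscrD$. Set $M=\max\{0,\sup_U w(\x,0)\}$; if $M=0$ the maximum principle (which holds for $\mathcal L$ thanks to \eqref{Aellip1} and \eqref{Abbound1}) applied to \eqref{assump1} already gives $w\le 0$ on all of $D_1$, so \eqref{a11} is trivial. Assume $M>0$. The idea is to construct a supersolution of the form $v(\x,t)=AW(\x,t)+ (\text{boundary correction})$ dominating $w$ on the parabolic boundary of $D_1$, where $A>0$ is chosen so that $AW(\x,0^+)$, i.e.\ the limit as $t\to 0^+$, already absorbs $w(\x,0)\le M$. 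Because $W(\x,0)=0$ but $W$ blows up appropriately near $t=0$ away from the coordinate origin — more precisely $W(\x,t)=t^{-s}e^{-\varphi(\x)/t}$ with $\varphi(\x)\ge \kap\varphi_0>0$ — the right normalization uses the elementary bound $\sup_{t>0} t^{-s}e^{-a/t} = (s/(ea))^s$ for $a>0$, so that on the inner sphere $|\x|=r_0$ and outer sphere $|\x|=R$ one gets explicit lower/upper control of $W$.

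The key computation is to evaluate $W$ at the terminal time $t=qR^2$ and at $t=0$ on $U\subset\mscrU$, using $q=\kap C_0/(2s)$ and the two-sided bound \eqref{phibound}, namely $0<\varphi(\x)\le \tfrac{\kap C_0}{2}|\x|^2\le \tfrac{\kap C_0}{2}R^2 = sqR^2$ for $\x\in\bar\mscrU$. Thus at $t=qR^2$ we have $W(\x,qR^2) = (qR^2)^{-s}e^{-\varphi(\x)/(qR^2)} \ge (qR^2)^{-s}e^{-s} = (eqR^2)^{-s}$ — a positive lower bound uniform in $\x\in U$ — while the factor $\eta_0=(r_0/R)^{2s}$ enters because $\varphi$ is increasing in $|\x|$, so comparing the value of $W$ near $|\x|=R$ with its value near $|\x|=r_0$ produces exactly the ratio $(r_0/R)^{2s}$. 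Concretely I would apply the maximum principle on $\mscrD\cap\{0<t\le qR^2\}$ to the function $w - \alpha W$ for a suitable $\alpha>0$: on $\Gamma\times(0,qR^2)$ we have $w\le 0\le \alpha W$ by \eqref{assump1}; on the outer boundary $\{|\x|=R\}$ and inner boundary $\{|\x|=r_0\}$ of $\mscrU$ (which lie outside $\bar U$, so $w$ there is \emph{not} controlled) the argument must instead be run with $\mscrU$ replaced by $U$ itself, i.e.\ one works directly on $D_1=U\times(0,qR^2]$, using that $\Gamma=\partial U$ is the only lateral boundary and that $W$ is a supersolution there as well since $U\subset\mscrU$. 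Then choosing $\alpha = M\,(eqR^2)^{s}/(1+\eta_0)$ — wait, more carefully: $\alpha$ must be large enough that $\alpha W$ dominates the \emph{initial} data, but $W(\x,0)=0$, so the domination at $t=0$ is automatic and instead the comparison is run on $D_1\setminus\{t\le\tau\}$ for small $\tau>0$ and one lets $\tau\to0^+$, exploiting $\limsup_{t\to0^+}(w(\x,t)-\alpha W(\x,t))\le M$ when $\alpha$ is chosen so the $e^{-\varphi/t}$ decay beats it except on a controlled set.

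The main obstacle I anticipate is precisely this handling of the initial layer $t\to 0^+$: since $W$ vanishes at $t=0$, a naive comparison $w\le \alpha W$ at $t=0$ fails, so one needs the sharper statement that $\sup_U w(\cdot,0)$ gets multiplied by the \emph{gain factor} $1/(1+\eta_0)<1$, which forces one to split $U$ (or rather to compare across the two radii $r_0$ and $R$) and to track how $e^{-\varphi(\x)/t}$ interpolates between its boundary values; getting the constant to come out as $1/(1+\eta_0)$ rather than something weaker requires choosing the comparison function as a combination $\alpha W + \beta$ (a constant, or $\beta e^{-\Lambda}$-type term) so that at $t=0$ the constant part carries $w(\cdot,0)$ while at $t=qR^2$ the $W$-part has decayed by the factor governed by $q$ and $s$, and balancing these two contributions is what yields $\eta_0$. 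Once the barrier and the parameter bookkeeping are set up correctly, the rest is the standard maximum principle argument on $D_1$ together with the elementary monotonicity of $\varphi$ and the explicit value of $q$.
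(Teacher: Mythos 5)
Your proposal correctly identifies the right ingredients — the sub-parabolic barrier $W$ from Lemma \ref{WsubPar}, the maximum principle on $D_1$, the lower bound $\varphi\ge\kap\varphi_0$ coming from $|\x|\ge r_0$ and the upper bound $\varphi\le\tfrac{\kap C_0}{2}R^2$ coming from $|\x|\le R$, and the observation that the ratio $(r_0/R)^{2s}$ is what $\eta_0$ must come from. But the construction of the comparison function is the whole proof, and you never complete it; worse, the form you propose has the sign wrong. The paper's barrier is $\tilde W(\x,t)=M\bigl[1-\eta W(\x,t)\bigr]$ with $M=\max\{0,\sup_U w(\cdot,0)\}$, i.e.\ a constant \emph{minus} a positive multiple of $W$. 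Since $\mathcal LW\le 0$, this gives $\mathcal L\tilde W=-M\eta\,\mathcal LW\ge 0$, so $\tilde W$ is a supersolution. Then: at $t=0$, $W\equiv 0$ so $\tilde W=M\ge w$; on $\Gamma\times(0,qR^2]$, choosing $\eta=1/\max_{t>0} h_0(t)$ with $h_0(t)=t^{-s}e^{-\kap C_0 r_0^2/(2t)}$ makes $\tilde W\ge 0\ge w$; the maximum principle then gives $w\le \tilde W$ on $\bar D_1$; and at $t=qR^2$ the upper bound on $\varphi$ yields $\eta W(\x,qR^2)\ge \eta\,h_1(qR^2)=\eta_0=(r_0/R)^{2s}$, hence $w(\x,qR^2)\le M(1-\eta_0)\le M/(1+\eta_0)$.

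Your proposed form $v=AW+(\text{correction})$ with $A>0$ cannot produce a gain: at the terminal time the $W$-part is positive, so $v\ge(\text{correction})$, while the correction must already be $\ge M$ to dominate $w$ at $t=0$; you end up with $w\le M+AW$, which is the wrong direction. You recognize something is off and gesture at a combination "$\alpha W+\beta$", but with a plus sign this has the same defect. The fix is precisely to flip the sign, and once you do, your auxiliary device of working on $D_1\setminus\{t\le\tau\}$ and letting $\tau\to0^+$ becomes unnecessary — $W$ vanishes continuously at $t=0$, so $\tilde W(\cdot,0)=M$ with no limiting argument. You also never actually pin down $\eta$; your unfinished guess $\alpha=M(eqR^2)^s/(1+\eta_0)$ is abandoned mid-sentence, and the correct normalization is $\eta=(e\kap C_0 r_0^2/(2s))^s$, the reciprocal of $\max h_0$. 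So while the raw materials are all present in your sketch, the actual barrier and the bookkeeping that make the constant come out as $1/(1+\eta_0)$ are missing — a genuine gap, not a cosmetic one.
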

\begin{proof}
(i) Let $M=\max\{0,\sup_{\bar D_1} w\}$. By \eqref{assump1} and maximum principle, we have
\beq\label{Mrel}
M=\max\{0,\sup_{\bar U} w(\x,0)\}.
\eeq
Let $W(\x,t)$ be as in \eqref{subPar} and define the auxiliary function 
 \beqs
 \tilde W(\x,t) =M[1-\eta W(\x,t)],
 \eeqs 
 where constant $\eta>0$ will be specified later. Our intention is to prove that  \beq \label{domcomp}
\tilde W(\x,t)\ge w(\x,t)\quad \text{for all } (\x,t)\in \bar D_1.
\eeq 
By Lemma~\ref{WsubPar}, $\mathcal L W\le 0$ in  $D_1$, hence,  $\mathcal L \tilde W \ge0$ in  $D_1.$   
By maximum principle, it suffices to show that
\beq\label{bc-comparison}
\tilde W(\x,t)\ge w(\x,t) \quad \text{for all }  (\x,t)\in \partial_p D_1=\big[\bar U\times \{0 \} \big]\cup \big[\Gamma \times (0, qR^2]\big ].  
\eeq

On the base  $\bar U\times\left\{0\right\}$, function $W(\x,0)$ vanishes, hence,
\beqs
\tilde W(\x,0)=M\ge w(\x,0).  
\eeqs

On the side boundary $\Gamma \times (0, qR^2]$, additional analysis is required. 
First observe for $\x\in \bar \mscrU$ that
$\varphi(\x) \ge \kap\varphi_0=\frac{\kap C_0r_0^2}2$.
Therefore, 
\beq\label{Wtilde}
\tilde W(\x,t)= M \left[1- \eta t^{-s} e^{-\frac {\varphi(\x)}t} \right] \ge M\left[1- \eta t^{-s} e^{-\frac {\kap C_0r_0^2} {2t}} \right]\quad \text{in } \bar \mscrU\times [0,\infty).
\eeq
Let $h_0(t) =t^{-s} e^{-\frac {\kap C_0r_0^2} {2t}} $ for $t\ge 0$. 
By elementary calculations, the maximum of $h_0(t)$ is attained at $t_0=\frac{\kap C_0r_0^2}{2s}$.
By letting 
\beq\label{eta} 
\eta=\frac1{\max_{[0,\infty)} h_0(t)}=\frac1{h_0(t_0)}=\Big(\frac{e\kap C_0r_0^2}{2s} \Big)^{s},\eeq 
we get from \eqref{Wtilde} that
$\tilde W(\x,t)\ge M[1- \eta h_0(t_0)]=0$ in $\bar \mscrU\times [0,\infty)$.
Particularly,
 \beqs
 \tilde W(\x,t)\ge 0\ge w(\x,t)\quad \text{on } \Gamma\times (0,qR^2].
  \eeqs

Thus, the comparison in \eqref{bc-comparison} holds and, therefore, \eqref{domcomp} is proved.

We now estimate $\tilde W(\x,t)$. By \eqref{phibound}, for $(\x,t)\in D$ we have
\begin{align*}
\tilde W(\x,t) 
     \le M \left[1- \eta  t^{-s} e^{ - \frac {\kap C_0|\x|^2}{2t} }\right]
      \le M \left[1- \eta   t^{-s} e^{ - \frac {\kap C_0R^2}{2t} }\right].
\end{align*}
Let $h_1(t)=t^{-s} e^{ - \frac {\kap C_0R^2}{2t}}$ for $t>0$.
Then $t_1=\frac {\kap C_0R^2}{2s}=qR^2$ is the critical point and
\beqs
h_1(t_1)=(qR^2)^{-s} e^{ - \frac {\kap C_0}{2 q} } \ge \Big(\frac {2s}{e\kap C_0R^2}\Big)^s  . 
\eeqs
Letting $t=t_1$ in \eqref{domcomp}, we have
\beq
w(\x,t_1)\le \tilde W(\x,t_1) \le M \left[1- \eta \Big(\frac {2s}{e\kap C_0R^2}\Big)^s  \right]=M(1-\eta_0)\le \frac{M}{1+\eta_0}, 
\eeq
and, hence, \eqref{a11} follows.  
\end{proof}

Using Lemma  \ref{grow},  we show the decay, as $t\to\infty$, of solution $w(\x,t)$ of the IBVP \eqref{nonhom} in the homogeneous case, i.e., when $\fo\equiv 0$ and $G\equiv 0$.

\begin{proposition}[Homogeneous problem]\label{Edecay} 
Assume $w(\x,t)\in C_{\x,t}^{2,1}(D)\cap C(\bar{D}) $ satisfies
\beq\label{Lvanish}
\mathcal L w=0  \text{ in } D\quad\text{and} \quad  w= 0 \text { on } \Gamma\times(0,\infty).
\eeq
Then    
\beq\label{ubound}
 -e^{-\eta_1t}  \inf_U |w(\x,0)|   \le w(\x,t)  \le (1+\eta_0)e^{-\eta_1t}  \sup_U |w(\x,0)| \quad \forall (\x,t)\in D,   
\eeq
  where $\eta_1=\frac{\ln(1+\eta_0)}{qR^2}$. 
\end{proposition}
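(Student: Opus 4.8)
The plan is to iterate the lemma of growth (Lemma~\ref{grow}) on successive time slabs of length $qR^2$. First I would observe that the hypotheses \eqref{Lvanish} say precisely that $\mathcal L w = 0$ (so in particular $\mathcal L w \le 0$ and $\mathcal L(-w)\le 0$) and $w = 0$ on the lateral boundary $\Gamma\times(0,\infty)$; in particular $w \le 0$ and $-w \le 0$ on $\Gamma\times(0,\infty)$. Thus on each slab $U\times(kqR^2,(k+1)qR^2]$, shifted back to $U\times(0,qR^2]$, Lemma~\ref{grow} applies to both $w$ and $-w$. Set $m_k = \max\{0,\sup_U w(\x,kqR^2)\}$ and $m_k' = \max\{0,\sup_U(-w)(\x,kqR^2)\}$. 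Applying \eqref{a11} on the $k$th slab gives $m_{k+1}\le (1+\eta_0)^{-1} m_k$ and similarly $m_{k+1}'\le (1+\eta_0)^{-1} m_k'$, so by induction $m_k \le (1+\eta_0)^{-k} m_0$ and $m_k'\le (1+\eta_0)^{-k} m_0'$, where $m_0 \le \sup_U |w(\x,0)|$ and $m_0'\le \sup_U|w(\x,0)|$.

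Next I would convert this per-slab decay into the stated pointwise bound for all $t$. For $t\in[kqR^2,(k+1)qR^2]$, the maximum principle applied to $w$ on the slab $U\times(kqR^2,t]$ (with $\mathcal L w \le 0$, $w\le 0$ on $\Gamma$) bounds $w(\x,t)$ by the maximum of $w$ over the parabolic boundary, which is $\max\{m_k, 0\}=m_k$ (the lateral part contributing nothing positive). Hence $w(\x,t)\le m_k \le (1+\eta_0)^{-k}\sup_U|w(\x,0)|$. To turn $(1+\eta_0)^{-k}$ into the exponential $e^{-\eta_1 t}$ with $\eta_1 = \ln(1+\eta_0)/(qR^2)$, note $(1+\eta_0)^{-k} = e^{-k\ln(1+\eta_0)} = e^{-\eta_1 kqR^2}$; since $t\le (k+1)qR^2$, i.e. $kqR^2 \ge t - qR^2$, we get $(1+\eta_0)^{-k}\le e^{-\eta_1(t-qR^2)} = (1+\eta_0)e^{-\eta_1 t}$, which yields the upper bound in \eqref{ubound}. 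For the lower bound I apply the same argument to $-w$: $(-w)(\x,t)\le m_k'\le (1+\eta_0)^{-k}\sup_U|w(\x,0)|$, but here I would be slightly more careful and use the sharper estimate without the extra factor --- actually, re-examining: for the lower bound the statement has no factor $(1+\eta_0)$, so I would instead note that on each slab the maximum principle alone (not the lemma of growth) gives $(-w)(\x,t)\le m_k'$, and combine with $m_k'\le (1+\eta_0)^{-k}m_0'$; to kill the offset one can restart the slab decomposition so that the relevant slab ends exactly at a multiple, or simply observe that for $t\ge 0$ we may use $k=\lfloor t/(qR^2)\rfloor$ so that $kqR^2\le t$, giving $(1+\eta_0)^{-k}= e^{-\eta_1 kqR^2}\ge e^{-\eta_1 t}$, which goes the wrong way --- so the honest route is: $(-w)(\x,t) \le m_k'$ where now I bound $m_k'$ itself by iterating the lemma of growth $k$ times starting from slab $0$, obtaining $m_k' \le (1+\eta_0)^{-k}m_0'$, and since $t \le (k+1)qR^2$ we again only get the factor $(1+\eta_0)$; I would therefore present the lower bound with the same factor or simply note that $-\inf_U|w(\x,0)| e^{-\eta_1 t}$ as written follows because $w(\x,t) \ge -(-w)(\x,t) \ge -m_k$ where for the lower bound we can afford to be generous. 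In the write-up I would match exactly the asymmetry in \eqref{ubound} by treating the first slab $[0,qR^2]$ separately (direct maximum principle, no growth gain, giving $w\ge -\sup_U|w(\x,0)|\ge -e^{-\eta_1 t}\cdot$(something)$\ge$ hmm) --- cleanly: on $[0,qR^2]$, $e^{-\eta_1 t}\ge e^{-\eta_1 qR^2} = (1+\eta_0)^{-1}$, and $\inf_U|w(\x,0)|\cdot(1+\eta_0)^{-1}$ may be less than what the maximum principle gives, so the correct elementary fact to use is that the maximum principle gives $|w(\x,t)|\le \sup|w(\x,0)|$ on the first slab while $(1+\eta_0)e^{-\eta_1 t}\ge 1$ there, so the upper bound holds trivially on $[0,qR^2]$, and the lower bound $-e^{-\eta_1 t}\inf_U|w|$ is what needs the iterated estimate starting from slab $1$.

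The main obstacle is bookkeeping the slab indices and the two slightly different constants in the upper versus lower bounds of \eqref{ubound} --- in particular making sure the factor $(1+\eta_0)$ appears in the upper estimate but not the lower one, and handling the initial slab $[0,qR^2]$ where no exponential gain has yet accrued. There is no analytic difficulty: the work is entirely in Lemma~\ref{grow} and the classical maximum principle (valid here by \eqref{Aellip1}, \eqref{Abbound1}, \eqref{smoooth1}), both already established. I would organize the proof as: (1) reduce to iterating on slabs; (2) prove $m_k\le(1+\eta_0)^{-k}\sup_U|w(\x,0)|$ and the analogue for $-w$ by induction via Lemma~\ref{grow}; (3) interpolate within a slab by the maximum principle; (4) convert the geometric decay to the exponential form and reconcile the constants, treating $t\in[0,qR^2]$ separately for the lower bound.
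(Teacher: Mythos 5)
Your decomposition into time slabs of length $qR^2$, iterated application of Lemma~\ref{grow} to $\pm w$, and interpolation within a slab by the classical maximum principle is exactly the paper's argument, and your derivation of the upper bound in \eqref{ubound} is correct and complete.

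Where you went in circles on the lower bound, your instinct was right and you should not have talked yourself out of it. Applying the upper-bound argument to $-w$ on the slab $U\times(T_k,t]$, $t\in(T_k,T_{k+1}]$, yields
\beq
w(\x,t)\ge \min\big\{0,\inf_U w(\x,T_k)\big\}\ge -(1+\eta_0)^{-k}\sup_U|w(\x,0)|=-e^{-\eta_1 T_k}\sup_U|w(\x,0)|,
\eeq
and since only $T_k\ge t-qR^2$ is available (not $T_k\ge t$), the best one can extract is $w(\x,t)\ge -(1+\eta_0)e^{-\eta_1 t}\sup_U|w(\x,0)|$, symmetric with the upper bound. The asymmetric estimate written in \eqref{ubound} is not provable by this method, and in fact the paper's own chain \eqref{Lbound} contains two errors: the step $(1+\eta_0)^{-k}\min\{0,\inf_U w(\x,0)\}\ge -e^{-\eta_1 T_k}\inf_U|w(\x,0)|$ would require $\min\{0,\inf_U w(\x,0)\}\ge -\inf_U|w(\x,0)|$, which fails in general (indeed $w(\cdot,0)\in C(\bar U)$ vanishes on $\Gamma$ by continuity from $w=0$ on $\Gamma\times(0,\infty)$, so $\inf_U|w(\cdot,0)|=0$ and the stated lower bound would falsely assert $w\ge 0$); and the final step $-e^{-\eta_1 T_k}\cdots\ge -e^{-\eta_1 t}\cdots$ is reversed, since $T_k\le t$ gives $e^{-\eta_1 T_k}\ge e^{-\eta_1 t}$. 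In short, the $\inf$ should read $\sup$ and the factor $(1+\eta_0)$ belongs on both sides of \eqref{ubound}; none of your attempted slab re-indexings could have removed it, because the obstruction is real. The clean way to present the proof is exactly what you did for the upper bound, applied once to $w$ and once to $-w$.
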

\begin{proof}
Let $k\in \N$.
Applying Lemma~\ref{grow} with $D_1$ being replaced by $U\times (T_{k-1},T_k]$ gives 
\beqs
 \max\{0, \sup_U w(\x,kqR^2) \}\le  \frac 1 {1+\eta_0} \max\{ 0,\sup_U w(\x,(k-1)qR^2) \}. 
\eeqs 
By induction in $k$, we obtain  
\beq\label{w+} 
   \max\{ 0,\sup_U w(\x,kqR^2)\} \le \frac 1 {(1+\eta_0)^k} \max\{ 0,\sup_U w(\x,0)\}. 
\eeq 
Now applying \eqref{w+} to function $-w$ instead of $w$, we obtain  
 \beq\label{w-} 
  \min\{ 0, \inf_U w(\x,kqR^2)\} \ge \frac 1 {(1+\eta_0)^k} \min\{ 0,\inf_U w(\x,0)\}. 
\eeq 

For any $t>0$, there is an integer $k\ge 0$ such that  $t\in (T_k, T_{k+1}]$ where
$ T_k =k qT^2 $. By \eqref{Lvanish} and maximum principle for domain $U\times(T_k,T_{k+1}]$, and then using \eqref{w+} we have 
\begin{align}\label{Rbound}
w(\x,t)& \le \max\{ 0,\sup_U w(\x,T_k)\} \le (1+\eta_0)^{-k}\max\{ 0,  \sup_U w(\x,0)\}\nonumber\\
&=(1+\eta_0)e^{-\eta_1 T_{k+1}}\sup_U |w(\x,0)|\le(1+\eta_0)e^{-\eta_1t}\sup_U |w(\x,0)|.
\end{align}
Similarly, using \eqref{w-} instead of \eqref{w+} we have
\begin{align}\label{Lbound}
w(\x,t) &\ge \min\{ 0,\inf_U w(\x,T_k)\} \ge (1+\eta_0)^{-k}\min\{ 0, \inf_U w(\x,0)\}\nonumber\\
&\ge -e^{-\eta_1 T_k}\inf_U |w(\x,0)|
\ge -e^{-\eta_1t}\inf_U |w(\x,0)|.
\end{align}
Therefore, \eqref{ubound} follows \eqref{Rbound} and \eqref{Lbound}.
\end{proof}


Next, we consider the non-homogeneous case for the IBVP \eqref{nonhom}. Similar to \eqref{datacond}, we always consider
\beq \label{wdata}
w_0\in C(\bar U),\  G\in C(\Gamma\times[0,\infty))\text{ and } w_0(\x)=G(\x,0) \text{ on }\Gamma.
\eeq

\begin{proposition}[Non-homogeneous problem] \label{Allt}
Assume $f_0\in C(\bar D)$ and  
\beq\label{smalldata}
\deltao \eqdef \sup_{U\times (0,\infty) } |\fo(\x,t)| +\sup_{\Gamma\times (0,\infty)} |G(\x,t)|   <\infty
\eeq
There is a positive constant $C$ such that 
if $w(\x,t)\in C_{\x,t}^{2,1}(D)\cap  C(\bar D)$ is a solution of \eqref{nonhom}, then
\beq\label{main0}
| w(\x,t)| \le C\big[ e^{-\eta_1t}\sup_U |w_0(\x)|+  \deltao\big] \quad \forall (\x,t)\in D,
\eeq
where  $\eta_1>0$ is defined in Proposition \ref{Edecay}. 
\end{proposition}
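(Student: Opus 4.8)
The plan is to combine the exponential decay from the homogeneous problem (Proposition \ref{Edecay}) with a steady barrier that absorbs the non-homogeneous data $\fo$ and $G$. First I would construct an auxiliary time-independent supersolution $Z(\x)$ for the operator $\mathcal L$ on $\mscrU$, depending only on the ellipticity and boundedness constants $C_0,C_1,C_2$ from \eqref{Aellip1} and \eqref{Abbound1}. The natural choice is a function of $|\x|$ of the form $Z(\x)=A - B\int_{r_0}^{|\x|} r\phi(r)\,dr$ or, more simply, $Z(\x)=A(R^2-|\x|^2)+A'$ with $A,A'>0$ chosen so that $-\nabla\cdot(\Am\nabla Z)-\mathbf b\cdot\Am\nabla Z \ge 1$ on $\mscrU$ and $Z\ge 1$ on $\bar\mscrU$; using $\nabla(R^2-|\x|^2)=-2\x$ together with \eqref{Bxx} gives $\Am\nabla Z = -2A\x/\text{(scalar)}$, so the divergence and drift terms are explicitly bounded below by a positive constant times $A$ once $A$ is large relative to $C_0,C_1,C_2,R,n$. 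The upshot is a function $Z\in C^2(\bar\mscrU)$, bounded above and below by positive constants $\underline Z\le Z\le \overline Z$ depending only on the fixed data, with $\mathcal L Z\ge 1$ in $\mscrD$.

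Next I would split $w = w^{(h)} + w^{(p)}$, or rather directly run a comparison argument. Set $\delta=\deltao$. Consider the function $\Phi(\x,t) = (1+\eta_0)e^{-\eta_1 t}\sup_U|w_0| + \delta\, Z(\x)/\!\min_{\bar\mscrU} Z \cdot (\text{const})$; more precisely let $\Phi(\x,t) = (1+\eta_0)e^{-\eta_1 t}\sup_U|w_0| + \delta\,Z(\x)$. Then $\mathcal L\Phi = (1+\eta_0)\sup_U|w_0|\,\mathcal L(e^{-\eta_1 t}) + \delta\,\mathcal L Z$. The first term is $\mathcal L$ applied to a spatially-constant exponential, which equals $-\eta_1 e^{-\eta_1 t}$ times the constant — this is $\le 0$, which is the wrong sign, so I would instead use for the homogeneous part a genuine supersolution built exactly as in Proposition \ref{Edecay}: that proposition already shows that the solution $w^{(h)}$ of the homogeneous IBVP with data $w_0, G\equiv 0$ satisfies the two-sided bound \eqref{ubound}. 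So write $w = w^{(h)} + w^{(p)}$ where $w^{(p)}$ solves $\mathcal L w^{(p)} = \fo$ in $D$, $w^{(p)}=G$ on $\Gamma\times(0,\infty)$, $w^{(p)}(\cdot,0)=0$. Proposition \ref{Edecay} handles $w^{(h)}$; for $w^{(p)}$ I apply the maximum principle comparing $w^{(p)}$ with $\pm\delta Z(\x)$: since $\mathcal L(\delta Z)\ge \delta \ge |\fo|$ in $D$, $\delta Z \ge \delta\underline Z \ge \delta\ge |G|$ on $\Gamma\times(0,\infty)$ (after scaling $Z$ so $\underline Z\ge 1$), and $\delta Z\ge 0 = w^{(p)}(\cdot,0)$ on the base, the parabolic maximum principle gives $|w^{(p)}(\x,t)|\le \delta Z(\x)\le \delta\overline Z$ on $\bar D$. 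Adding the two estimates yields \eqref{main0} with $C = \max\{1+\eta_0, \overline Z\}$ (and $\inf_U|w_0|\le \sup_U|w_0|$ absorbs the lower bound from \eqref{ubound}).

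I would present this cleanly as: (1) state and verify the existence of the steady supersolution $Z$ via the explicit computation with $R^2-|\x|^2$; (2) invoke Proposition \ref{Edecay} for the homogeneous component; (3) run the comparison principle for the particular component against $\pm\delta Z$; (4) add and collect constants. The main obstacle — really the only non-routine point — is getting the signs and the scaling of $Z$ exactly right so that simultaneously $\mathcal L Z\ge 1$ in the interior \emph{and} $Z\ge 1$ on $\Gamma$ with $Z$ still bounded above independently of $U$ (only through $R$, $r_0$, $n$, $C_0$, $C_1$, $C_2$); this requires choosing the leading coefficient $A$ large enough for the differential inequality but then the additive constant $A'$ must dominate on the boundary, and one must check $\Am\nabla Z$ is controlled — which is exactly where the identity \eqref{Bxx}, $\Bm(\x)\x = \phi(|\x|)\x$, and the bounds \eqref{phiest0}--\eqref{phiest1} (equivalently \eqref{Cone}--\eqref{Czero}) are used, so that $\Am\nabla Z$ is a scalar multiple of $\x$ with scalar pinched between $2A/C_0$ and $2A/C_1$. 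Everything else is the standard parabolic maximum principle, which is available here because of \eqref{Aellip1} and \eqref{Abbound1}.
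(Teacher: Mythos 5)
Your overall strategy --- decompose $w = w^{(h)} + w^{(p)}$, estimate $w^{(h)}$ via Proposition~\ref{Edecay}, and bound $w^{(p)}$ by $\deltao Z$ for a steady supersolution $Z$ --- is a classical approach that can be made to work and is genuinely different from the paper's. The paper instead ``tilts'' $w$ in time on each slab $[T_{k-1},T_k]$ with $T_k = kqR^2$: setting $v_k = w - \deltao(t - T_{k-1}+1)$ makes $\mathcal L v_k = \fo - \deltao \le 0$ with $v_k\le 0$ on $\Gamma$, so Lemma~\ref{grow} applies slab-by-slab and the resulting recursion is summed as a geometric series. No steady barrier and no decomposition of $w$ are needed.

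There is, however, a genuine gap in your construction of $Z$. For both radial candidates you propose --- $Z = A - B\int_{r_0}^{|\x|} r\phi(r)\,dr$, which by \eqref{gradphi} gives $\Am\nabla Z = -B\x$, and $Z = A(R^2-|\x|^2)+A'$, which by \eqref{Bxx} gives $\Am\nabla Z = -2A\x/\phi(|\x|)$ --- the quantity
$\mathcal L Z = -\nabla\cdot(\Am\nabla Z) - \mathbf b\cdot\Am\nabla Z$
has a divergence piece that is positive (e.g.\ $Bn$ in the first case) but a drift piece $-\mathbf b\cdot\Am\nabla Z$ of \emph{indefinite sign}: by \eqref{vecb}, $\mathbf b = \lambda(|\x|)\x$ with $\lambda$ unsigned, and the only control is $|\mathbf b|\le C_2$, so one gets $\mathcal L Z \ge B(n - C_2 R)$ in the first case, which is negative when $C_2R>n$. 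Crucially, both $Z$'s are \emph{linear} in the free parameter $A$ (resp.\ $B$), so $\mathcal L Z$ scales linearly in that parameter; enlarging $A$ therefore cannot flip the sign. The sentence ``the divergence and drift terms are explicitly bounded below by a positive constant times $A$ once $A$ is large'' is precisely where the argument fails: it is linear in $A$ with a coefficient that need not be positive.

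The fix is the standard one: use an exponential barrier so the second-order part carries a $\lambda^2$ and dominates the first-order drift's $\lambda$. Take $Z(\x) = e^{\lambda(R+1)} - e^{\lambda x_1}$. Then
$\mathcal L Z = \lambda^2 e^{\lambda x_1} a_{11} + \lambda e^{\lambda x_1}\bigl[\sum_i\partial_i a_{i1} + (\Am\mathbf b)_1\bigr]$,
and since $a_{11} = \vece_1^T\Am\vece_1 \ge 1/C_0 > 0$ by \eqref{Aellip1}, taking $\lambda$ large (depending on $C_0$, $\|\Am\|_{C^1(\bar\mscrU)}$, $C_2$) gives $\mathcal L Z \ge 1$ on $\bar\mscrU$ while $Z$ stays pinched between positive constants. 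With that $Z$ (normalized so $Z\ge 1$ on $\Gamma$), your comparison $|w^{(p)}|\le\deltao Z$ and the rest of the argument go through. One smaller point: the split requires $w^{(h)}$ and $w^{(p)}$ to exist as classical solutions so that Proposition~\ref{Edecay} (an a~priori estimate) can be invoked; this follows from Theorem~\ref{eu} under \condEa, but should be said explicitly.
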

\begin{proof}  
Denote $ T_k =k qR^2 $ for any integer $k\ge 0$.
Let $k\in \N$ and 
\beq\label{v-w-rel}
 v_k(\x,t) = w(\x,t) - \deltao(t-T_{k-1}+1) \quad \text{ for }(\x,t)\in \bar U\times [T_{k-1},T_k].
\eeq
Then $v_k$ satisfies
\beqs
\mathcal L v_k =\mathcal L w  -\deltao  \mathcal = \fo -\deltao \le  0 \quad \text{in } U\times (T_{k-1},T_k],
\eeqs
and
\beqs
v_k(\x,t)\le 0 \text { on } \Gamma\times(T_{k-1},T_k).
\eeqs
Applying Lemma~\ref{grow} to function $v_k$, we have
 \beq
\max\{0,\sup_U v_k(\x,T_k)\} \le \frac 1{1+\eta_0} \max\{0, \sup_U v_k(\x,T_{k-1})\}.
\eeq
Note that $v_k(\x,T_k)=w(\x,T_k)-\deltao(qR^2+1)$ and $v_k(\x,T_{k-1})=w(\x,T_{k-1})-\deltao\le w(\x,T_{k-1})$.
Hence,
\begin{align*}
\max\{0,\sup_U w(\x,T_k)\} 
&\le \max\{0,\sup_U v_k(\x,T_k)\}  + \deltao(qR^2+1)\\
&\le \frac 1{1+\eta_0}  \max\{0,\sup_U w(\x,T_{k-1})\} + \deltao(qR^2+1).
\end{align*}
Iterating this inequality gives
 \beq\label{reqursiv-w}
 \begin{aligned}
\max\{ 0,\sup_U w(\x,T_k)\}& \le \frac 1{(1+\eta_0)^k}  \max\{0,\sup_U  w(\x,0)\} 
+ \deltao(qR^2+1) \sum_{j=0}^{k-1}\frac{1}{(1+\eta_0)^j}\\
 &\le \frac 1{(1+\eta_0)^k} \max\{0, \sup_U  w(\x,0)\} + \frac{\deltao(1+qR^2)(1+\eta_0)}{\eta_0}. 
\end{aligned}  
\eeq


By using the relation \eqref{v-w-rel} between $v_k(\x,t)$ and $w(\x,t)$, maximum principle for function $v_k(\x,t)$, and estimate \eqref{reqursiv-w},  we have for any $t\in [T_{k-1},T_k]$ with $k\ge 1$ that  
\begin{align*}
 w(\x,t) 
 & \le v_k(\x,t)+\deltao(1+qR^2)
\le \max\{ 0,\sup_U w(\x,T_{k-1})\} +\deltao(1+qR^2)\\
& \le (1+\eta_0)^{-k+1}\max\{ 0,  \sup_U  w(\x,0)\}+\frac{\deltao(1+qR^2)(1+\eta_0)}{\eta_0}+\deltao(1+qR^2)\\
&\le(1+\eta_0)^{-\frac t{qR^2}+1}\sup_U | w(\x,0)|+\frac{2\deltao(1+qR^2)(1+\eta_0)}{\eta_0}.
\end{align*}
Therefore,
\beq
w(\x,t) \le C\big[ e^{-\eta_1 t} \sup_U | w(\x,0)|+\deltao \big].
\eeq
Similarly, we obtain the same estimate for $(-w)$ and hence, \eqref{main0} follows.
\end{proof}

For the asymptotic behavior of $w(\x,t)$ as $t\to\infty$, we have the following.
\begin{corollary}\label{Larget} 
Assume $f_0\in C(\bar D)$ is bounded and 
\beq 
\deltatw \eqdef \limsup_{t\to\infty} \left[\sup_{\x\in U}|\fo(\x,t))| +\sup_{\x\in\Gamma}|G(\x,t)|\right]<\infty.
\eeq
There exists $C=C(\eta_0,q,R,M)>0$ such that if $w(\x,t)\in C_{\x,t}^{2,1}(D)\cap  C(\bar D)$  solves \eqref{nonhom},
then
\beq\label{limsup0}
\limsup_{t\to\infty} \left[\sup_{\x\in U}|w(\x,t)|\right] \le C\deltatw.
\eeq 
\end{corollary}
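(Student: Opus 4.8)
The plan is to derive the corollary from Proposition \ref{Allt} via a time-translation argument. Before shifting time I would first record that $w$ is bounded on $\bar D$. Indeed $\fo$ is bounded by hypothesis, and since $\Gamma$ is compact, $G\in C(\Gamma\times[0,\infty))$ and $\limsup_{t\to\infty}\sup_{\Gamma}|G(\cdot,t)|\le\deltatw<\infty$, the boundary term $\sup_{\Gamma}|G(\cdot,t)|$ is also bounded on $[0,\infty)$; hence the constant $\deltao$ of \eqref{smalldata} is finite and Proposition \ref{Allt} yields
\[
M_0:=\sup_{D}|w|<\infty .
\]

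Next, fix $\varepsilon>0$ and pick $T_\varepsilon>0$ with
\[
\sup_{\x\in U}|\fo(\x,t)|+\sup_{\x\in\Gamma}|G(\x,t)|<\deltatw+\varepsilon\qquad\text{for all }t\ge T_\varepsilon ,
\]
which is possible by the definition of $\deltatw$. Because the coefficients $\Am$ and $\mathbf b$ of $\mathcal L$ depend on $\x$ only, the operator $\mathcal L$ is autonomous in $t$; consequently, for any $\tau\ge T_\varepsilon$ the shifted function $\tilde w(\x,t):=w(\x,t+\tau)$ still belongs to $C^{2,1}_{\x,t}(D)\cap C(\bar D)$ and solves a problem of type \eqref{nonhom} with data $\fo(\cdot,\cdot+\tau)$, $G(\cdot,\cdot+\tau)$ and $\tilde w_0(\x)=w(\x,\tau)$, the compatibility condition on $\Gamma\times\{0\}$ being inherited from that of $w$. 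The data constant for this shifted problem satisfies
\[
\sup_{U\times(0,\infty)}|\fo(\x,t+\tau)|+\sup_{\Gamma\times(0,\infty)}|G(\x,t+\tau)|
=\sup_{U\times(\tau,\infty)}|\fo|+\sup_{\Gamma\times(\tau,\infty)}|G|\le\deltatw+\varepsilon .
\]

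Applying Proposition \ref{Allt} to $\tilde w$, with $C$ the constant furnished there (depending only on the structural parameters $\eta_0,q,R$), gives
\[
|w(\x,t+\tau)|\le C\Big[e^{-\eta_1 t}\sup_U|w(\x,\tau)|+\deltatw+\varepsilon\Big]\le C\Big[e^{-\eta_1 t}M_0+\deltatw+\varepsilon\Big]
\]
for all $\x\in U$, $t>0$ and $\tau\ge T_\varepsilon$, where $\eta_1$ is as in Proposition \ref{Edecay}. Taking $\tau=T_\varepsilon$, writing $s=t+T_\varepsilon$, then taking $\sup_{\x\in U}$ and finally $\limsup_{s\to\infty}$ makes the transient term $e^{-\eta_1(s-T_\varepsilon)}M_0$ vanish, so that $\limsup_{s\to\infty}\sup_{\x\in U}|w(\x,s)|\le C(\deltatw+\varepsilon)$. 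Letting $\varepsilon\downarrow 0$ yields \eqref{limsup0}.

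The argument is essentially routine given Proposition \ref{Allt}; the only points needing attention are the a priori bound $M_0$, which is what lets one control the transient term $e^{-\eta_1 t}\sup_U|w(\cdot,\tau)|$ uniformly over the shifts $\tau$, and the observation that $\mathcal L$ is time-autonomous, so that time-translates of solutions again solve problems of the same form. I do not anticipate a genuine obstacle here.
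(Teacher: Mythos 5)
Your proposal is correct and follows essentially the same route as the paper: first use Proposition \ref{Allt} to get a uniform bound on $w$, then, for each $\varepsilon>0$, apply Proposition \ref{Allt} again on the shifted domain $U\times(T_\varepsilon,\infty)$ and pass to the limit. Your explicit remark that $\mathcal L$ is autonomous in $t$ merely spells out what the paper leaves implicit when it "applies the lemma to the domain $U\times(t_0,\infty)$."
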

\begin{proof}
Note that 
\beqs
\sup_{U}|w_0(\x)|+ \sup_{D} |\fo(\x,t)| +\sup_{\Gamma\times (0,\infty)} |G(\x,t)| <\infty.
\eeqs
Then by Proposition \ref{Allt}, $w(\x,t)$ is bounded on $\bar D$.
Let $\varep>0$. From our assumption there is $t_0>0$ such that
$$
\sup_{U\times[t_0,\infty)}|\fo(\x,t))| +\sup_{\Gamma\times[t_0,\infty)}|G(\x,t)|< \deltatw+\varep.
$$
Applying Lemma~\ref{Allt} to the domain $U\times (t_0,\infty)$ we obtain  
\beq\label{small-w}
 |w(\x,t)| \le C[e^{-\eta_1(t-t_0)}\sup_{\x\in U} |w(\x,t_0)|+ \deltatw+\varep].
\eeq
Therefore, passing $t\to\infty$ and then $\varep\to 0$ in \eqref{small-w} yields \eqref{limsup0}. 
\end{proof}

Next, we estimate $|\nabla w(\x,t)|$ by using Bernstein's technique (c.f. \cite{IKO}).

\begin{proposition} \label{gradw} 
Assume $f_0\in C(\bar D)$, $\nabla f_0 \in C(D)$, \eqref{smalldata} and 
\beq
\deltafo \eqdef \sup_{D} |\nabla f_0|<\infty.
\eeq  
For  any $U'\Subset U$ there is $\tilde M>0$ such that 
if $w(\x,t)\in C_{\x,t}^{2,1}(D)\cap  C(\bar D)$ is a solution of \eqref{nonhom} that also satisfies $w\in C_{\x}^3(D)$ and $w_t\in C_{\x}^1(D)$,
then
\beq\label{Gradbound}
 |\nabla w (\x,t) |  \le  \tilde M\Big[1+\frac 1{\sqrt t}\Big] \Big[e^{-\eta_1t}\sup_U |w(\x,0)|+\deltao+ \sqrt \deltafo\Big]
\quad \forall (\x,t)\in U'\times (0,\infty).
\eeq
\end{proposition}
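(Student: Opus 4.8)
The plan is to adapt the classical Bernstein a priori gradient estimate for linear parabolic equations (as in \cite{IKO}) to equation \eqref{neweq}, exploiting that on the interior subdomain $U'$ the coefficient matrix $\Am$ and the vector field $\mathbf b$ are bounded, uniformly elliptic and of class $C^1(\bar\mscrU)$ by \eqref{Aellip1}, \eqref{Abbound1} and \eqref{smoooth1}. First I would fix a cutoff function $\zeta\in C_{\x}^\infty(\R^n)$ with $0\le\zeta\le 1$, $\zeta\equiv 1$ on $U'$ and $\operatorname{supp}\zeta\Subset U$, and introduce the auxiliary function
\beq\label{BernAux}
z(\x,t) = \zeta^2(\x)\,|\nabla w(\x,t)|^2 + \mu\, w(\x,t)^2 + \nu\, t\, |\nabla w(\x,t)|^2 ,
\eeq
or a variant thereof, where $\mu,\nu>0$ are large constants to be chosen. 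The presence of the factor $t$ in the last term is what produces the $1/\sqrt t$ blow-up near $t=0$ allowed by \eqref{Gradbound}, and lets us avoid assuming differentiability of the initial data; the $\mu w^2$ term is used to absorb cross terms and to bring in the $L^\infty$ bound of $w$ already controlled by Proposition \ref{Allt}.

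Next I would differentiate the PDE $\mathcal L w = \fo$ in each $x_k$, multiply by $\partial_{x_k} w$ and sum, obtaining a parabolic inequality for $|\nabla w|^2$ of the schematic form $\partial_t|\nabla w|^2 - \nabla\cdot(\Am\nabla|\nabla w|^2) + 2(\nabla w)^T(\Am)(\nabla w)_{\x\x}\cdot(\ldots) \le C|\nabla w|^2 + C|\nabla\fo||\nabla w|$, where the key gain is the good second-order term $-2\,\partial_{x_k}\!\big(\nabla w\big)^T\Am\,\partial_{x_k}\!\big(\nabla w\big)\le -\tfrac{2}{C_0}|D^2 w|^2$ coming from uniform ellipticity \eqref{Aellip1}. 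Applying $\mathcal L$ (or rather its principal part plus lower-order terms) to $z$, the dangerous terms are (a) the commutator of $\nabla$ with $\nabla\cdot(\Am\nabla\cdot)$, which produces first derivatives of $\Am$ times $D^2 w$ times $\nabla w$ — controlled because $\Am\in C^1$ — and (b) terms $\zeta\,\nabla\zeta\cdot(\ldots)\,|D^2w|\,|\nabla w|$ from differentiating the cutoff, and $\nu|\nabla w|^2$ from $\partial_t(t|\nabla w|^2)$. All of these are absorbed into the negative $|D^2 w|^2$ term by Cauchy--Schwarz with $\varepsilon$, at the cost of choosing $\mu$ large (its $-\mu w^2$-generated good term $-2\mu(\nabla w)^T\Am\nabla w\le -\tfrac{2\mu}{C_0}|\nabla w|^2$ soaks up the leftover $|\nabla w|^2$) and using the forcing bound $|\nabla\fo|\le\deltafo$ together with $|\fo|\le\deltao$. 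The upshot is that with $\mu,\nu$ chosen depending only on $C_0,C_1,C_2,\zeta$, one gets $\mathcal L z \le C(\deltafo + \deltao^2 + \ldots)$ in $U\times(0,\infty)$, i.e.\ $z$ is a subsolution up to an explicit constant.

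Then I would apply the maximum principle to $z - C t(\deltafo+\ldots) - (\text{const})$ on the cylinder $U\times(0,T]$: on the lateral boundary $\zeta=0$ so $z = \mu w^2 + \nu t|\nabla w|^2$, but $|\nabla w|$ there is not controlled — this is the one genuine subtlety, and it is handled the standard way by either (i) working on a slightly larger cutoff so that on $\operatorname{supp}\nabla\zeta$ we still have room, making $z$ vanish on $\partial U$ after all by taking $\zeta\equiv 0$ near $\Gamma$, so that on $\Gamma\times(0,T]$ one has $z=\mu G^2 \le \mu\deltao^2$; and (ii) on the bottom $t=0$, $z(\x,0)=\zeta^2|\nabla w_0|^2+\mu w_0^2$, but since we do \emph{not} want to assume $\nabla w_0$ bounded, this is exactly why the $\nu t$ term (not $\zeta^2$ alone) multiplies $|\nabla w|^2$ in the part that must survive at $t=0$: I would instead use $z=\nu t\,\zeta^2|\nabla w|^2+\mu w^2$ so that $z(\x,0)=\mu w_0^2$, clean. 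Carrying the maximum principle through yields $\nu t\,|\nabla w|^2 \le z \le C(e^{-2\eta_1 t}\sup_U|w_0|^2 + \deltao^2 + \deltafo)$ on $U'\times(0,T]$ with $C$ independent of $T$, using Proposition \ref{Allt} to bound $\sup w^2$. Taking square roots, dividing by $\sqrt{\nu t}$, and also noting $|\nabla w|$ is bounded for $t\ge 1$ directly (combine the $\nu t$ estimate with $t\ge1$) to replace $1/\sqrt t$ by $1+1/\sqrt t$, gives \eqref{Gradbound} with $\tilde M$ depending on $U'$, the ellipticity/boundedness constants, and $M$. The main obstacle is the bookkeeping in step two: correctly identifying which first-order-coefficient-of-$\Am$ terms appear after differentiating $\nabla\cdot(\Am\nabla w)$ and verifying they are all quadratically dominated by the good term $|D^2w|^2$ so that the constants $\mu,\nu$ can indeed be fixed independently of $T$; everything else is the routine Bernstein/maximum-principle machinery.
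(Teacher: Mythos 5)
Your framework is the right one — Bernstein's technique with a cutoff, a vanishing-at-$t=0$ time weight, and the maximum principle — and that matches the paper's approach. But there is a genuine gap in the final step. You apply the maximum principle to $z=\nu t\,\zeta^2|\nabla w|^2+\mu w^2$ on the \emph{global} cylinder $U\times(0,T]$, and you claim this yields $\nu t|\nabla w|^2\le z\le C\big(e^{-2\eta_1 t}\sup_U|w_0|^2+\deltao^2+\deltafo\big)$ ``using Proposition \ref{Allt} to bound $\sup w^2$.'' That last step does not work: the maximum principle bounds $z(\x,t)$ by its supremum over the parabolic boundary plus an accumulated forcing term, and the piece of $\partial_p(U\times(0,T])$ at $t=0$ contributes $z(\cdot,0)=\mu w_0^2$, i.e.\ the \emph{constant} $\mu\sup_U|w_0|^2$, with no time decay. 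Proposition \ref{Allt} bounds $w(\x,t)^2$ at the evaluation point, but that is not what enters the maximum-principle bound on $z(\x,t)$. So the best your argument can give is $\nu t|\nabla w|^2\lesssim \sup_U|w_0|^2 + t(\deltafo+\deltao^2)+\cdots$, hence $|\nabla w|^2\lesssim t^{-1}\sup_U|w_0|^2+\deltafo+\deltao^2$. The $\sup_U|w_0|$ contribution decays only like $1/\sqrt t$, not like $e^{-\eta_1 t}$, which is strictly weaker than \eqref{Gradbound} and insufficient for the decay statement \eqref{limvelo} in Theorem \ref{mainv} that builds on it.

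The missing ingredient is \emph{time localization}. The paper runs Bernstein's argument on sliding cylinders $B_\delta(\x_*)\times(t_0,t_0+1]$ of unit length, with time weight $\tau=t-t_0\in(0,1]$ (so the weight never grows), and with the forcing absorber $N_1(1+t_0-t)$ instead of a $Ct$ subtraction. On such a cylinder the ``initial data'' in the maximum principle is $w(\cdot,t_0)$, whose sup-norm is already known to satisfy $\sup_U|w(\cdot,t_0)|\le C\big[e^{-\eta_1 t_0}\sup_U|w_0|+\deltao\big]$ by Proposition \ref{Allt}. Choosing $t_0=t/2$ for $t\le 1$ and $t_0=t-1/2$ for $t>1$ then transfers the exponential decay at time $t_0$ to the final bound at time $t$, up to bounded multiplicative factors, and the bounded weight $\tau$ together with $\Phi(\x)=(\delta^2-|\x-\x_*|^2)^2$ (vanishing on $\partial B_\delta(\x_*)$) also keeps the $\nabla\fo$-contribution bounded by $\deltafo$ rather than $t\deltafo^2$. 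A finite covering of $U'$ by such balls then yields \eqref{Gradbound}. Your interior analysis (absorption of the commutator and cutoff cross terms via $|D^2w|^2$, the role of $\mu w^2$) is sound and mirrors the paper; the correction needed is to trade your global cutoff $\zeta$ and global weight $\nu t$ for the paper's local-in-space-and-time barrier, and to feed the sup-bound at the sliding start time $t_0$, not at $t=0$, into the maximum principle.
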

\begin{proof}
Note that $\nabla w\in C_{\x,t}^{2,1} (D)$. By using finite covering of compact set $U'$, it suffices to prove  \eqref{Gradbound} for $\x$ in some ball inside $U$. 
Consider a ball $B_{{\delta}}(\x_*) =\{\x:|\x-\x^*|\le \delta \}\Subset U$ with some $\x_*\in U$ and $\delta>0$. Let $t_0>0$, define in the cylinder $G_{\delta} \eqdef B_{\delta}(\x_*) \times (t_0, 1+t_0]$ the following auxiliary function 
\beq 
\tilde w(\x,t) = \tau\Phi(\x) |\nabla w|^2 + N w^2 +N_1 (1+t_0-t),
\eeq 
where 
\beq
\tau= t-t_0\in (0,1], \quad \Phi(\x) = (\delta^2- |\x-\x_*|^2)^2.
\eeq
The numbers $N, N_1\ge 0$ will be chosen later. 
We rewrite the operator $\mathcal L$ as
\beq
\mathcal L w=w_t - \sum_{i,j=1}^n a_{ij}(\x) \partial_i\partial_j w - \mathbf{\tilde b}\cdot \nabla w,
\eeq
where $\mathbf{\tilde b}(\x)=(\tilde b_1,\tilde b_2,\ldots,\tilde b_n) \eqdef \nabla \cdot \Am+\Am\mathbf b$. 
Then following the calculations in Theorem 1 of section 2 on page 450 in \cite{IKO} we have 
\beq\label{Ltilw}
\begin{aligned}
\mathcal L \tilde w&\le 
2\tau\Phi \Big\{ \sum_{i,j,k=1}^n \dx{a_{ij}}{x_k}\dx{w}{x_k} \ddx{w}{x_i}{x_j}  
         +   \sum_{i,k=1}^n \dx{\tilde b_{i}}{x_k}\dx{w}{x_k} \dx{w}{x_i}
 - \sum_{i,j,k=1}^n a_{ij}\ddx{w}{x_k}{x_i}\ddx{w}{x_k} {x_j}  \Big\}\\
             &\quad  -(\tau \mathcal L(\Phi) -\Phi)|\nabla w |^2 -4\tau \sum_{i,j,k=1}^n a_{ij}\dx{\Phi}{x_i}\dx{w}{x_k}\ddx{w}{x_k}{x_j}  
               - 2N\sum_{i,j=1}^n a_{ij}\dx{w}{x_i}\dx{w}{x_j}\\
               &\quad -2\tau\Phi\sum_{k=1}^n \frac{\partial\fo}{\partial x_k}- 2Nw  \fo   -N_1 .
\end{aligned}
\eeq
We estimate the right-hand side of \eqref{Ltilw} term by term. 
Let $\varep>0$. The numbers $K_i$, for $i=1,2,3\ldots$, used in the calculations below are all positive and independent of $w$.
We denote the matrix of second derivatives of $w$ by $\nabla^2 w$, and denote its Euclidean norm by $|\nabla^2 w|$.
Note that $ \Am$, $\mathbf b$ and $\mathbf{\tilde b}$ are bounded in $B_\delta(\x^*)$.  This and Cauchy-Schwarz inequality  imply
\begin{align*}
& 2\tau\Phi  \sum_{i,j,k=1}^n \dx{a_{ij}}{x_k}\dx{w}{x_k} \ddx{w}{x_i}{x_j}  
\le 2 C \tau\Phi  |\nabla w| |\nabla^2 w|^2
\le \varepsilon^{-1} K_1 |\nabla w|^2   +2 \varepsilon\tau\Phi   |\nabla^2 w|^2,\\
& -(\tau \mathcal L(\Phi) -\Phi)|\nabla w |^2+2\tau\Phi\sum_{i,k=1}^n \dx{\tilde b_{i}}{x_k}\dx{w}{x_k} \dx{w}{x_i}\le K_2|\nabla w|^2.
\end{align*}
Since $\Am$ is positive definite, 
\beqs
 \sum_{i,j,k=1}^n a_{ij}\ddx{w}{x_k}{x_i}\ddx{w}{x_k} {x_j} \ge K_3 |\nabla^2 w|^2,
\quad 
\sum_{i,j=1}^n a_{ij}\dx{w}{x_i}\dx{w}{x_j}\ge K_3 |\nabla w|^2. 
\eeqs
Also, we have
\beqs 
-4\tau \sum_{i,j,k=1}^n a_{ij}\dx{\Phi}{x_i}\dx{w}{x_k}\ddx{w}{x_k}{x_j}  \le \varepsilon^{-1} K_4 |\nabla w|^2   +2\varepsilon \tau |\nabla \Phi|^2 |\nabla^2 w|^2,
\eeqs
\beqs
-2\tau\Phi\sum_{k=1}^n \frac{\partial \fo}{\partial x_k} \le K_5\deltafo ,
\eeqs
and by using estimate \eqref{main0} for $w$,
\beqs
 - 2Nw \fo \le K_6\deltao N \big[e^{-\eta_1 t_0}\sup_U |w(\x,0)|+\deltao\big].
\eeqs
 Combining the above estimates, we obtain from \eqref{Ltilw} that 
\begin{align*}
\mathcal L \tilde w
&\le 2\tau \Phi\Big(2 \varep +\varep\frac{|\nabla \Phi|^2}\Phi - K_3 \Big) |\nabla^2 w|^2 
   +  \Big( K_2 + \varepsilon^{-1}(K_1+K_4) - 2N K_3 \Big)|\nabla w |^2\\
&\quad + K_5\deltafo + K_6\deltao N \big[e^{-\eta_1 t_0}\sup_U |w(\x,0)|+\deltao\big]-N_1 .
\end{align*}
Since   $|\nabla \Phi|^2/\Phi\le 16\delta^2$, we have
\beq\label{Ltildew}
\begin{aligned}
\mathcal L \tilde w
&\le 2\tau \Phi\Big(K_7\varep - K_3 \Big) |\nabla^2 w|^2  
   +  \Big( K_2 + K_8\varepsilon^{-1}- 2N K_3 \Big)|\nabla w |^2\\
&\quad + (K_5 + K_6 N)\big[\deltafo+ \deltao e^{-\eta_1 t_0}\sup_U |w(\x,0)|+\deltao^2\big]-N_1 .
\end{aligned}
\eeq
In \eqref{Ltildew}, choose $\varep=K_3/K_7$ and $N=[K_2+K_8\varep^{-1}]/(2 K_3)$,
then take 
\beqs 
N_1=(K_5 + K_6 N)(\deltao e^{-\eta_1 t_0}\sup_U |w(\x,0)|+\deltao^2+\deltafo).
\eeqs 
We find that 
$ \mathcal L\tilde w \le0$ in $G_{\delta}$.
 Applying the maximum principle gives
\beq\label{Bdnwtil}
\max_{\bar G_{\delta}} \tilde w = \max \big \{ \tilde w(\x,t): (\x,t)\in  B_{\delta}(\x_*)\times\{t_0\}
 \cup \partial B_{\delta}(\x_*) \times[t_0,t_0+1] \big \}. 
\eeq
Note that $\tau \Phi(\x)=0$ when $t=t_0$ or $\x\in\partial B_{\delta}( \x_*)  $.
Hence \eqref{Bdnwtil} implies, 
 \beq\label{Bdnwti2}
\max_{\bar G_{\delta}} \tilde w  \le  N \max_{B_{\delta}( \x_*) } w^2(\x,t_0)+ N\max_{\partial B_{\delta}(\x_*) \times[t_0,t_0+1]} w^2(\x,t)
  +N_1.
 \eeq
Using estimate \eqref{main0} for the first two terms on the right-hand side of \eqref{Bdnwti2} we obtain   
\begin{align*}
\max_{\bar G_{\delta}} \tilde w 
&\le 2K_9 N \Big[e^{-\eta_1 t_0}\sup_U |w(\x,0)|+\deltao\Big]^2 +N_1
\le K_{10}\Big[e^{-2\eta_1 t_0}\sup_U |w(\x,0)|^2+\deltao^2+\deltafo\Big]\\
&\le C \Big[e^{-2\eta_1 t}\sup_U |w(\x,0)|^2+\deltao^2+\deltafo\Big].
\end{align*}
Now, we consider $\x \in B_{\delta/2}(\x_*)$. 
If $t\in(0,1]$ let $t_0=t/2$, then $t=2t_0\in[t_0,1+t_0]$ and hence  
     \begin{multline}\label{berngradout1}
 \frac t {2}|\nabla w (\x,t) |^2\min_{B_{\delta/2}(\x_*) }\Phi(\x)   \le (t-t_0)\Phi(\x) |\nabla w(\x,t)|^2 \\
 \le \tilde w(\x,t)
 \le C \Big[e^{-2\eta_1t}\sup_U |w(\x,0)|+\deltao^2 +\deltafo \Big] .
 \end{multline}
If $t>1$ let $t_0=t-1/2$, then $t\in [t_0,1+t_0]$ and hence
\begin{multline}\label{berngradout2}
 \frac 1 2|\nabla w (\x,t) |^2\min_{B_{\delta/2}(\x_*) }\Phi(\x)   \le (t-t_0)\Phi(\x) |\nabla w(\x,t)|^2 \\
 \le  \tilde w(\x,t)
 \le   C\Big[e^{-2\eta_1t}\sup_U |w(\x,0)|+\deltao^2 +\deltafo \Big].
\end{multline}
Since $\min_{B_{\delta/2}(\x_*) }\Phi(\x)>0$, it follows \eqref{berngradout1} and \eqref{berngradout2} that  
\beq\label{fordel1}
|\nabla w(\x,t)|\le M(\delta) \Big[ 1+\frac1{\sqrt t}\Big]
\Big[e^{-\eta_1 t}\sup_U |w(\x,0)|+\deltao +\sqrt{\deltafo} \Big]
\eeq
for $\x\in B_{\delta/2}(\x_*)$ and $t>0$.
Then using a finite covering of $U'$, we obtain  \eqref{Gradbound} from \eqref{fordel1}.
\end{proof}

We return to the IBVP \eqref{sigEq} for $\sigma(\x,t)$ now. Recall that the existence and uniqueness of the solution $\sigma$ were already addressed in Theorem \ref{eu}.


\begin{theorem}\label{Bsig}
Assume \condEa\ and  
 \beq\label{smalldata1}
\deltath\eqdef \sup_{D} (|\mathbf V(\x,t)| + |\nabla\mathbf V(\x,t)|) +\sup_{\Gamma\times [0,\infty)} |g(\x,t)| <\infty.
\eeq 
Then the solution $\sigma(\x,t)$ of the IBVP \eqref{sigEq} satisfies 
\beq\label{Boundsig}
\sup_{\x\in U} |\sigma(\x,t)| \le C\Big[ e^{-\eta_1t}\sup_U |\sigma_0(\x)|+\deltath\Big] \quad \text{for all } t>0 .
\eeq
Moreover,  
\beq\label{sigsmall}
\limsup_{t\to\infty} \left[\sup_{\x\in U}|\sigma(\x,t)|\right]  \le C\deltathp,
\eeq 
where
\beq
\deltathp=\limsup_{t\to\infty}\Big[ \sup_{\x\in U} (|\mathbf V(\x,t)| + |\nabla\mathbf V(\x,t)|) +\sup_{\x\in \Gamma} |g(\x,t)|\Big].
\eeq
\end{theorem}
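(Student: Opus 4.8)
The plan is to deduce Theorem~\ref{Bsig} from the estimates already obtained for the transformed equation~\eqref{neweq} --- Proposition~\ref{Allt} and Corollary~\ref{Larget} --- by tracking how the substitution $w=e^{-\Lambda}\sigma$ converts the data of~\eqref{sigEq} into the data of~\eqref{nonhom}. Set $w(\x,t)=e^{-\Lambda(\x)}\sigma(\x,t)$ as in~\eqref{w-sig}; by the computation leading to~\eqref{neweq}, $w$ solves the IBVP~\eqref{nonhom} with
\[
\fo=e^{-\Lambda}\nabla\cdot(\Am\mathbf c),\qquad w_0=e^{-\Lambda}\sigma_0,\qquad G=e^{-\Lambda}g .
\]
First I would check the hypotheses of Propositions~\ref{Allt} and~\ref{Larget}: under \condEa, Lemma~\ref{smooth} gives $\Am,\Lambda\in C^7(\bar{\mscrU})$ and $\mathbf c\in C^6_{\x}(\bar D)$, so $\fo\in C(\bar D)$; Theorem~\ref{eu} gives $\sigma\in C(\bar D)\cap C^{2,1}_{\x,t}(D)$, hence the same holds for $w$; and~\eqref{datacond} together with the positivity and continuity of $e^{-\Lambda}$ on $\bar U$ yields $w_0\in C(\bar U)$, $G\in C(\Gamma\times[0,\infty))$ with $w_0=G(\cdot,0)$ on $\Gamma$, which is~\eqref{wdata}.

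Next I would bound the transformed data. By~\eqref{Lamest1} and~\eqref{Sassump}, $\Lambda$ is bounded on the closed annulus $\bar{\mscrU}$; writing $\Lambda_*=\sup_{\bar{\mscrU}}|\Lambda|<\infty$, we have $e^{-\Lambda_*}\le e^{\pm\Lambda(\x)}\le e^{\Lambda_*}$ there. Using $\mathbf c=F_1(S_*)\G'_1(\mathbf u_1^*)\mathbf V$ from~\eqref{cdef} and expanding $\nabla\cdot(\Am\mathbf c)$ by the product rule, one sees that $\mathbf V$ enters either undifferentiated or through $\nabla\mathbf V$ (no second derivatives of $\mathbf V$ appear), while every remaining factor --- $F_1(S_*)$, $F'_1(S_*)$, $\G'_1(\mathbf u_1^*)$ and its first $\x$-derivatives, $\Am$ and $\partial_i\Am$ (cf.~\eqref{derA}) --- is continuous, hence bounded, on $\bar{\mscrU}$ thanks to~\eqref{Sassump}, \eqref{Abbound1} and~\eqref{smoooth1}. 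Therefore $\sup_D|\fo|\le C\big(\sup_D|\mathbf V|+\sup_D|\nabla\mathbf V|\big)$ and $\sup_{\Gamma\times(0,\infty)}|G|\le e^{\Lambda_*}\sup_{\Gamma\times[0,\infty)}|g|$, so the quantity $\deltao$ in~\eqref{smalldata} satisfies $\deltao\le C\deltath$; the same argument applied to limits superior gives $\deltatw\le C\deltathp$ for the quantity $\deltatw$ of Corollary~\ref{Larget}.

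Then the conclusion is immediate. Applying Proposition~\ref{Allt} to $w$ gives $|w(\x,t)|\le C\big[e^{-\eta_1 t}\sup_U|w_0|+\deltao\big]\le C\big[e^{-\eta_1 t}\sup_U|\sigma_0|+\deltath\big]$, where the factors $e^{\pm\Lambda_*}$ have been absorbed into $C$. Multiplying by $e^{\Lambda(\x)}\le e^{\Lambda_*}$ and using $\sigma=e^{\Lambda}w$ yields~\eqref{Boundsig}. Likewise, Corollary~\ref{Larget} applied to $w$ gives $\limsup_{t\to\infty}\sup_U|w(\x,t)|\le C\deltatw\le C\deltathp$, and multiplying by $e^{\Lambda_*}$ gives~\eqref{sigsmall}.

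The one step requiring genuine care --- and the main obstacle --- is the uniform bound $\sup_D|\fo|\le C(\sup_D|\mathbf V|+\sup_D|\nabla\mathbf V|)$: one must verify that expanding $\nabla\cdot\big(\Am\,F_1(S_*)\G'_1(\mathbf u_1^*)\mathbf V\big)$ produces no term containing $\nabla^2\mathbf V$, and that all the purely $\x$-dependent coefficients remain bounded on $\bar{\mscrU}$ --- this is precisely where the annulus condition $r_0\le|\x|\le R$ and the non-degeneracy~\eqref{Sassump} (which keeps $\hat S$ away from the endpoints $0$ and $1$, where $F_i$ and $F'_i$ may be singular) are used. Everything else is routine bookkeeping of the change of variables $w\leftrightarrow\sigma$ and a direct appeal to Proposition~\ref{Allt} and Corollary~\ref{Larget}.
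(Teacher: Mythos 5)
Your proposal is correct and follows essentially the same route as the paper: transform via $w=e^{-\Lambda}\sigma$ to reduce~\eqref{sigEq} to the IBVP~\eqref{nonhom}, bound $|\Lambda|$ on $\bar{\mscrU}$ via~\eqref{Lamest1}, verify $|\fo|\le C(|\mathbf V|+|\nabla\mathbf V|)$, and then invoke Proposition~\ref{Allt} and Corollary~\ref{Larget}. The only difference is that you spell out in more detail why $\nabla\cdot(\Am\mathbf c)$ contains no $\nabla^2\mathbf V$ and why the coefficients are bounded on the closed annulus, which the paper leaves implicit.
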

\begin{proof}
Let $w(\x,t) = \sigma(\x,t) e^{-\Lambda(\x) }$, $\fo(\x,t)=e^{-\Lambda(\x)}\nabla\cdot(\Am(\x)\mathbf c(\x,t))$, $G(\x,t)=e^{-\Lambda(\x) }g(\x,t)$ and $w_0(\x)=e^{-\Lambda(\x)}\sigma_0(\x)$.
Then $w(\x,t)$ solves \eqref{nonhom}.
We observe from \eqref{Lamest1} that
\beq
|\Lambda(\x)|\le d_2\mu_3(R-r_0)\quad \forall \x\in \mscrU.
\eeq
Combining with the boundedness of $\|\Am\|_{C^1(\mscrU)}$, we have
\beq
|\fo(\x,t)|\le C(|\mathbf V(\x,t)|+|\nabla \mathbf V(\x,t)|)\quad \forall (\x,t)\in D.
\eeq
Thanks to these relations, the assumptions in Proposition~\ref{Allt} hold, thus, the assertions \eqref{Boundsig} and \eqref{sigsmall}  follow directly from \eqref{main0} and \eqref{limsup0}.        
\end{proof}

%


For the velocities,  we have the following result. 


\begin{theorem}\label{mainv}
Assume \condEa\ and 
 \beq\label{smalldata2}
 \deltafi \eqdef \sup_{D} (|\mathbf V(\x,t)| + |\nabla\mathbf V(\x,t)| +|\nabla^2\mathbf V(\x,t)|)<\infty
\text{ and }
\deltasi \eqdef \sup_{\Gamma\times [0,\infty)} |g(\x,t)|< \infty.
\eeq 
Then for any $U'\Subset U$, there is a positive number $\tilde M$ such that
for $i=1,2$, and $t>0$,
\beq\label{supv12} 
  \sup_{ \x\in U' } |\V_i(\x,t)|\le  \tilde M \Big(1+\frac 1{\sqrt t}\Big)\Big[e^{-\eta_1t}\sup_U |\sigma_0(\x)|+\deltafi +\sqrt \deltafi + \deltasi \Big].
 \eeq
Consequently, if 
 \beq
\lim_{t\to\infty}\Big \{ \sup_{\x\in U} (|\mathbf V(\x,t)| + |\nabla\mathbf V(\x,t)| +|\nabla^2\mathbf V(\x,t)|) +\sup_{\x\in\Gamma} |g(\x,t)|\Big\} =0, 
\eeq
then for any $\x\in U$,
\beq\label{limvelo}
\lim_{t\to\infty} \V_1(\x,t)=\lim_{t\to\infty} \V_2(\x,t)=0.
\eeq 
\end{theorem}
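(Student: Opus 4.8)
The plan is to read the phase velocities off the formulas already established, namely $\V_2 = e^{\Lambda}\Am\nabla w + \Am\mathbf c$ from \eqref{v2w} and $\V_1 = \mathbf V - \V_2$ from \eqref{v1}, and to control $|\nabla w|$ on interior subdomains by the Bernstein estimate of Proposition \ref{gradw}. As in the proof of Theorem \ref{Bsig}, I would put $w = e^{-\Lambda}\sigma$, $\fo = e^{-\Lambda}\nabla\cdot(\Am\mathbf c)$, $G = e^{-\Lambda}g$ and $w_0 = e^{-\Lambda}\sigma_0$, so that $w$ solves the IBVP \eqref{nonhom}. Note that \eqref{smalldata2} implies the hypotheses of Theorem \ref{Bsig}, so $\sigma$, and hence $w$, is bounded on $\bar D$.

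The first task is to verify that Proposition \ref{gradw} applies to $w$. Under \condEa, Lemma \ref{smooth} gives $\Am, \Lambda \in C^1(\bar{\mscrU})$ (in fact $C^7$), and the matrix factor $F_1(S_*)\G'_1(\U_1^*)$ in $\mathbf c = F_1(S_*)\G'_1(\U_1^*)\mathbf V$ is smooth in $\x$ on $\bar{\mscrU}$. Writing out $\fo = e^{-\Lambda}\nabla\cdot(\Am\mathbf c)$ and using $|\Lambda(\x)| \le d_2\mu_3(R - r_0)$ on $\mscrU$ (from \eqref{Lamest1}), I would obtain
\[
|\fo(\x,t)| \le C\bigl(|\mathbf V(\x,t)| + |\nabla\mathbf V(\x,t)|\bigr), \qquad |\nabla\fo(\x,t)| \le C\bigl(|\mathbf V(\x,t)| + |\nabla\mathbf V(\x,t)| + |\nabla^2\mathbf V(\x,t)|\bigr),
\]
so that $\deltao \le C(\deltafi + \deltasi)$, $\deltafo \le C\deltafi$ and $\sup_U|w_0| \le C\sup_U|\sigma_0|$; this is precisely why $|\nabla^2\mathbf V|$ must appear in \eqref{smalldata2}. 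One also needs $w \in C^3_\x(D)$ and $w_t \in C^1_\x(D)$, which I would get from interior parabolic regularity for \eqref{nonhom}, the coefficients and the source being as smooth as required thanks to \condEa.

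With these verified, Proposition \ref{gradw} yields, for any $U' \Subset U$, all $\x \in U'$ and $t > 0$,
\[
|\nabla w(\x,t)| \le C\Bigl(1 + \frac{1}{\sqrt{t}}\Bigr)\Bigl[e^{-\eta_1 t}\sup_U|\sigma_0| + \deltafi + \sqrt{\deltafi} + \deltasi\Bigr].
\]
Since $e^{\Lambda}$, $|\Am|$ and the matrix in $\mathbf c$ are bounded on $\bar{\mscrU}$, substituting this into $\V_2 = e^{\Lambda}\Am\nabla w + \Am\mathbf c$ together with $|\mathbf c| \le C|\mathbf V| \le C\deltafi$ gives \eqref{supv12} for $\V_2$; since $|\mathbf V| \le \deltafi$, the bound for $\V_1 = \mathbf V - \V_2$ follows as well.

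Finally, for \eqref{limvelo} I would argue by time translation, as in Corollary \ref{Larget}: fix $\x \in U$ and $U' \Subset U$ with $\x \in U'$; given $\varep > 0$, choose $t_0$ so large that $\sup_{U\times[t_0,\infty)}(|\mathbf V| + |\nabla\mathbf V| + |\nabla^2\mathbf V|) + \sup_{\Gamma\times[t_0,\infty)}|g| < \varep$, which is possible by the hypothesis of the last assertion. Applying the estimate just proved on the shifted cylinder $U \times (t_0,\infty)$ with initial time $t_0$ — legitimate since $\sigma(\cdot,t_0) \in C(\bar U)$ is compatible with $g(\cdot,t_0)$ on $\Gamma$ and $\sigma$ is bounded on $\bar D$ — I would get $|\V_i(\x,t)| \le 2C[e^{-\eta_1(t-t_0)}\sup_U|\sigma(\cdot,t_0)| + \varep]$ for $t > t_0 + 1$ and $i = 1,2$, and then letting $t \to \infty$ and afterwards $\varep \to 0$ yields \eqref{limvelo}. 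The main obstacle is the regularity bootstrap needed to invoke Proposition \ref{gradw}, together with the careful differentiation of $\fo = e^{-\Lambda}\nabla\cdot(\Am\mathbf c)$ showing that $|\fo|$ and $|\nabla\fo|$ are controlled by $\mathbf V$, $\nabla\mathbf V$ and $\nabla^2\mathbf V$; the remaining steps are routine manipulation of bounds already at hand.
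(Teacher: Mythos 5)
Your proposal follows exactly the route the paper takes: express $\V_2$ via \eqref{v2w}, control $|\nabla w|$ by Proposition \ref{gradw} after verifying that $|\fo|,|\nabla\fo|$ are bounded in terms of $\mathbf V,\nabla\mathbf V,\nabla^2\mathbf V$, then read off $\V_1$ from \eqref{v1}, and prove \eqref{limvelo} by restarting the estimate on a shifted time interval and passing to the limit. The extra care you take in spelling out the regularity bookkeeping and the iterated-limit argument is helpful but does not change the underlying argument.
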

\begin{proof} 
Note that solution $\sigma(\x,t)$ of \eqref{sigEq} satisfies $\sigma\in C_{\x}^3(D)$ and $\sigma_t\in C_{\x}^2(D)$.
Let $w,\fo,G,w_0$ be the same as in Theorem \ref{Bsig}.
Using the estimate of $\nabla w$ in Lemma \ref{gradw} and formula \eqref{v2w}, we easily obtain estimate \eqref{supv12} for $\V_2$. Then the estimate for $\V_1$ follows this and \eqref{v1}. The proof of \eqref{limvelo} is similar to that of \eqref{limsup0}. We take $U'=B_{\delta}(\x)$ such that $U'\Subset U$. 
For $T>0$, let
 \beqs
 \deltafiT = \sup_{U\times[T,\infty)} (|\mathbf V(\x,t)| + |\nabla\mathbf V(\x,t)| +|\nabla^2\mathbf V(\x,t)|)
\text{ and }
\deltasiT = \sup_{\Gamma\times [T,\infty)} |g(\x,t)|.
\eeqs 
Use \eqref{supv12} for all $t>T$ and $\deltafiT$, $\deltasiT$ in place of $\deltafi$, $\deltasi$. Then let $T\to\infty$ noting that $\deltafiT\to 0$ and $\deltasiT\to 0$.
\end{proof}

\begin{remark}
The key ingredient of the above asymptotic results is Lemma \ref{grow}, the lemma of growth in time. It is worth mentioning that this result can be extended to more general parabolic equations in more general domains $D$ in $\R^{n+1}$ rather than just cylindrical-in-time domains $D=U\times (0,\infty)$.
\end{remark}

\myclearpage
\section{Case of unbounded domain}\label{unbounded}

We will analyze the linear stability of the steady flows from section \ref{Steady} in an unbounded, outer domain $U= \R^n\setminus\bar \Omega$,
where   $\Omega$ is a simply connected, open, bounded set containing the origin.
To emphasize the ideas and techniques, we consider the simple case $\Omega=B_{r_0}$ for some $r_0>0$.


For $R>r>0$, denote  
$\mscrO_{r}=\R^n\setminus \bar B_r$, 
$\mscrO_{r,R}=B_R \setminus \bar B_r$,
and denote their closures by $\bar \mscrO_r$ and $\bar \mscrO_{r,R}$, respectively.
Then $U=\mscrO_{r_0}$. Let $\Gamma=\partial U=\{\x:|\x|=r_0\}$ and $D=U\times(0,\infty)$.


For $T>0$ we denote $U_T=U\times(0,T]$, then its closure is $\bar U_T=\bar U\times[0,T]$ and its parabolic boundary is 
$\partial_p U_T=[\bar U\times\{0\}]\cup [\Gamma\times(0,T]]$.

Same as in section \ref{bounded}, we consider a steady state $(u^*_1(\x),u^*_2(\x),S_*(\x))$ in \eqref{Sradial} with $c_1^2+c_2^2>0$ and 
$\hat S(r)$ exists for all $r\ge r_0$. 
We assume throughout this section that
\beq\label{hatScond}
0<\underline s \le \hat S(r)\le \bar s<1 \quad\forall r\ge r_0,\text{ where }\underline s,\bar s=const.
\eeq
For instance,  in one of the cases in Theorem \ref{globalODE} if the limit $s_\infty\eqdef \lim_{r\to\infty}\hat S(r)$, which exists according to Theorem \ref{genS}, belongs to the interval $(0,1)$ then \eqref{hatScond} holds.

The problems of our interest are \eqref{sigEq} and its transformed form \eqref{nonhom}. 

Let $\mu_i$, for $i=1,2,3$, and $C_j$, for $j=0,1,2$, be defined as in section \ref{bounded} (see \eqref{mudef}, \eqref{Aellip1} and \eqref{Abbound1}).
Thanks to condition \eqref{hatScond}, which plays the role of \eqref{Sassump} in section \ref{bounded}, the main properties \eqref{Aellip1}, \eqref{Abbound1} and \eqref{smoooth1} still hold with $\mscrU=\mscrO_{r_0,R}$ being replaced by $\mscrU=U=\mscrO_{r_0}$.


\subsection{Maximum principle for unbounded domain} 
We establish the maximum principle for equation $\mathcal Lw=0$  in the domain $U$ with operator $\mathcal L$ defined by \eqref{Lw}.
For $T>0$, we construct a barrier function $W(\x,t)$  of the form: 
\beq\label{sup-sol}
 W(\x,t)\eqdef(T-t)^{-s} e^\frac{\varphi (\x)}{T-t}  \quad \text{for } (\x,t) \in \mscrO_{r_0,R}\times (0,T),
\eeq 
where constant $s>0$ and function $\varphi(\x)>0$ will be decided later.
Elementary calculations give
\beqs
 \mathcal L W = (T-t)^{-s-2}e^\frac{\varphi}{T-t} \Big\{ (T-t)\big(s-\nabla\cdot(\Am\nabla \varphi)- {\bf b}\cdot \Am\nabla \varphi  \big)  +\varphi - (\Am\nabla \varphi) \cdot \nabla \varphi   \Big\}.
 \eeqs
Then $\mathcal L W\ge 0$ if  
\beq\label{Setcond}
 s\ge \nabla\cdot(A\nabla \varphi)+ {\bf b}\cdot \Am \nabla \varphi \quad \text{and}\quad
 \varphi \ge (\Am\nabla \varphi) \cdot \nabla \varphi. 
\eeq
Similar to section \ref{bounded}, we choose
\beq\label{Phidef}
\varphi(\x) =\kapo\Big(\varphi_1 +\int_{r_0}^{|\x|} r \phi(r) dr\Big), \quad \text{where } \varphi_1=\frac {C_1r_0^2}2>0\text{ and }  \kapo=\frac{C_1}{2C_0},
\eeq
and function $\phi$ is defined by \eqref{defphi}. 
As in Lemma~\ref{WsubPar}, we have 
\beq\label{eqvar}
\Am\nabla \varphi =\kapo \x\quad \text{and}\quad\nabla \varphi =\kapo\phi(|\x|)\x .
\eeq
By \eqref{phiest0}, $\phi(r) \ge  d_0 \mu_2 = C_1>0$.
Then
\beq\label{phi2}
\varphi(\x) \ge \kapo \Big(\varphi_1+ C_1\int_{r_0}^{|\x|} r dr\Big)  =\frac  {\kapo C_1} 2 |\x|^2.   
\eeq
Also, we see from \eqref{eqvar} and \eqref{B3} that 
\beq\label{p2}
(\Am\nabla \varphi) \cdot \nabla \varphi = \kapo^2 \x^T \Bm \x 
\le  d_4  \kapo^2|\x|^2 \sum_{i=1}^2 F_i ( S_*(\x))\le \kapo^2 d_4 \mu_1 |\x|^2 = \kapo^2C_0|\x|^2=\frac  {\kapo C_1} 2 |\x|^2.
\eeq
Then we have from \eqref{phi2} and \eqref{p2} that
\beq\label{ksmall}
  \varphi(\x) \ge   (A\nabla \varphi) \cdot \nabla \varphi . 
\eeq
By \eqref{Abbound1} and \eqref{eqvar}, we have 
\beqs 
 \nabla\cdot(\Am\nabla \varphi)+ {\bf b}\cdot \Am\nabla \varphi \le \kapo(n+ C_2|\x|)\le C_3(1+|\x|),\quad\text{where } C_3=\kapo(n+C_2).
\eeqs
Select 
\beq\label{slarge}
s= s_R\eqdef C_3(1+R),
\eeq  
then 
\beq\label{news}
 s\ge \nabla\cdot(\Am\nabla \varphi)+ {\bf b}\cdot \Am\nabla \varphi \quad \text{in } \mscrO_{r_0,R}.
\eeq
Therefore  $\mathcal L W \ge 0$ in $\mscrO_{r_0,R}\times (0,T)$.
We summarize the above arguments in the following lemma.

 \begin{lemma}\label{sup-par}
Let $T>0$, $R>r_0$ and let the function  $\varphi$ be defined by \eqref{Phidef}. Then for $s=s_R$ in \eqref{slarge}, the function $W(\x,t)$ in \eqref{sup-sol} belongs to $C_{\x,t}^{2,1}(D)\cap C(\bar D)$  and satisfies $\mathcal L W\ge 0$  in   $\mscrO_{r_0,R}\times (0,T)$. 
 \end{lemma}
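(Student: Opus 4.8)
The plan is to verify directly that the function $W(\x,t)$ in \eqref{sup-sol} is a supersolution, i.e. $\mathcal L W\ge 0$ on $\mscrO_{r_0,R}\times(0,T)$, by carrying out the elementary differentiation already indicated in the excerpt and then discharging the two sufficient conditions in \eqref{Setcond}. First I would record the regularity claim: since $\varphi$ is built from $\phi$, which by Lemma \ref{smooth} (applied on the annulus $\mscrO_{r_0,R}$ with $m$ large enough under \condEa, or simply under Assumptions A, B together with \eqref{hatScond}) is continuous and smooth enough, the integral defining $\varphi$ in \eqref{Phidef} is $C^1$ (indeed $C^2$) on $\bar\mscrO_{r_0,R}$; moreover $\varphi(\x)\ge\kapo\varphi_1>0$ there, so $e^{\varphi/(T-t)}$ stays bounded and smooth for $t<T$, giving $W\in C_{\x,t}^{2,1}\cap C(\bar D)$.

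Next I would substitute $W$ into $\mathcal L$. Writing $\tau=T-t$, one computes $W_t=(T-t)^{-s-2}e^{\varphi/\tau}\{s\tau+\varphi\}$, $\nabla W=(T-t)^{-s-1}e^{\varphi/\tau}\nabla\varphi$, and $\nabla\cdot(\Am\nabla W)=(T-t)^{-s-1}e^{\varphi/\tau}[\nabla\cdot(\Am\nabla\varphi)+(T-t)^{-1}(\Am\nabla\varphi)\cdot\nabla\varphi]$, and similarly $\mathbf b\cdot\Am\nabla W=(T-t)^{-s-1}e^{\varphi/\tau}\,\mathbf b\cdot\Am\nabla\varphi$. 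Collecting terms produces exactly the bracketed expression displayed in the excerpt, and hence $\mathcal L W\ge 0$ provided both inequalities in \eqref{Setcond} hold. This is the bookkeeping step; it is routine but must be done with the sign of the exponent ($+\varphi/\tau$, as opposed to the $-\varphi/t$ of Section \ref{bounded}) tracked carefully, since that sign flip is precisely what turns the subsolution construction of Lemma \ref{WsubPar} into a supersolution here.

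Then I would verify \eqref{Setcond} using the choices in \eqref{Phidef} and \eqref{slarge}. The key identities are $\Am\nabla\varphi=\kapo\x$ and $\nabla\varphi=\kapo\phi(|\x|)\x$, which follow from $\Bm(\x)\x=\phi(|\x|)\x$ in \eqref{Bxx} exactly as in Lemma \ref{WsubPar}. For the second condition, $(\Am\nabla\varphi)\cdot\nabla\varphi=\kapo^2\x^T\Bm\x\le\kapo^2 d_4\mu_1|\x|^2=\kapo^2C_0|\x|^2$ by the upper bound in \eqref{B3} together with \eqref{hatScond} and \eqref{mudef}, while $\varphi(\x)\ge\kapo C_1|\x|^2/2$ by integrating the lower bound $\phi(r)\ge d_0\mu_2=C_1$ from \eqref{phiest0}; the choice $\kapo=C_1/(2C_0)$ makes $\kapo^2C_0=\kapo C_1/2$, so $\varphi\ge(\Am\nabla\varphi)\cdot\nabla\varphi$ on $\mscrO_{r_0,R}$. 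For the first condition, $\nabla\cdot(\Am\nabla\varphi)+\mathbf b\cdot\Am\nabla\varphi=\kapo(\nabla\cdot\x+\mathbf b\cdot\x)\le\kapo(n+C_2|\x|)\le C_3(1+|\x|)\le C_3(1+R)=s_R$ on the bounded annulus, using $|\mathbf b|\le C_2$ from \eqref{Abbound1}. Both requirements hold, hence $\mathcal L W\ge 0$.

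I do not anticipate a genuine obstacle; the only point needing care is the boundedness of $|\x|$ used in \eqref{slarge} — this is why the lemma is stated on the finite annulus $\mscrO_{r_0,R}$ rather than on all of $U$, and it is the reason $s$ is allowed to depend on $R$. The construction is a direct mirror of Lemma \ref{WsubPar}, so the proof is essentially a transcription with the exponential sign reversed and the roles of $C_0$, $C_1$ interchanged in the definition of the constant $\kapo$.
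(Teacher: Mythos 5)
Your proposal is correct and mirrors the paper's own derivation exactly: the same computation of $\mathcal L W$ leading to the two sufficient conditions in \eqref{Setcond}, the same identities $\Am\nabla\varphi=\kapo\x$ and $\nabla\varphi=\kapo\phi(|\x|)\x$ from \eqref{Bxx}, the same two-sided use of \eqref{B3} with \eqref{hatScond} to verify $\varphi\ge(\Am\nabla\varphi)\cdot\nabla\varphi$, and the same choice $s=s_R=C_3(1+R)$ exploiting the boundedness of $|\x|$ on the annulus. The only caveat is that the phrase ``$e^{\varphi/(T-t)}$ stays bounded'' is imprecise (it diverges as $t\to T^-$); the regularity claim $W\in C^{2,1}_{\x,t}\cap C$ should be read on $\mscrO_{r_0,R}\times(0,T)$ and up to the parabolic boundary away from $t=T$, which is also what the paper implicitly means.
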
 

 Using the above  barrier function $W(x,t)$, we have the following maximum principle.  

  \begin{theorem}\label{maximumprinciple} 
Let  $T>0$ and $w(\x,t)$ be a bounded function in $C^{2,1}_{\x,t}(U_T) \cap C(\bar U_T)$ that solves $\mathcal L w = \fo$ in $U_T$, where $\fo\in C(\bar U_T)$. 
Then
\beq\label{maxunbound}
\sup_{\bar U_T}   |w(\x ,t)|  \le \sup_{\partial_p U_T} |w(\x ,t)| + (T+1)\sup_{\bar U_T} |\fo|.
\eeq
\end{theorem}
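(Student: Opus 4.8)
The plan is to prove the maximum principle estimate \eqref{maxunbound} for the unbounded outer domain by the standard device of subtracting off a linear-in-time function to absorb the inhomogeneity $\fo$, and then controlling the solution on the unbounded region $\{|\x|\ge R\}$ by means of the barrier $W(\x,t)$ from Lemma~\ref{sup-par}, finally letting $R\to\infty$. Since the paper has already done the hard construction work in Lemma~\ref{sup-par}, the remaining argument is a comparison-function bookkeeping exercise.

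First I would reduce to the homogeneous-inequality case. Set $M_0=\sup_{\partial_p U_T}|w|$ and $N_0=\sup_{\bar U_T}|\fo|$, both finite by hypothesis (the former because $w\in C(\bar U_T)$ restricted to the compact-in-$\x$-slices parabolic boundary is bounded — actually here one needs $w$ bounded, which is assumed — and the latter because $\fo\in C(\bar U_T)$; one should be slightly careful and just invoke the standing boundedness hypotheses). Define $v(\x,t)=w(\x,t)-N_0(t+1)$. Then
\beqs
\mathcal L v = \mathcal L w - N_0 = \fo - N_0 \le 0 \quad\text{in } U_T,
\eeqs
and on $\partial_p U_T$ we have $v\le |w| \le M_0$ (using $t\ge 0$ so $-N_0(t+1)\le 0$). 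The goal is to show $v\le M_0$ throughout $\bar U_T$; applying the same argument to $-w$ gives the matching lower bound, and together these yield $|w|\le M_0 + N_0(T+1)$, which is \eqref{maxunbound}.

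Next I would run the comparison on the bounded annular region $\mscrO_{r_0,R}\times(0,T)$ for large $R$. Let $W(\x,t)$ be the barrier from Lemma~\ref{sup-par}, which satisfies $\mathcal L W\ge 0$ on $\mscrO_{r_0,R}\times(0,T)$ and is continuous and positive up to $t=T^-$... wait — actually $W(\x,t)=(T-t)^{-s}e^{\varphi(\x)/(T-t)}\to\infty$ as $t\to T^-$, so $W$ is a genuine supersolution that blows up on the top. Fix $\varepsilon>0$ and consider $v - M_0 - \varepsilon W$ on $\mscrO_{r_0,R}\times(0,T)$. On the lateral piece $|\x|=r_0$ we have $v\le M_0$ and $W>0$, so the difference is $\le 0$; on $\{t=0\}$ likewise $v\le M_0$, $W>0$; on the outer piece $|\x|=R$ we use that $v$ is bounded (by $\sup|w|+N_0(T+1)=:K$, independent of $R$) while $W\ge (T-t)^{-s}e^{\kapo C_1 R^2/(2(T-t))}\ge (T-t)^{-s}$ grows with $R$ — more precisely, for $R$ large enough (depending on $\varepsilon$, $T$, $K$) one has $\varepsilon W(\x,t)\ge K$ on $|\x|=R$, $0\le t<T$, because $e^{\kapo C_1 R^2/(2(T-t))}\ge e^{\kapo C_1 R^2/(2T)}\to\infty$. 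Hence $v-M_0-\varepsilon W\le 0$ on the entire parabolic boundary of $\mscrO_{r_0,R}\times(0,T')$ for every $T'<T$; since $\mathcal L(v-M_0-\varepsilon W)\le 0$ there, the ordinary maximum principle on this bounded cylinder gives $v\le M_0+\varepsilon W$ throughout. Letting $T'\to T^-$, then fixing $(\x,t)$ with $|\x|<R$ and sending $R\to\infty$ (the constraint linking $R$ and $\varepsilon$ lets $R\to\infty$ first), then $\varepsilon\to 0$, we conclude $v(\x,t)\le M_0$ for all $(\x,t)\in \bar U_T$ with $t<T$; continuity extends it to $t=T$.

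**The main obstacle** I anticipate is not conceptual but one of careful ordering of limits: the barrier $W$ depends on $R$ through the cylinder on which Lemma~\ref{sup-par} guarantees $\mathcal L W\ge 0$ (the choice $s=s_R=C_3(1+R)$ grows with $R$), so one must check that $\varepsilon W(\x,t)\to 0$ as $\varepsilon\to 0$ at a fixed interior point $(\x,t)$ even though $s=s_R$ is being sent to infinity — this is fine because at fixed $(\x,t)$ with $t<T$ the factor $(T-t)^{-s_R}$ is a fixed positive power being raised, and $(T-t)^{-s_R}$ could itself blow up if $T-t<1$. The clean fix is to not send $R\to\infty$ with the point fixed inside a barrier that also depends on $R$; instead, for each fixed $R$ one already obtains $v\le M_0$ on $\mscrO_{r_0,R}\times[0,T]$ after sending $\varepsilon\to 0$ (since for fixed $R$ and fixed interior $(\x,t)$, $\varepsilon W(\x,t)\to 0$), and then one simply lets $R\to\infty$ on the now $\varepsilon$-free inequality. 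Thus the correct order is: fix $R$; run the bounded-cylinder comparison with parameter $\varepsilon$; send $\varepsilon\to 0$ to get $v\le M_0$ on $\mscrO_{r_0,R}\times[0,T)$; finally send $R\to\infty$ and use continuity at $t=T$. Keeping this order straight — and verifying that the lateral bound $\varepsilon W\ge K$ on $|\x|=R$ can be met for $R$ large with $\varepsilon$ still free to go to zero afterward — is the delicate point, but it is entirely routine once set up correctly.
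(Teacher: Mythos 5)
Your approach tracks the paper's proof step for step: the same linear-in-time subtraction to absorb $\fo$, the same supersolution $W$ from Lemma~\ref{sup-par}, comparison on a truncated bounded annular cylinder, and passage to the unbounded domain. The delicate point you flag---that the barrier parameter $s=s_R=C_3(1+R)$ grows with $R$, so $(T-t)^{-s_R}$ at a fixed interior point can blow up as $R\to\infty$ when $T-t<1$---is a genuine subtlety that the paper's final ``letting $\mu\to 0$'' does not spell out. So the identification of the issue is correct.

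Your proposed resolution, however, does not work. You claim that for \emph{fixed} $R$ one may send $\varepsilon\to 0$ and conclude $v\le M_0$ on $\mscrO_{r_0,R}\times[0,T]$. But the comparison on the outer lateral boundary $\{|\x|=R\}$ requires $\varepsilon W\ge K$ there, which (using $\varphi(\x)\ge \kapo C_1 R^2/2$ at $|\x|=R$ and the monotonicity of $z\mapsto z^{-s}e^{a/z}$) is equivalent to a \emph{lower} bound
$\varepsilon\ge K\,T^{\,s_R}\,e^{-\kapo C_1 R^2/(2T)}>0$.
For fixed $R$ this forbids $\varepsilon\to 0$; the two parameters are genuinely coupled and cannot be decoupled the way your ``clean fix'' suggests. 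If you tried to write the proof that way, the comparison on $|\x|=R$ would fail as soon as $\varepsilon$ drops below the threshold.

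The correct way to close the gap---and what the paper's choice in \eqref{Rcond} is implicitly set up for---is to keep $\varepsilon$ and $R=R(\varepsilon)$ coupled, obtain $v(\x_0,t_0)\le M_0+\varepsilon W_{R(\varepsilon)}(\x_0,t_0)$ for each admissible pair, and then check directly that the coupled limit vanishes. Taking $\varepsilon$ comparable to its minimal admissible value one finds, for $0<t_0<T$,
\beqs
\varepsilon\,W_{R}(\x_0,t_0)\;\le\; C\,K\,e^{\varphi(\x_0)/(T-t_0)}\,
\exp\!\Big( C_3(1+R)\,\ln\tfrac{T}{\,T-t_0\,}\;-\;\frac{\kapo C_1 R^2}{2T}\Big),
\eeqs
and since $\ln\big(T/(T-t_0)\big)$ is a fixed positive constant while $R^2$ dominates $R$, the exponent tends to $-\infty$ as $R\to\infty$. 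Thus $\varepsilon W_{R(\varepsilon)}(\x_0,t_0)\to 0$ even though $(T-t_0)^{-s_R}$ alone diverges: the quadratic-in-$R$ exponent $\kapo C_1R^2/(2T)$ produced by \eqref{phi2} beats the linear-in-$R$ growth coming from $s_R$. This explicit verification is the missing ingredient you need; without it, the limit argument (in either your ordering or the paper's) is not complete.
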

\begin{proof}
Given any $(\x_0,t_0)\in U\times (0,T)$.
Let $\delta>0$ such that $t_0<T-\delta$.
Let $M=\sup_{\bar U_T}|w(\x,t)|$ and $\deltaz=\sup_{\bar U_T} |\fo|$ which are finite numbers. Let $\mu>0$ be arbitrary.
Select $R>0$ sufficiently large such that
\beq\label{Rcond} 
T^{-C_3(1+R)} e^\frac{\kapo C_1 R^2}{2T}>M/\mu.
\eeq
Denote $\mathcal C= \mscrO_{r_0,R}\times(0,T-\delta]$. Then $(\x_0,t_0)\in \mathcal C$.
Let $W(\x,t)$ be as in Lemma \ref{sup-par}.
We define the auxiliary function  
\beq\label{u:def}
u(\x,t)=w(\x,t)-\deltaz(t+1) -\mu W(\x, t),\quad (\x,t)\in \mathcal C.
\eeq   
We have  $u\in C^{2,1}_{\x,t}(\mathcal C) \cap C( \mathcal C)$ and, thanks to Lemma \ref{sup-par}, function $u$ satisfies
$$  \mathcal Lu=   \fo -\deltaz  -\mu \mathcal  L  W \le 0 \quad\text{in } \mathcal C. $$
By the maximum principle,
\beq\label{Laxprinciple}
\max_{\bar {\mathcal C} }u  = \max_{\partial_p \mathcal C}   u.
\eeq 
Let us evaluate $u(x,t)$ on the parabolic boundary $\partial_p \mathcal C$.
For any $\x\in  \mscrO_{r_0,R}$,
\beq\label{LT0}
u(\x,0) \le  w(\x,0)-\mu W(\x,0)=w(\x,0)-  \mu T^{-s} e^\frac{\varphi (\x)}{T} \le    w(\x,0).
\eeq
For $|\x|=r_0$ and $0\le t\le T-\delta$,
\beq\label{LT1}
u(\x,t) \le  w(\x ,t) -\mu W(\x,t) \le w(\x ,t).
\eeq
For $|\x|=R$ and $0\le t\le T-\delta$, we have from \eqref{phi2}, \eqref{slarge} and \eqref{Rcond} that
\beq\label{LT2}
u(\x,t) \le   w(\x,t) - \mu(T-t)^{-s} e^\frac{\varphi (\x)}{T-t}  \le  M-\mu T^{-C_3(1+R)} e^\frac{\kapo C_1 R^2}{2T} \le 0.
\eeq
Hence, we have from \eqref{Laxprinciple}, \eqref{LT0},\eqref{LT1} and \eqref{LT2} that
\beq\label{LaxonBoundary}
 \max_{\bar{ \mathcal C}} u(\x,t) 
 \le \max\{ 0, \sup_{U} w(\x,0), \sup_{\Gamma\times[0,T]} w(\x ,t)   \}. 
\eeq                             
In particular, it follows from \eqref{LaxonBoundary} that 
\beq\label{u:max}
  u(\x_0,t_0)\le  \max\{0,\sup_{\partial_p U_T} w\}.
\eeq
Now, letting $\mu\to 0$ in \eqref{u:def} yields
\beqs
   w(\x_0,t_0)-\deltaz(t_0+1)\le  \max\{0,\sup_{\partial_p U_T}  w\} \le  \sup_{\partial_p U_T}  |w|  .
\eeqs
Hence,
\beqs
   w(\x_0,t_0)\le  \sup_{\partial_p U_T}  |w| + \deltaz(T+1).
\eeqs
Repeating the above arguments for $(-w)$ gives
\beq
   |w(\x_0,t_0)|\le   \sup_{\partial_p U_T}  |w| + \deltaz(T+1)
\eeq
for any $(\x_0,t_0)\in U\times (0,T)$. Therefore, \eqref{maxunbound} follows.
\end{proof}

We study the following IBVP \eqref{nonhom} for $w(\x,t)$.

\textbf{Condition} \condEb. $F_1,F_2\in C^7((0,1))$, $w_0\in C(\bar U)$, $G\in C(\Gamma\times[0,\infty))$ and $G(\x,0)=w_0(\x)$ on $\Gamma$.

\begin{theorem}\label{Euniq} 

Assume \condEb, $f_0\in C_{\x}^5(\bar D)$, $\partial_t f_0\in C_{\x}^3(\bar D)$.
Suppose $w_0(\x)$, $G(\x,t)$ and $\fo(\x,t)$ are bounded functions.
Then, 

{\rm (i)} There exists a solution $w(\x,t)\in C^{2,1}_{\x,t}(D)\cap C(\bar D)$ of  \eqref{nonhom} . 

{\rm (ii)} This solution is unique in class of locally (in time) bounded solutions, i.e., the class of solutions $w(\x,t)$ such that
\beq
\sup_{ U\times [0,T]} |w(\x,t)|<\infty\quad \text{for any } T>0.
\eeq

{\rm (iii)} Furthermore,  for $(\x,t)\in D$, 
\beq\label{westimate}
|w(\x,t)|\le  \deltaten + \deltazero(t+1),
\eeq
where
\beq
\deltaten = \max\{\sup_{ U }| w_0(\x)|,\sup_{\Gamma\times[0,\infty)}|G(\x,t)|\}\quad\text{and}\quad
\deltazero=\sup_{D} |\fo|.
\eeq
\end{theorem}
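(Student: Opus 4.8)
The plan is to prove the three parts in the order (iii)-first-for-a-priori-bounds, then (ii) uniqueness, then (i) existence, since the maximum principle estimate of Theorem \ref{maximumprinciple} is the engine driving all three. For part (iii), I would apply Theorem \ref{maximumprinciple} on the cylinder $U_T$ for an arbitrary $T>0$: the hypotheses there require only that $w$ be a bounded (on $\bar U_T$) classical solution of $\mathcal L w=\fo$ with $\fo\in C(\bar U_T)$, and our $w_0,G,\fo$ are assumed bounded, so $\sup_{\partial_p U_T}|w|\le\deltaten$ and $\sup_{\bar U_T}|\fo|\le\deltazero$. This immediately gives $|w(\x,t)|\le\deltaten+(T+1)\deltazero$ for $(\x,t)\in U_T$; since any fixed $(\x,t)\in D$ lies in $U_T$ for $T=t$, we may take $T=t$ and obtain exactly \eqref{westimate}. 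The only subtlety is that applying Theorem \ref{maximumprinciple} presupposes $w$ is already known to be locally-in-time bounded, which is why (iii) is really a statement about the solution produced in (i) and about any solution in the uniqueness class of (ii); I would phrase it accordingly.

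For part (ii), suppose $w^{(1)}$ and $w^{(2)}$ are two solutions in $C^{2,1}_{\x,t}(D)\cap C(\bar D)$ that are bounded on $U\times[0,T]$ for every $T$. Their difference $w=w^{(1)}-w^{(2)}$ lies in $C^{2,1}_{\x,t}(U_T)\cap C(\bar U_T)$, is bounded on $\bar U_T$, solves $\mathcal L w=0$ in $U_T$, and vanishes on $\partial_p U_T$ (zero initial data since both match $w_0$, zero boundary data on $\Gamma$ since both match $G$). Theorem \ref{maximumprinciple} with $\fo\equiv 0$ then forces $\sup_{\bar U_T}|w|\le 0$, hence $w\equiv 0$ on $\bar U_T$; letting $T\to\infty$ gives $w^{(1)}\equiv w^{(2)}$ on $\bar D$.

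For part (i), the natural route is exhaustion by bounded domains. For each $R>r_0$ consider the IBVP for $\mathcal L w_R=\fo$ on $\mscrO_{r_0,R}\times(0,\infty)$ with the data $w_0$, $G$ on the inner boundary $\Gamma$, and some convenient fixed boundary value (e.g.\ matching a smooth extension of $w_0$, or simply $G$ pulled to $|\x|=R$) on the outer boundary $|\x|=R$; the hypotheses $F_1,F_2\in C^7$, the regularity of $\fo$ and $\partial_t\fo$, and \condEb\ are exactly what Theorem \ref{eu} (the \cite{IKO} existence theorem) needs on the bounded annulus, so a unique $w_R\in C(\overline{\mscrO_{r_0,R}}\times[0,\infty))\cap C^{2,1}_{\x,t}$ exists. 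The uniform bound \eqref{westimate}-type estimate (again from the bounded-domain maximum principle, or from Theorem \ref{maximumprinciple} applied after the fact) shows $\{w_R\}$ is bounded on every compact subset of $\bar D$ uniformly in $R$; interior parabolic Schauder estimates then give equicontinuity of $w_R$ and its derivatives on compact subsets of $D$, so along a subsequence $R\to\infty$ we extract a limit $w\in C^{2,1}_{\x,t}(D)$ solving $\mathcal L w=\fo$, and a boundary-barrier argument near $\Gamma$ and $t=0$ (using the same barrier $W$ from Lemma \ref{sup-par}, or a standard local barrier) yields $w\in C(\bar D)$ with the correct initial and boundary values. The limit $w$ is locally-in-time bounded by construction, so (ii) applies and (iii) follows.

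The main obstacle is part (i): the passage to the limit $R\to\infty$ requires care that the extracted limit genuinely attains the prescribed data on $\Gamma\times(0,\infty)$ and on $\bar U\times\{0\}$, i.e.\ that no boundary layer forms and nothing escapes to spatial infinity. This is handled by the uniform a priori bound (so the family does not blow up) together with local barriers at the finite boundary — and here the explicit supersolution $W(\x,t)$ of Lemma \ref{sup-par}, together with a symmetric subsolution, controls $w_R$ near $\Gamma$ independently of $R$. Everything else — the maximum-principle estimates in (ii) and (iii), and the interior compactness — is routine once Theorem \ref{maximumprinciple} and Theorem \ref{eu} are in hand.
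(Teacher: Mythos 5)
Your proof is correct and takes essentially the same approach as the paper, whose own "proof" consists of rewriting $\mathcal L$ in non-divergence form and then invoking Theorem~4, p.474 of \cite{IKO} together with Theorem~\ref{maximumprinciple}, deferring all details to Theorem~4.6 of \cite{HIK1}. Your use of Theorem~\ref{maximumprinciple} for parts (ii) and (iii) is exactly the mechanism the paper relies on, and your exhaustion-by-annuli construction for part (i) — bounded-domain solvability plus uniform a priori bounds, interior Schauder compactness, and local barriers at $\Gamma$ and $t=0$ — is the standard machinery underlying the cited \cite{IKO} unbounded-domain existence result. The one small inaccuracy is citing Theorem~\ref{eu} for the bounded-annulus step: that theorem is stated for the $\sigma$-problem \eqref{sigEq} under condition \condEa\ (regularity of $\mathbf V$), not for \eqref{nonhom} under \condEb\ with $f_0\in C_{\x}^5$, $\partial_t f_0\in C_{\x}^3$; one should either pass through the transformation $\sigma=we^{\Lambda}$ or cite \cite{IKO} directly for the non-divergence form of $\mathcal Lw=f_0$, as the paper does.
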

\begin{proof}
We rewrite equation in the non-divergent form   
\beqs
\mathcal Lw= \frac{\partial w} {\partial t}- \sum_{i,j=1}^n a_{ij}\frac{\partial^2 w} {\partial x_i\partial x_j}  - \sum_{i,j=1}^n \big[ (a_{ij})_{x_i} +a_{ij}b_i \big]  \frac{\partial w}{\partial x_j} = 0.
\eeqs
Thanks to Theorem 4 p.474 of \cite{IKO} and the maximum principle in Theorem \ref{maximumprinciple}, one can prove (i), (ii) and (iii) using similar  arguments presented in Theorem 4.6 of \cite{HIK1}. We omit the details.
\end{proof}

\subsection{Lemma of growth in spatial variables} 

We now study the behavior of the solutions as $|\x|\to\infty$. This requires a different type of lemma of growth and a new barrier function.

Let  $R>0$ and $\ell\ge R+r_0$. Denote
\beq
\mscrO_R(\ell)=\mscrO_{\ell-R,\ell+R}=\{\x\in\R^n: | |\x|  -\ell |< R\} \quad \text{and}\quad \Sph_\ell=\{\x\in\R^n:|\x|=\ell\}.
\eeq
Define the barrier function
\beq\label{sub-sol}
\mathcal W(\x,t) =\frac1{(t+1)^s} e^{-\frac{\psi(\x)}{t+1} }\quad\text{for } |\x|\ge r_0,\ t\ge 0,
\eeq
where parameter $s>0$ and function $\psi>0$.
Then
\beq
\mathcal L \mathcal W=(t+1)^{-s-2} e^{-\frac{\psi(\x)}{t+1} }\Big\{ (t+1)\big[-s+\nabla\cdot(\Am\nabla \psi)+{\bf b}\cdot \Am \nabla \psi\big]+\psi-(\Am\nabla \psi) \cdot \nabla \psi \Big\}.
\eeq
Hence,  $\mathcal L\mathcal W\le 0$ if 
 \beq\label{spsicond}
 s\ge \nabla\cdot(\Am\nabla \psi)+ {\bf b}\cdot \Am\nabla \psi\quad\text{and}\quad
 \psi \le (\Am\nabla \psi) \cdot \nabla \psi .
 \eeq
 Denote ${\boldsymbol \xi}(\x) =\ell\x/|\x|$.
We will choose $\psi$ such  that 
\beqs
\Am\nabla \psi = \kaptw(\x-\boldsymbol\xi)\quad\text{for some }\kaptw>0.
\eeqs
By \eqref{Bxx} and \eqref{defphi},
\beq
\nabla \psi =\kaptw \Am^{-1}(\x-\boldsymbol\xi) =\kaptw \Bm \x (|\x|-\ell)/|\x|
=\kaptw\phi(|\x|) (|\x|-\ell) \x/|\x|.  
\eeq
Select 
\beq\label{newphi}
\psi(\x) = \kaptw\int_{\ell}^{|\x|} (r-\ell)\phi(r) dr,\quad   \text{where }     
\kaptw= \frac{C_0}{2C_1 }
\eeq 
and function $\phi$ is defined by \eqref{defphi}.
For all $\x\in \mscrO_R(\ell)$, we have  from \eqref{phiest1} that
\beq\label{psibound}
\psi ( \x)\le \kaptw C_0   \int_{\ell}^{|\x|} (r -\ell) dr
=  \frac{\kaptw C_0} 2 ( |\x| - \ell )^2.
\eeq
By \eqref{B3}, 
\begin{align*}
(\Am\nabla \psi) \cdot \nabla \psi &=\kaptw^2 (\x-\boldsymbol\xi)^T \Bm(\x) (\x-\boldsymbol\xi)
 \ge d_0 \kaptw^2 |\x-\boldsymbol\xi|^2\sum_{j=1}^2 F_j(S_*(\x))\\
&\ge \kaptw^2 C_1(|\x|-\ell)^2
=  \frac{\kaptw C_0} 2 ( |\x| - \ell )^2.
\end{align*}
Hence  this and \eqref{psibound}  give $\psi \le (\Am\nabla \psi) \cdot \nabla \psi$, that is, the second condition  in \eqref{spsicond}.
Also,
\begin{align*}
 \nabla\cdot(\Am\nabla \psi)+ {\bf b}\cdot (\Am\nabla \psi) 
 &=\kaptw\Big[\nabla\cdot (\x -\boldsymbol\xi)+ {\bf b}\cdot (\x-\boldsymbol\xi) \Big]
 =\kaptw\Big[n-(n-1)\frac{\ell}{|\x|} + {\bf b}\cdot(\x-\boldsymbol\xi) \Big]\\
 &\le \kaptw (n + |{\bf b}| R).
\end{align*}
Then by \eqref{Abbound1},
\beq
 \nabla\cdot(\Am\nabla \psi)+ {\bf b}\cdot (\Am\nabla \psi)  \le \kaptw(n + C_2 R)\le  C_3(1+R) ,   
\eeq
where $C_3=\kaptw (n+C_2)$.
By selecting
\beq\label{s-pow}
s=s_R\eqdef  C_3(1+R),  
\eeq
we have $s\ge \nabla\cdot(\Am\nabla \psi)+ {\bf b}\cdot (\Am\nabla \psi)$ which is the first condition in \eqref{spsicond}.
Therefore  $\mathcal LW\le 0$ in $\mscrO_R(\ell)\times(0,\infty)$. We have proved:

\begin{lemma}\label{Sub-sol}
Given any $R>0$ and $\ell\ge R+r_0$.
Let $s=s_R$ be defined by \eqref{s-pow}  and the function $\psi$ be defined by \eqref{newphi}. Then the function $\mathcal W(\x,t)$ in \eqref{sub-sol} belongs to $C_{\x,t}^{2,1}(D)\cap C(\bar D)$ and satisfies $\mathcal L\mathcal W\le 0$ on $\mscrO_R(\ell)\times(0,\infty)$. 
 \end{lemma}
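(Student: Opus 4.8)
The plan is to assemble the computation sketched in the paragraphs just before the statement, organizing it around the two sufficient conditions in \eqref{spsicond}, and then to record the (elementary) regularity of $\mathcal W$. Because the hypothesis $\ell\ge R+r_0$ forces $\mscrO_R(\ell)$, and indeed all of $D$, to stay uniformly away from the origin, the functions $\boldsymbol\xi(\x)=\ell\x/|\x|$ and $\psi(\x)=\kaptw\int_\ell^{|\x|}(r-\ell)\phi(r)\,dr$ are smooth in $\x$ there (the integrand involves only $\phi$ from \eqref{defphi}, which is $C^1$ under the standing smoothness assumed on $F_1,F_2$ and on $\hat S$), and since $t+1\ge 1$ for $t\ge 0$, the factor $(t+1)^{-s}e^{-\psi(\x)/(t+1)}$ is jointly smooth with no singularity at $t=0$. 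Hence $\mathcal W\in C^{2,1}_{\x,t}(D)\cap C(\bar D)$.

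A direct differentiation yields
\[
\mathcal L\mathcal W=(t+1)^{-s-2}e^{-\frac{\psi}{t+1}}\Big\{(t+1)\big[-s+\nabla\cdot(\Am\nabla\psi)+\mathbf b\cdot\Am\nabla\psi\big]+\psi-(\Am\nabla\psi)\cdot\nabla\psi\Big\}.
\]
Since the prefactor is positive, it suffices to make the brace nonpositive on $\mscrO_R(\ell)\times(0,\infty)$, and for that the two inequalities in \eqref{spsicond} are enough. The definition \eqref{newphi} is engineered precisely so that, via \eqref{Bxx} and \eqref{defphi}, one has $\Am\nabla\psi=\kaptw(\x-\boldsymbol\xi)$ and $\nabla\psi=\kaptw\phi(|\x|)(|\x|-\ell)\x/|\x|$; this is the crucial simplification, as it turns \eqref{spsicond} into elementary pointwise inequalities in which no derivative of $\phi$ appears.

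To check the second inequality I would bound $\psi$ from above using $\phi\le C_0$ on $[r_0,\infty)$ (from \eqref{phiest1} together with \eqref{hatScond} and \eqref{mudef}), obtaining $0\le\psi(\x)\le\frac{\kaptw C_0}{2}(|\x|-\ell)^2$, and bound $(\Am\nabla\psi)\cdot\nabla\psi=\kaptw^2(\x-\boldsymbol\xi)^{T}\Bm(\x)(\x-\boldsymbol\xi)$ from below using \eqref{B3} and $\sum_i F_i(\hat S(r))\ge\mu_2$, obtaining $\kaptw^2 C_1|\x-\boldsymbol\xi|^2=\kaptw^2 C_1(|\x|-\ell)^2$ (here $\x$ and $\boldsymbol\xi$ are collinear, so $|\x-\boldsymbol\xi|^2=(|\x|-\ell)^2$). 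Since $\kaptw=C_0/(2C_1)$, the two bounds coincide, giving $\psi\le(\Am\nabla\psi)\cdot\nabla\psi$. For the first inequality I would compute $\nabla\cdot(\Am\nabla\psi)+\mathbf b\cdot\Am\nabla\psi=\kaptw\big[n-(n-1)\ell/|\x|+\mathbf b\cdot(\x-\boldsymbol\xi)\big]$, discard the nonpositive term $-(n-1)\ell/|\x|$, and use $|\mathbf b|\le C_2$ and $|\x-\boldsymbol\xi|<R$ on $\mscrO_R(\ell)$ to get the bound $\kaptw(n+C_2R)\le C_3(1+R)=s_R$ with $C_3=\kaptw(n+C_2)$. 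Both parts of \eqref{spsicond} then hold, so $\mathcal L\mathcal W\le 0$ on $\mscrO_R(\ell)\times(0,\infty)$, as claimed. I do not expect any genuine obstacle here beyond keeping track of the constants $\kaptw$, $C_0$, $C_1$, $C_2$, $C_3$; the single point that must be watched is that the hypothesis $\ell\ge R+r_0$ guarantees $\phi$ and $\hat S$ are only ever evaluated on $[r_0,\infty)$, precisely where \eqref{hatScond}, \eqref{phiest1} and \eqref{B3} are available.
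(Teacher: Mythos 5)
Your proposal is correct and follows essentially the same route as the paper: deriving the two sufficient conditions in \eqref{spsicond}, using the engineered $\psi$ so that $\Am\nabla\psi=\kaptw(\x-\boldsymbol\xi)$, bounding $\psi$ above and $(\Am\nabla\psi)\cdot\nabla\psi$ below via \eqref{phiest1}, \eqref{B3} and the collinearity of $\x$ and $\boldsymbol\xi$, and verifying the first condition by computing $\nabla\cdot(\x-\boldsymbol\xi)=n-(n-1)\ell/|\x|$ and using $|\mathbf b|\le C_2$ with $|\x-\boldsymbol\xi|<R$. The constants and the choice $s=s_R=C_3(1+R)$ all check out.
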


 Next is the lemma of growth in the spatial variables. 

\begin{lemma}\label{Cmp}
Given $T>0$, let
\begin{align}
\label{RT}
 R&=R(T)=C_4(1+T),\\
\label{etaT}
\eta_0&=\eta_0(T)=\Big(1 -\frac 1 {2^{C_5(T+1)}}\Big)\frac 1{(T+1)^{2C_5(T+1)}},
\end{align}
where $C_4= \max\{1, \frac{8C_3}{\kaptw eC_0}\}$ and $C_5=C_3C_4$.
Suppose $w(\x,t)\in C_{\x,t}^{2,1}(U_T)\cap C(\bar U_T)$ satisfies   $\mathcal L w \le 0$  on  $U_T$  and  $w(\x,0) \le 0$ on $\bar U$.
Let $\ell$ be any number such that  $\ell \ge R+r_0$, then 
 \beq\label{Comp}
 \max\big\{0, \sup_{ \Sph_\ell\times [0,T] }  w(\x, t) \big\}\le  \frac1 {1+\eta_0 } \max \big\{ 0, \sup_{ \bar \mscrO_R(\ell)\times[0,T] }  w(\x, t)\big \}. 
\eeq
\end{lemma}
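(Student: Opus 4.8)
The plan is to run a Landis-type lemma-of-growth argument in the spatial variable, using the barrier $\mathcal W$ of Lemma~\ref{Sub-sol}. Fix $T>0$, $R=R(T)$ and $\eta_0=\eta_0(T)$ as in \eqref{RT}--\eqref{etaT}, $\ell\ge R+r_0$, and set $s=s_R=C_3(1+R)$ with $\psi$ as in \eqref{newphi}. Since $\ell\ge R+r_0$, the shell $\mscrO_R(\ell)=\{\ell-R<|\x|<\ell+R\}$ lies in $U$, so $\bar\mscrO_R(\ell)\times[0,T]\subset\bar U_T$ and all hypotheses are available there; in particular $\mathcal L\mathcal W\le 0$ on $\mscrO_R(\ell)\times(0,T]$ by Lemma~\ref{Sub-sol}. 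Put $M=\max\{0,\sup_{\bar\mscrO_R(\ell)\times[0,T]}w\}$. If $M=0$ then $w\le0$ on $\bar\mscrO_R(\ell)\times[0,T]$ and \eqref{Comp} is trivial, so assume $M>0$ and introduce the comparison function $v(\x,t)=M\bigl(1-\mathcal W(\x,t)\bigr)$. Because $\mathcal L$ is linear and annihilates constants, $\mathcal Lv=-M\,\mathcal L\mathcal W\ge0$ on $\mscrO_R(\ell)\times(0,T]$, hence $\mathcal L(w-v)\le0$ there. On the compact set $\bar\mscrO_R(\ell)$ the matrix $\Am\in C^1(\bar U)$ is uniformly positive definite and $\mathbf b$ is bounded (by \eqref{Aellip1}--\eqref{Abbound1}), so the classical weak maximum principle on the cylinder $\mscrO_R(\ell)\times(0,T]$ gives that $w-v$ attains its maximum on the parabolic boundary $\bigl[\bar\mscrO_R(\ell)\times\{0\}\bigr]\cup\bigl[(\Sph_{\ell-R}\cup\Sph_{\ell+R})\times[0,T]\bigr]$.

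Next I estimate $w-v$ on that parabolic boundary. On the bottom, $w(\x,0)\le0$ while $v(\x,0)=M\bigl(1-e^{-\psi(\x)}\bigr)\ge0$, because $\psi\ge0$ (in \eqref{newphi} the integrand $(r-\ell)\phi(r)$ has the sign of $r-\ell$, so the integral is nonnegative), hence $w-v\le0$ there. On the two bounding spheres $|\x|=\ell\pm R$ we have $w\le M$, so $w-v\le M\mathcal W(\x,t)$; moreover, using $\phi(r)\ge C_1$ from \eqref{phiest0} together with \eqref{hatScond} and $\kaptw=C_0/(2C_1)$ from \eqref{newphi}, one gets $\psi(\x)\ge\tfrac{C_0}{4}R^2=:a$ on these spheres (the factor $C_1$ cancels), whence $\mathcal W(\x,t)\le h(t+1)$ with $h(\tau)=\tau^{-s}e^{-a/\tau}$. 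The function $h$ has a single critical point $\tau^\ast=a/s$ and is increasing for $\tau<\tau^\ast$; choosing $R=C_4(1+T)$ with $C_4$ as in \eqref{RT} makes $\tau^\ast$ dominate $T+1$ by the margin encoded in $C_4$, which yields $\max_{\tau\in[1,T+1]}h(\tau)\le 2^{-s}(T+1)^{-s}$. Combining the two pieces, $\max_{\partial_p}(w-v)\le M\,2^{-s}(T+1)^{-s}$, and hence $w-v\le M\,2^{-s}(T+1)^{-s}$ on all of $\bar\mscrO_R(\ell)\times[0,T]$.

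Finally, restrict to $\Sph_\ell\times[0,T]$, where $\psi\equiv0$, so $\mathcal W(\x,t)=(t+1)^{-s}$ and $v(\x,t)=M\bigl(1-(t+1)^{-s}\bigr)$. Using $(t+1)^{-s}\ge(T+1)^{-s}$ for $0\le t\le T$ we obtain $w(\x,t)\le M\bigl(1-(1-2^{-s})(T+1)^{-s}\bigr)$ on $\Sph_\ell\times[0,T]$, and it remains to check that this is $\le M/(1+\eta_0)$, i.e.\ that $\eta_0\le(1-2^{-s})(T+1)^{-s}$. This holds for $\eta_0=\eta_0(T)$ in \eqref{etaT}: from $s=C_3+C_5(T+1)$ and $C_4\ge1$ one has $C_5(T+1)\le s\le 2C_5(T+1)$, hence $(T+1)^{-2C_5(T+1)}\le(T+1)^{-s}$ and $1-2^{-C_5(T+1)}\le1-2^{-s}$, so $\eta_0\le(1-2^{-s})(T+1)^{-s}$. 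This establishes \eqref{Comp}.

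The step I expect to be the main obstacle is the quantitative calibration in the second paragraph: choosing $C_4$ (hence $R(T)$ and $s_R$) and $\eta_0(T)$ so that $\max_{\tau\in[1,T+1]}\tau^{-s}e^{-a/\tau}$ is genuinely smaller than $(T+1)^{-s}$ by the amount needed. This comes down to locating $\tau^\ast=a/s=\tfrac{C_0R^2}{4C_3(1+R)}$ relative to the interval $[1,T+1]$ and then tracking the relations among $s$, $R$, and the constants $C_3,C_4,C_5$; by contrast the linearity of $\mathcal L$, the weak maximum principle, the sign of $\psi$, and the monotonicity of $h$ are all routine.
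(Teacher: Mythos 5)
Your approach is structurally identical to the paper's: you use the same barrier $\mathcal W$ from Lemma~\ref{Sub-sol}, compare against $v=M(1-\mathcal W)$ (the paper uses $\widetilde W=M_\ell(1-\mathcal W+\eta)$, which is the same construction with the residual $\eta$ absorbed into the comparison function rather than tracked separately), apply the classical maximum principle on the shell, and then convert the resulting bound on $\Sph_\ell$ into \eqref{Comp}. One point where you are actually more careful than the paper: on the lateral boundary $|\x|=\ell\pm R$ you correctly invoke the \emph{lower} bound $\psi\ge\kaptw C_1 R^2/2=C_0R^2/4$, which is what is needed to bound $\mathcal W$ from above there; the paper at this step cites the upper bound \eqref{psibound} $\psi\le\kaptw C_0R^2/2$, which controls $\mathcal W$ from below and would not by itself give the required comparison (this looks like a slip in the paper's write-up).

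However, precisely because you use $a=C_0R^2/4$ rather than the paper's $\kaptw C_0 R^2/2$, your quantitative calibration does not close with the constant $C_4$ as stated. You assert that $C_4$ from \eqref{RT} forces $\max_{\tau\in[1,T+1]}h(\tau)\le 2^{-s}(T+1)^{-s}$. Grinding through this with $a=C_0R^2/4$, $s=C_3(1+R)$, $R=C_4(T+1)$, $C_4\ge1$ gives the requirement $C_4\ge 16C_3/(eC_0)$, equivalently $C_4\ge 8C_3/(\kaptw eC_1)$. The lemma's $C_4=\max\{1,8C_3/(\kaptw eC_0)\}$ has $C_0$ in the denominator where your argument needs $C_1$; since $C_0=d_4\mu_1> d_0\mu_2=C_1$, the stated $C_4$ can be strictly smaller than what your bound requires, and the ``max with $1$'' does not rescue it in general. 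So the step you flagged as ``the main obstacle'' is indeed a genuine gap: the inequality $\max_{[1,T+1]}h\le 2^{-s}(T+1)^{-s}$ is not verified and in fact need not hold for the $C_4$ of \eqref{RT}. (The mismatch traces back to the $\psi$-bound issue: the paper's $C_4$ was calibrated to the upper bound $\kaptw C_0R^2/2$, while the argument as you correctly set it up needs the lower bound $\kaptw C_1R^2/2$, forcing $C_1$ in place of $C_0$ in the definition of $C_4$.) To repair your proof one should either replace $C_4$ by $\max\{1,8C_3/(\kaptw eC_1)\}$, or explicitly track the $C_0$ vs.\ $C_1$ discrepancy.
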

\begin{proof}
Denote
\begin{align*}
M_\ell &=\max\big\{ 0, \sup_{ \bar \mscrO_R(\ell)\times[0,T] }  w(\x, t)|\big\}  \quad \text{and}\quad
m_\ell =\max\big\{ 0,\sup_{ \Sph_\ell\times [0,T] }  w(\x, t)\big\}.
\end{align*}
Let $\mathcal W$ be defined as in Lemma \ref{Sub-sol}.
Let $\eta>0 $ chosen later and define 
\beqs
\widetilde W(\x,t) = M_\ell ( 1- \mathcal W(\x,t) +\eta),
\eeqs
then $ \mathcal L \widetilde {\mathcal W}\ge 0 $ in $\mscrO_R(\ell)\times (0,T]$. 
We have  
\beq\label{b1}
\widetilde W(\x,0)  =M_\ell(1- e^{-\psi(\x) }+\eta )\ge 0\ge w(\x,0).
\eeq
By \eqref{psibound}, $ \psi(\x) \le \kaptw C_0 R^2/2 $ when $|\x|=\ell\pm R$, hence
\beq\label{wt2}
\widetilde W(\x,t)|_{|\x|=\ell\pm R} \ge M_\ell \Big( 1- (t+1)^{-s} e^{-\frac {\kaptw C_0 R^2}{2(t+1) }}+\eta  \Big).
\eeq
Let $f(z)=z^{-s} e^{-\frac {\kaptw C_0 R^2}{2z}} $ for $z\ge 0$. 
Select $\eta=\max_{[0,\infty)}f(z)$.
Elementary calculations show 
$\eta = (\frac{2 s}{\kaptw eC_0 R^2} )^s $.
Then $t\in[0,T]$, it follows \eqref{wt2} that
\beq\label{b2} 
\widetilde W(\x,t)|_{ |\x| =\ell\pm R}  \ge  M_\ell\ge \max\{0, w(\x, t)|_{|\x| =\ell\pm R} \}.
\eeq
From \eqref{b1}, \eqref{b2} and maximum principle we obtain 
\beqs
\widetilde W(\x,t) \ge w(\x,t) \quad \text {on }  \bar \mscrO_R(\ell)\times(0,T).
\eeqs
Particularly, 
\beq\label{Wk0}
\widetilde W(\x,t)\ge w(\x,t)\quad \text {on }  \Sph_\ell\times(0,T).
\eeq
Moreover, since $\psi (\x)=0$ when $|\x|=\ell$, $\mathcal W(\x,t)\ge \frac1{(T+1)^s}$ thus 
\beq\label{Wk1}
\widetilde W(\x,t)|_{|\x|=\ell}  \le  M_\ell\Big[ 1- \frac 1 {(T+1)^s} +\eta\Big] .
\eeq 
Since $R\ge 1$, we easily estimate
\beqs
\eta = \Big[\frac{2C_3(1+R)}{\kaptw  e C_0 R^2} \Big]^{C_3(1+R)} \le \Big(\frac{4C_3 R}{\kaptw  e C_0 R^2} \Big)^{C_3(1+R)} \le  \Big(\frac{C_4}{2R}\Big)^{C_3(1+R)}. 
\eeqs
Hence
\beq\label{Tseta0}
\begin{aligned}
\frac 1 {(T+1)^s} -\eta &\ge \frac 1 {(T+1)^{C_3(1+R)}} - \Big(\frac{C_4}{2R}\Big)^{C_3(1+R)}
= \Big(1 -\frac 1 {2^{C_3(R+1)}}\Big)\frac 1{(T+1)^{C_3(1+R)}}\\
  &\ge \Big(1 -\frac 1 {2^{C_3R}}\Big)\frac 1{(T+1)^{2C_3R}}=\Big(1 -\frac 1 {2^{C_5(T+1)}}\Big)\frac 1{(T+1)^{2C_5(T+1)}}=\eta_0.
\end{aligned}
\eeq
From \eqref{Wk0}, \eqref{Wk1} and \eqref{Tseta0} we obtain 
$(1-\eta_0)M_\ell\ge m_\ell$,  thus, $M_\ell \ge \frac{m_\ell}{1-\eta_0}   \ge (1+\eta_0)m_\ell$,
which gives \eqref{Comp}. 
\end{proof}

\begin{lemma} \label{cyls}
Let $T>0$ and $R$, $\eta_0$ and $w(\x,t)$ be as in Lemma \ref{Cmp}. For $i\ge 1$, let 
 \beq\label{mi0}
\bar m_i = \max\big\{0, \sup_{ \Sph_{r_0+iR}\times[0,T] }  w(\x, t) \big\}.
\eeq

Part A (Dichotomy for one cylinder). Then for any $i\ge 1$, we have either of the following cases.
 \begin{itemize}
  \item[\rm (a)] If $\bar m_{i+1}\ge \bar m_{i-1}$, then $\bar m_{i+1}\ge (1+\eta_0)\bar m_i$. 
  \item[\rm (b)] If $\bar m_{i-1}\ge \bar m_{i+1}$, then $\bar m_{i-1}\ge (1+\eta_0)\bar m_i$.
 \end{itemize}

\medskip
Part B (Dichotomy for many cylinders). For any $k\ge 0$, we have the following two possibilities:
\begin{itemize}
 \item [\rm (i)] There is $i_0\ge k+1$ such that $\bar m_{i_0+j}\ge (1+\eta_0)^j\bar m_{i_0}$ for all $j\ge 0$.
 \item [\rm (ii)] For all $j\ge 0$, $\bar m_{k+j}\le (1+\eta_0)^{-j}\bar m_k$.
\end{itemize}
\end{lemma}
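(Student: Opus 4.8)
The plan is to deduce Part A directly from the lemma of growth in the spatial variables (Lemma \ref{Cmp}) applied on individual annular shells, and then to obtain Part B by iterating the dichotomy of Part A.

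\textbf{Part A.} Fix $i\ge 1$ and consider the shell $\bar{\mscrO}_R(\ell)$ with $\ell = r_0+iR$; note $\ell\ge R+r_0$, so Lemma \ref{Cmp} applies with this $\ell$. The boundary spheres of $\mscrO_R(\ell)$ are $\Sph_{\ell-R}=\Sph_{r_0+(i-1)R}$ and $\Sph_{\ell+R}=\Sph_{r_0+(i+1)R}$. I would first observe that, since $\mathcal L w\le 0$ on $U_T$ and $w(\x,0)\le 0$ on $\bar U$, the maximum principle on the cylinder $\bar{\mscrO}_R(\ell)\times[0,T]$ forces the nonnegative part of $\sup w$ over $\bar{\mscrO}_R(\ell)\times[0,T]$ to be attained on the lateral boundary, i.e.
\beq\label{cylsupbd}
\max\big\{0,\sup_{\bar{\mscrO}_R(\ell)\times[0,T]} w\big\}
 = \max\big\{0,\sup_{(\Sph_{\ell-R}\cup\Sph_{\ell+R})\times[0,T]} w\big\}
 = \max\{\bar m_{i-1},\bar m_{i+1}\}.
\eeq
Then Lemma \ref{Cmp}, whose left-hand side is exactly $m_\ell=\bar m_i$, gives
\beq\label{cylsgrowth}
\bar m_i \le \frac{1}{1+\eta_0}\,\max\{\bar m_{i-1},\bar m_{i+1}\}.
\eeq
If $\bar m_{i+1}\ge \bar m_{i-1}$, the right-hand side is $\bar m_{i+1}/(1+\eta_0)$, which rearranges to $\bar m_{i+1}\ge(1+\eta_0)\bar m_i$, case (a); symmetrically, if $\bar m_{i-1}\ge \bar m_{i+1}$ we get case (b). This completes Part A.

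\textbf{Part B.} Here I would argue by cases on the long-run behavior of the sequence $(\bar m_i)_{i\ge k}$. If the sequence is eventually nonincreasing from index $k$ on — more precisely, if $\bar m_{i+1}\le \bar m_i$ for all $i\ge k$ is \emph{not} what we need; rather the clean dichotomy is: either there exists some $i_0\ge k+1$ at which $\bar m_{i_0+1}\ge \bar m_{i_0-1}$ and moreover this ``upward'' alternative (a) persists for all subsequent indices, giving (i); or else for every $i\ge k+1$ the ``downward'' alternative (b) holds, which telescopes to (ii). To make this precise I would proceed as follows. Suppose (ii) fails, so there is a smallest $j\ge 1$ with $\bar m_{k+j} > (1+\eta_0)^{-j}\bar m_k$. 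One checks, using Part A at the indices $k+1,\dots,k+j-1$, that at some index $i_0$ in this range case (a) must occur (otherwise case (b) at each such index telescopes to $\bar m_{k+j}\le(1+\eta_0)^{-j}\bar m_k$, a contradiction). Once case (a) occurs at $i_0$, i.e. $\bar m_{i_0+1}\ge(1+\eta_0)\bar m_{i_0}\ge\bar m_{i_0}\ge\bar m_{i_0-1}$, I claim (a) propagates: by induction, if $\bar m_{i_0+j}\ge\bar m_{i_0+j-1}$ then $\bar m_{i_0+j}\ge\bar m_{(i_0+j)-1}$, so Part A at index $i_0+j$ cannot be case (b) unless $\bar m_{i_0+j-1}=\bar m_{i_0+j}$, and in either reading one obtains $\bar m_{i_0+j+1}\ge(1+\eta_0)\bar m_{i_0+j}$ — here one must handle the tie $\bar m_{i+1}=\bar m_{i-1}$, in which case both (a) and (b) hold and we simply choose (a). Iterating gives $\bar m_{i_0+j}\ge(1+\eta_0)^j\bar m_{i_0}$ for all $j\ge0$, which is (i). Conversely, if case (b) holds at every index $i\ge k+1$, then $\bar m_{k+j}\le(1+\eta_0)^{-1}\bar m_{k+j-1}$ for each $j\ge1$, and telescoping yields (ii).

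\textbf{Main obstacle.} The analytic content is entirely in Part A and is immediate from Lemma \ref{Cmp} together with the maximum-principle identity \eqref{cylsupbd}; the only subtlety there is making sure $\ell=r_0+iR\ge R+r_0$ so that Lemma \ref{Cmp} is applicable, which holds for $i\ge1$. The genuinely fiddly part is the bookkeeping in Part B: one must set up the propagation-of-alternative-(a) induction carefully, and in particular decide consistently how to resolve the boundary case $\bar m_{i+1}=\bar m_{i-1}$ where the dichotomy in Part A is not exclusive (both (a) and (b) are asserted, and we are free to invoke whichever is convenient). Handling monotone or eventually-constant sequences — e.g. $\bar m_i\equiv 0$ for large $i$, which falls under (ii) trivially — should be checked but presents no real difficulty.
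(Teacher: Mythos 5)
Your proof is correct and follows essentially the same approach as the paper's: Part A is the maximum principle on the shell $\bar\mscrO_R(\ell)\times[0,T]$ (pushing the positive supremum to the lateral spheres) combined with Lemma \ref{Cmp}, and Part B is the propagation-of-case-(a) argument, which you present by contraposition while the paper presents it as a direct dichotomy (``either some (a) occurs past $k$, or (b) holds everywhere past $k$''). One minor indexing slip in Part B: since each application of case (b) at index $i$ gives $\bar m_i\le(1+\eta_0)^{-1}\bar m_{i-1}$, deriving $\bar m_{k+j}\le(1+\eta_0)^{-j}\bar m_k$ requires (b) at the indices $k+1,\dots,k+j$ rather than $k+1,\dots,k+j-1$; this does not affect the argument, as the resulting $i_0$ still lies in $\{k+1,\dots,k+j\}$ and hence satisfies $i_0\ge k+1$ as required.
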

\begin{proof}
Part A.  By maximum principle,  
\begin{align*}
\sup_{ \bar \mscrO_R(r_0+iR)\times [0,T] }  w(\x, t) 
&\le \max\big\{ \sup_{\Sph_{r_0+(i\pm 1)R}\times [0,T]}  w(\x, t),  \sup_{\bar \mscrO_R(r_0+iR)}  w(\x, 0)  \big\}\\
&\le \max\big\{ \sup_{\Sph_{r_0+(i\pm 1)R}\times [0,T]}  w(\x, t),  0  \big\}
\le \max\{\bar m_{i-1}, \bar m_{i+1}  \}.  
\end{align*}
Hence,
\beq\label{supO}
\sup_{ \bar \mscrO_R(r_0+iR)\times [0,T] }  w(\x, t) 
\le \max\{\bar m_{i-1}, \bar m_{i+1}  \}.  
\eeq
Let $\ell=r_0+iR$.  Applying  Lemma ~\ref{Cmp} and \eqref{supO}, we obtain 
\beqs
\bar m_i \le \frac1 {1+\eta_0 } \max \big\{ 0, \sup_{ \bar \mscrO_R(r_0+iR)\times [0,T] }  w(\x, t) \big\} \le \frac{1}{1+\eta_0} \max\{\bar m_{i-1}, \bar m_{i+1}  \}.
\eeqs
Then the statements (a) and (b) obviously follow. 

\medskip
Part B. For $i<j$, define the cylinder 
\beqs
\mathcal C _{i,j}= \mscrO_{r_0+iR,r_0+jR}\times (0,T) =\{ (\x,t): r_0+iR < |\x| <  r_0+jR ,\ t\in (0,T) \}.
\eeqs
We say that (a) and (b) above are two cases for cylinder $\mathcal C_{i-1,i+1}$.

Let $k\ge 0$.  By Part A, we have either of the following two cases.
 
{\bf Case 1.} There is $i_0\ge k$ such that Case (a) holds for $\mathcal  C_{i_0, i_0+2}$, that is,
 \beq\label{a}  
  \bar m_{i_0+2}\ge \bar m_{i_0} \quad \text { and } \quad \bar m_{i_0+2}\ge (1+\eta_0)\bar m_{i_0+1}.
 \eeq
Then applying Part A to $\mathcal C _{i_0+1,i_0+3}$ we have either 
  \beq\label{aa}
  \text{Case (a) holds for $\mathcal C _{i_0+1,i_0+3}$, which gives } \bar m_{i_0+3} \ge   \bar m_{i_0+1} \text { and } \bar m_{i_0+3}\ge  (1+\eta_0) \bar m_{i_0+2},
  \eeq
or
\beq\label{ab} 
  \text{Case (b) holds for $\mathcal C _{i_0+1,i_0+3}$, which gives } \bar m_{i_0+1} \ge   \bar m_{i_0+3} \text { and } \bar m_{i_0+1}\ge (1+\eta_0)\bar m_{i_0+2}.
\eeq

Observe that \eqref{a} and \eqref{ab} hold simultaneously if only if  
\beq\label{seq0}
\bar m_{i_0}=\bar m_{i_0+1}=\bar m_{i_0+2}=\bar m_{i_0+3}=0,
\eeq 
which is a special case of \eqref{aa}. Hence we always have Case (a) for the next cylinder $\mathcal C _{i_0+1,i_0+3}$.
Then by induction, Case (a) holds for the cylinders $\mathcal C _{i_0+j-1,i_0+j+1}$ for all $j\ge 1$.
Thus,
\beq\label{ind}
\bar m_{i_0+j+1} \ge (1+\eta_0)\bar m_{i_0+j} \ge  (1+\eta_0)^2\bar m_{i_0+j-1} \ge \ldots\ge (1+\eta_0)^{j}\bar m_{i_0+1}.
\eeq 
Re-indexing $i_0+1$ by $i_0$ in \eqref{ind}, we obtain (i).   

{\bf Case 2.} For all $i\ge k$,   Case (b) holds for $\mathcal  C_{i, i+2}$, that is,
 $ \bar m_i \ge (1+\eta_0)\bar m_{i+1}$ for all $i\ge k$.
Therefore, 
  \beq \label{bb}
\bar m_k \ge (1+\eta_0)\bar m_{k+1} \ge (1+\eta_0)^2 \bar m_{k+2}  \ge \ldots\ge (1+\eta_0)^j \bar m_{k+j},
  \eeq 
 which implies (ii). 
\end{proof}

Using the above dichotomy, we obtain the behavior of a sub-solution $w$ as $|\x|\to\infty$.

\begin{proposition}\label{small0}  
 Assume $w\in C_{\x,t}^{2,1}(U_T)\cap C(\bar U_T)$ satisfies 
$w(\x,0)\le 0$ in $U$, $\mathcal Lw\le 0$ on $U_T$, and 
$w(\x,t)$ is bounded on $\bar U_T$.   
Then
\beq\label{assym0}
\limsup_{r\to\infty} (\sup_{\Sph_r\times[0,T]}w(\x,t))\le 0.
\eeq
\end{proposition}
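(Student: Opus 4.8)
The plan is to transfer the claim to the discrete sequence $\bar m_i$ of \eqref{mi0} and then extract a contradiction from the growth dichotomy already established. Fix $T>0$ and let $R=R(T)$, $\eta_0=\eta_0(T)$ be as in Lemma~\ref{Cmp}; the hypotheses imposed on $w$ here are exactly those required by Lemmas~\ref{Cmp} and~\ref{cyls}. Since $w$ is bounded on $\bar U_T$ we have $0\le\bar m_i\le\sup_{\bar U_T}|w|<\infty$ for every $i$, so $L:=\limsup_{i\to\infty}\bar m_i$ is a finite nonnegative number. First I would note that it suffices to prove $L=0$: for any $r$ with $|r-(r_0+iR)|\le R$ the sphere $\Sph_r$ lies inside $\bar\mscrO_R(r_0+iR)$, so the maximum principle estimate \eqref{supO} gives $\sup_{\Sph_r\times[0,T]}w\le\max\{\bar m_{i-1},\bar m_{i+1}\}$; letting $r\to\infty$ then yields $\limsup_{r\to\infty}\big(\sup_{\Sph_r\times[0,T]}w\big)\le L$, which is \eqref{assym0} once $L=0$ is known.

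To prove $L=0$, I would argue by contradiction and suppose $L>0$. For each $i$ at least one of the inequalities $\bar m_{i+1}\ge\bar m_{i-1}$, $\bar m_{i-1}\ge\bar m_{i+1}$ holds, so combining cases (a) and (b) of Lemma~\ref{cyls}, Part~A, gives
\[
(1+\eta_0)\,\bar m_i\le\max\{\bar m_{i-1},\bar m_{i+1}\}\qquad\text{for all }i\ge 1.
\]
Now pick a subsequence $i_n\to\infty$ with $\bar m_{i_n}\to L$ and fix $\varepsilon\in(0,L)$. For $n$ large we have $\bar m_{i_n}>L-\varepsilon$, while $\bar m_{i_n-1}<L+\varepsilon$ and $\bar m_{i_n+1}<L+\varepsilon$ by the definition of $\limsup$; plugging these into the displayed inequality gives $(1+\eta_0)(L-\varepsilon)<L+\varepsilon$. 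Since $\varepsilon>0$ may be taken arbitrarily small, $(1+\eta_0)L\le L$, i.e.\ $\eta_0 L\le 0$, contradicting $\eta_0>0$ and $L>0$. Hence $L=0$ and \eqref{assym0} follows.

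There is no deep obstacle once Lemmas~\ref{Cmp} and~\ref{cyls} are in hand; the whole argument is a short $\limsup$ manipulation built on the sub-mean-value inequality $(1+\eta_0)\bar m_i\le\max\{\bar m_{i-1},\bar m_{i+1}\}$. The only point I would state carefully is the reduction from the spheres $\Sph_r$ (continuous $r$) to the annular cylinders $\bar\mscrO_R(r_0+iR)$ (discrete $i$): one must check that every sufficiently large sphere sits inside one such cylinder so that \eqref{supO} is applicable there, which is immediate since $|r-(r_0+iR)|\le R$ holds for $i=\lfloor (r-r_0)/R\rfloor$.
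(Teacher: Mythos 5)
Your proof is correct, and it takes a genuinely different route from the paper's. The paper argues by a two-way case split: if infinitely many $\bar m_i$ vanish, a maximum-principle sandwich between consecutive zeros gives $w\le 0$ for large $|\x|$; otherwise $\bar m_i>0$ eventually, and Part~B of Lemma~\ref{cyls} forces either exponential \emph{growth} of $\bar m_{i_0+j}$ (ruled out by boundedness of $w$) or exponential \emph{decay} $\bar m_{N+j}\le(1+\eta_0)^{-j}\bar m_N\to0$. You instead use only the one-step sub-mean-value inequality $(1+\eta_0)\bar m_i\le\max\{\bar m_{i-1},\bar m_{i+1}\}$ — which is exactly what underlies Part~A and is correctly extracted from the statement of (a) and (b) since one of the two hypotheses always holds — together with a direct $\limsup$ contradiction. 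This bypasses Part~B entirely, handles the ``infinitely many zeros'' case automatically (it is absorbed into $L=0$), and is shorter and more elementary. What the paper's route buys that yours does not is the quantitative exponential rate $\bar m_{N+j}\le(1+\eta_0)^{-j}\bar m_N$, but that rate is not needed for the proposition as stated. Your reduction from continuous $r$ to discrete $i$ via $\Sph_r\subset\bar\mscrO_R(r_0+iR)$ and \eqref{supO} is also correctly handled, including the observation that one must take $i\ge1$ so that $\ell=r_0+iR\ge r_0+R$ as required by Lemma~\ref{Cmp}.
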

\begin{proof} 
Let $\bar m_i$ be defined as in Lemma \ref{cyls}.

{\bf Case 1}: There are infinitely many $i$ such that $\bar m_i = 0$. Then there is a sequence $\{i_l\}$ increasing to  $\infty$ as $l\to \infty$ such that $\bar m_{i_l} =0$ for all $l\ge 1$. 
Then by maximum principle for cylinder $\mathcal C_{i_l,i_{l+1}}$ we have $w(\x,t)\le 0$ on $\mathcal C_{i_l,i_{l+1}}$ for all $l\ge 1$.
Therefore $w(\x, t) \le 0$ in $\{|\x|\ge r_0 +i_1 R\}\times [0,T]$.
This gives \eqref{assym0}. 

{\bf Case 2}: There are only finitely many $i$ such that $\bar m_i = 0$. Then there is $N>0$ such that $\bar m_i > 0$ for all $i\ge N$. 
We apply part B of Lemma \ref{cyls} to $k=N$.
If (i) holds, then there is $i_0\ge N+1$ such that $\bar m_{i_0+j}\ge (1+\eta_0)^j\bar m_{i_0}>0$ for all $j\ge 0$; thus, $\lim_{j\to\infty} \bar m_{i_0+j}=\infty$ which contradicts $w(\x,t)$ being bounded on $U_T$.  Hence we must have (ii), that is, for all $j\ge 0$, $\bar m_{N+j}\le (1+\eta_0)^{-j}\bar m_N$. Therefore, $ \lim_{j\to\infty} \bar m_{N+j} =0$ which, in combining with \eqref{supO}, proves \eqref{assym0}.
\end{proof}

As for solutions of the IBVP \eqref{nonhom} in a finite time interval, we have the following.

\begin{theorem}\label{small1} 
Let $w\in C_{\x,t}^{2,1}(U_T)\cap C(\bar U_T)$  be a bounded solution of \eqref{nonhom} on $U_T$ with $\fo\in C(\bar U_T)$. 
If
\beq\label{w0lim}
\lim_{|\x|\to\infty} w_0(\x)=0,
\eeq
\beq\label{focond}
\lim_{|\x|\to\infty} \sup_{0\le t\le T} |\fo(\x,t))| =0, 
\eeq    
then
\beq\label{assym3}
\lim_{r\to\infty} \Big(\sup_{\Sph_r\times[0,T]}| w(\x,t) |\Big)=0.
\eeq
\end{theorem}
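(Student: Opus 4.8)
The plan is to deduce \eqref{assym3} from Proposition~\ref{small0} by a trimming argument applied separately to $w$ and to $-w$. First I would reduce matters to the one-sided statement: for every bounded solution of \eqref{nonhom} on $U_T$ with $\fo\in C(\bar U_T)$ and data satisfying \eqref{w0lim}--\eqref{focond}, one has $\limsup_{r\to\infty}\big(\sup_{\Sph_r\times[0,T]} w(\x,t)\big)\le 0$. Granting this, apply it to $-w$, which solves \eqref{nonhom} with $\fo,w_0,G$ replaced by $-\fo,-w_0,-G$ --- all still bounded, and $-\fo,-w_0$ still decaying at infinity --- to get $\limsup_{r\to\infty}\big(\sup_{\Sph_r\times[0,T]}(-w)\big)\le 0$. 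Since $\sup_{\Sph_r\times[0,T]}|w|=\max\{\sup_{\Sph_r\times[0,T]}w,\ \sup_{\Sph_r\times[0,T]}(-w)\}$ and this quantity is nonnegative, the two one-sided bounds force $\lim_{r\to\infty}\big(\sup_{\Sph_r\times[0,T]}|w|\big)=0$, which is \eqref{assym3}.

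To prove the one-sided estimate, fix $\varepsilon>0$. By \eqref{w0lim} and \eqref{focond} choose $\rho>r_0$ so large that $|w_0(\x)|\le\varepsilon$ and $\sup_{0\le t\le T}|\fo(\x,t)|\le\varepsilon$ for all $|\x|\ge\rho$. On the shifted outer domain $\mscrO_\rho=\{|\x|>\rho\}$ set $\tilde w(\x,t)=w(\x,t)-\varepsilon(t+1)$. Then $\tilde w\in C^{2,1}_{\x,t}(\mscrO_\rho\times(0,T])\cap C(\bar\mscrO_\rho\times[0,T])$ is bounded, and by the choice of $\rho$ it satisfies $\tilde w(\x,0)=w_0(\x)-\varepsilon\le 0$ on $\bar\mscrO_\rho$ and $\mathcal L\tilde w=\fo-\varepsilon\le 0$ on $\mscrO_\rho\times(0,T]$. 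These are precisely the hypotheses of Proposition~\ref{small0}, only with the inner radius $\rho$ in place of $r_0$; since the proofs of Proposition~\ref{small0} and of the underlying Lemmas~\ref{Sub-sol}, \ref{Cmp}, \ref{cyls} use nothing about $r_0$ beyond the bounds \eqref{Aellip1}, \eqref{Abbound1} (which hold on all of $\{|\x|\ge r_0\}\supset\{|\x|\ge\rho\}$), the structure of $\mathcal L$, and boundedness of the sub-solution, the conclusion applies verbatim: $\limsup_{r\to\infty}\big(\sup_{\Sph_r\times[0,T]}\tilde w\big)\le 0$, i.e. $\limsup_{r\to\infty}\big(\sup_{\Sph_r\times[0,T]}w\big)\le\varepsilon(T+1)$. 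Letting $\varepsilon\downarrow 0$ yields the one-sided estimate.

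The only point requiring care is the passage to the inner radius $\rho$: one should observe that Proposition~\ref{small0} is really a statement about an arbitrary outer domain $\{|\x|>\rho\}$ with $\rho\ge r_0$, and that the constants $R(T)$, $\eta_0(T)$ and $C_0,\dots,C_5$ governing the barriers of Lemmas~\ref{Sub-sol}--\ref{cyls} depend only on $n$, on the fixed bounds $\underline s,\bar s$ of \eqref{hatScond}, and on values $g_i,g_i'$ evaluated at the inner radius --- all of which behave harmlessly (indeed monotonically in $\rho$) as the radius increases, so the same constants may even be retained. \textbf{This bookkeeping is the main (and only mild) obstacle}: there is no new analytic difficulty, since the decay at spatial infinity is entirely supplied by the geometric dichotomy of Lemma~\ref{cyls}, and the inhomogeneity $\fo$ together with the initial data $w_0$ are absorbed once and for all by the elementary linear term $\varepsilon(t+1)$ after the domain has been shifted far enough out.
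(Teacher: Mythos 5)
Your proof is correct and follows essentially the same route as the paper: subtract the linear barrier $\varepsilon(t+1)$ from $\pm w$, apply Proposition~\ref{small0} on a shifted outer domain $\{|\x|>\tilde r_0\}$ where the data are small, and let $\varepsilon\to 0$. The only difference is that you spell out the bookkeeping justifying the replacement of $r_0$ by the larger inner radius, which the paper leaves implicit.
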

\begin{proof}
Note that $w_0\in C(\bar U)$, $G\in C(\Gamma\times[0,T])$.
By Theorem~\ref{maximumprinciple},  $w(\x,t)$ is bounded on $\bar U_T$.  Let $\varep$ be an arbitrary positive number. 
There is $\tilde r_0>0$ such that for $|\x|>\tilde r_0$ we have 
\beq
 |w_0(\x)|<\varep\quad   \text{ and }\quad \sup_{0\le t\le T}  |\fo(\x,t)| < \varep.  
\eeq

Let $\tilde w =\pm w -\varep(t+1)$ then $\tilde w$ is bounded on $\bar U_T$ and $\mathcal L \tilde w <0 $ on $\mscrO_{\tilde r_0}\times(0,T]$, and $\tilde w(\x,0)\le 0$ on $\bar \mscrO_{\tilde r_0}$.  Applying Proposition~\ref{small0} to $\tilde w$ with $r_0$ being replaced by $\tilde r_0$ gives 
\beqs
\limsup_{r\to\infty} (\sup_{\Sph_r\times[0,T]}\tilde w(\x,t))\le 0.
\eeqs 
This implies  
\beqs
\limsup_{r\to\infty} (\sup_{\Sph_r\times[0,T]}[\pm w(\x,t)])\le \varep(T+1).
\eeqs 
Therefore,
\beqs
 \limsup_{r\to\infty} (\sup_{\Sph_r\times[0,T]} |w(\x,t)| ) \le \varep (T+1).
\eeqs
Letting $\varep\to 0$ we obtain \eqref{assym3}.
\end{proof}

We now consider problem \eqref{nonhom} for all $t>0$ under condition \eqref{w0lim}.
Although it is not known whether $\lim_{t\to\infty}w(\x,t)$ exists for each $\x$,  we prove in the corollary below that such limit is zero along some curve $\x(t)$ which goes to infinity as $t\to\infty$.

 
 \begin{corollary}\label{Reg1}  
Let $w(\x,t)\in C_{\x,t}^{2,1}(D)\cap C(\bar D)$ be a bounded solution of \eqref{nonhom} on $D$
with $\fo\in C(\bar D)$.
Assume $w_0\in C(\bar U)$ satisfies \eqref{w0lim}, $G\in C(\Gamma\times[0,\infty))$ is bounded, and \eqref{focond} holds for each $T>0$.
Then there exists an increasing, continuous function $r(t)>0$ satisfying $\lim_{t\to\infty} r(t) =\infty$ such that
  \beq\label{limit0} 
    \lim_{ t\to\infty}\Big( \sup_{\x\in \bar \mscrO_{r(t)}} |w(\x,t)|\Big)=0.
 \eeq
\end{corollary}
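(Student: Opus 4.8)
The plan is to derive \eqref{limit0} from Theorem \ref{small1} by applying it on each finite time window $[0,m]$, $m\in\N$, and then stitching the resulting radii together into a single continuous, increasing function $r(t)$ via a diagonal-type construction. The analytic work is already done in Theorem \ref{small1} (hence, ultimately, in the lemma of growth in the spatial variables, Lemma \ref{Cmp}); what remains is a packaging argument.

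First I would record the following mild strengthening of \eqref{assym3}: for every fixed $T>0$,
\beq
\lim_{\rho\to\infty}\Big(\sup_{\bar\mscrO_\rho\times[0,T]}|w(\x,t)|\Big)=0.
\eeq
This is immediate from Theorem \ref{small1}, because $\bar\mscrO_\rho\times[0,T]=\bigcup_{r\ge\rho}\big(\Sph_r\times[0,T]\big)$, so the displayed supremum equals $\sup_{r\ge\rho}\big(\sup_{\Sph_r\times[0,T]}|w|\big)$, which tends to $0$ by \eqref{assym3}. The hypotheses of Theorem \ref{small1} are met on $U_T$ for each $T>0$: the restriction of $w$ is a bounded solution of \eqref{nonhom} on $U_T$ with $\fo\in C(\bar U_T)$, and \eqref{w0lim}, \eqref{focond} are assumed. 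Applying this with $T=m$ and accuracy $1/m$ yields, for each $m\in\N$, a radius $\rho_m>0$ with
\beq
\sup_{\bar\mscrO_{\rho_m}\times[0,m]}|w(\x,t)|<\frac1m.
\eeq
Replacing $\rho_m$ by $\max\{\rho_1,\dots,\rho_m,m\}$ only shrinks the region $\bar\mscrO_{\rho_m}$, so the inequality above is preserved, and we may assume $(\rho_m)_{m\ge1}$ is strictly increasing with $\rho_m\to\infty$.

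Next I would set $r(t)$ to be the continuous, piecewise-linear function determined by $r(0)=\rho_1$, $r(m)=\rho_{m+1}$ for $m\ge1$, and affine on each interval $[m-1,m]$. Then $r$ is positive, continuous, strictly increasing (since $(\rho_m)$ is), and $r(t)\to\infty$ as $t\to\infty$. To verify \eqref{limit0}, fix $t\ge1$ and put $m=\lceil t\rceil$, so $m-1\le t\le m$ and $r(t)\ge r(m-1)=\rho_m$; hence $\bar\mscrO_{r(t)}\times\{t\}\subset\bar\mscrO_{\rho_m}\times[0,m]$ and therefore
\beq
\sup_{\x\in\bar\mscrO_{r(t)}}|w(\x,t)|\le \sup_{\bar\mscrO_{\rho_m}\times[0,m]}|w|<\frac1m=\frac1{\lceil t\rceil}.
\eeq
Letting $t\to\infty$ gives \eqref{limit0}.

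I do not expect a genuine obstacle here: the only point requiring a little care is arranging $(\rho_m)$ to be increasing while keeping the decay bound valid, and then interpolating to obtain a continuous, monotone $r(t)$ for which the time window $[0,\lceil t\rceil]$ still contains the slice at time $t$. Everything else is a direct citation of Theorem \ref{small1}.
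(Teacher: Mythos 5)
Your proposal is correct and follows essentially the same route as the paper: both extract from Theorem \ref{small1} a strictly increasing sequence of radii $\rho_m\to\infty$ with $\sup_{\bar\mscrO_{\rho_m}\times[0,m]}|w|<1/m$, then interpolate piecewise linearly to obtain the continuous increasing $r(t)$ and verify the decay by locating each $t$ in an integer interval $[m-1,m]$. The only cosmetic difference is that you first spell out that \eqref{assym3} upgrades to $\lim_{\rho\to\infty}\sup_{\bar\mscrO_\rho\times[0,T]}|w|=0$, a step the paper leaves implicit.
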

 \begin{proof} 
By Theorem \ref{small1},  there exists a strictly increasing sequence $\{r_k\}_{k=1}^\infty$ of positive numbers such that $\lim_{k\to\infty}r_k=\infty$ and
\beq\label{reg}
\sup_{\{\x:|\x|\ge r_k\}\times [0,k]}| w(\x,t) |<\frac 1 k .
\eeq
Let $r(t)$ be the piecewise linear function passing through the points $(k,r_{k+1})$ then $r(t)$ is increasing and  $r(t)\to\infty$ as $t\to\infty$.
By \eqref{reg}, for each $k$ we have
\beqs
\sup\{ |w(\x,t)|: k\le t\le k+1, |\x|\ge r(t)\}\le \sup_{\{\x:|\x|\ge r_{k+1}\}\times [0,k+1]} |w(\x,t)|<\frac 1 {k+1}.
\eeqs
 Taking $k\to\infty$ we obtain \eqref{limit0}. 
 \end{proof}

\medskip
We now return to the IBVP \eqref{sigEq} for $\sigma$. We will use the transformation $\sigma = we^{\Lambda}$. To compare $\sigma$ and $w$, we need to estimate $\Lambda(\x)$. Recall from \eqref{Lambdform} that
\begin{align*}
\Lambda (\x) 
&=\int_{r_0}^{|\x|} \tilde F(r)dr,\text{ where } \tilde F(r)= F'_2(\hat S(r)) g_2(\frac{|c_2|}{r^{n-1}} )\frac{c_2}{r^{n-1}} - F'_1(\hat S(r)) g_1(\frac{|c_1|}{r^{n-1}} )\frac{c_1}{r^{n-1}}.
\end{align*}
For $R$ sufficiently large and $r\ge R$, we have $ | \tilde F(r)|\le C r^{1-n}$. 
Then we have in the case $n\ge 3$ that  $| \tilde F(r)|\le C r^{-2} $, hence $|\Lambda(\x)|\le C_6$ for all $|\x|\ge r_0$, and 
\beq\label{bdness}
0<C_7^{-1}\le e^{\Lambda(\x)}\le C_7 \quad\forall |\x|\ge r_0.
\eeq


\begin{theorem}\label{uniqS3}
Let $n\ge 3$. Assume \condEa\ and 
\beq\label{Delele}
\deltaele \eqdef \max\{\sup_{U }| \sigma_0(\x)|,\sup_{\Gamma\times[0,\infty)}|g(\x,t)|\} <\infty, 
\eeq    
\beq\label{tiny1}
 \deltaei \eqdef \sup_{D} | \nabla\cdot(\Am(\x) \mathbf c(\x,t))| <\infty.
\eeq
Then,

{\rm (i)} There exists a solution $\sigma(\x,t)\in C^{2,1}_{\x,t}(D)\cap C(\bar  D)$ of problem \eqref{sigEq}. This solution is unique in class of solutions $\sigma(\x,t)$ that satisfy
\beq
\sup_{ U\times [0,T]} |\sigma(\x,t)|<\infty\quad \text{for any } T>0.
\eeq

{\rm (ii)} There is $C>0$ such that for $(\x,t)\in D$,
\beq\label{supbound}
 |\sigma(\x,t)|\le C\big[ \deltaele + \deltaei (t+1)\big] .
\eeq

{\rm (iii)} In addition,  if
\beq\label{sigzerocond}
 \lim_{|\x|\to\infty}  \sigma_0(\x) =0
\quad \text{and}\quad 
\lim_{|\x|\to\infty} \sup_{0\le t\le T} |\nabla\cdot(\Am(\x)\mathbf c(\x,t))| =0 \text{ for each }T>0, 
\eeq    
then
\beq\label{limzero}
\lim_{r\to\infty} \Big(\sup_{\Sph_r\times [0,T]}| \sigma(\x,t) |\Big)=0 \quad \text{for any } T>0,
\eeq
and furthermore, there is a continuous, increasing function $r(t)>0$ with $\lim_{t\to\infty}r(t)=\infty$ such that
\beq\label{sigmaCurve}
 \lim_{t\to\infty} \Big(\sup_{\x\in\bar\mscrO_{r(t)}} |\sigma(\x,t)| \Big) =0.
\eeq
\end{theorem}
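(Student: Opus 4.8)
The plan is to reduce all three parts to results already established for the transformed equation \eqref{nonhom}, via the substitution $\sigma = e^{\Lambda}w$ (equivalently $w = e^{-\Lambda}\sigma$, as in \eqref{w-sig}). The one ingredient that makes this reduction lossless is the two-sided bound \eqref{bdness}, $C_7^{-1}\le e^{\Lambda(\x)}\le C_7$ for all $|\x|\ge r_0$; this is exactly where the hypothesis $n\ge 3$ enters, since then $|\tilde F(r)|\le Cr^{-2}$ forces $\Lambda$ to be bounded on $\bar U$. Throughout I will use that, by \eqref{hatScond}, the constants $\mu_i$, $C_j$, $C_7$ and the bounds \eqref{Aellip1}, \eqref{Abbound1}, \eqref{smoooth1} are uniform in $\x$ over the unbounded domain $U$.

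For part (i), observe that $\sigma$ solves \eqref{sigEq} if and only if $w = e^{-\Lambda}\sigma$ solves \eqref{nonhom} with $w_0 = e^{-\Lambda}\sigma_0$, $G = e^{-\Lambda}g$, and $\fo = e^{-\Lambda}\nabla\cdot(\Am\mathbf c)$. First I would verify the hypotheses of Theorem \ref{Euniq}: Condition \condEb\ holds under \condEa\ (the regularity of $F_1,F_2$ is the same, while $w_0\in C(\bar U)$, $G\in C(\Gamma\times[0,\infty))$ and the compatibility $G(\x,0)=w_0(\x)$ on $\Gamma$ all follow from \eqref{datacond} and the continuity of $\Lambda$); the required spatial regularity of $\fo$ and $\partial_t\fo$ follows from the formulas \eqref{mtxB}, \eqref{bdef}, \eqref{cdef}, \eqref{derA} and Lemma \ref{smooth} applied on bounded annuli exhausting $U$ (which stay away from the origin) together with \condEa; and $w_0,G,\fo$ are bounded thanks to \eqref{Delele}, \eqref{tiny1} and \eqref{bdness}. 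Then Theorem \ref{Euniq}(i)--(ii) yields a unique (among locally-in-time bounded solutions) $w\in C^{2,1}_{\x,t}(D)\cap C(\bar D)$ of \eqref{nonhom}, and $\sigma = e^{\Lambda}w$ is the desired solution of \eqref{sigEq}; uniqueness in the stated class transfers back through \eqref{bdness}.

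For part (ii), Theorem \ref{Euniq}(iii) gives $|w(\x,t)|\le \deltaten + \deltazero(t+1)$, while \eqref{bdness} gives $\deltaten\le C_7\deltaele$ and $\deltazero\le C_7\deltaei$; hence
\beqs
|\sigma(\x,t)| = e^{\Lambda(\x)}|w(\x,t)| \le C_7\big(\deltaten + \deltazero(t+1)\big) \le C_7^2\big(\deltaele + \deltaei(t+1)\big),
\eeqs
which is \eqref{supbound} with $C=C_7^2$. For part (iii), under \eqref{sigzerocond} the bound \eqref{bdness} yields $\lim_{|\x|\to\infty}w_0(\x)=0$ and $\lim_{|\x|\to\infty}\sup_{0\le t\le T}|\fo(\x,t)|=0$ for each $T>0$; applying Theorem \ref{small1} gives $\lim_{r\to\infty}\sup_{\Sph_r\times[0,T]}|w(\x,t)|=0$, and multiplying by the bounded factor $e^{\Lambda}$ gives \eqref{limzero}. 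Finally, Corollary \ref{Reg1} produces an increasing, continuous $r(t)>0$ with $r(t)\to\infty$ such that $\lim_{t\to\infty}\sup_{\x\in\bar\mscrO_{r(t)}}|w(\x,t)|=0$; the same $r(t)$ works for $\sigma$ after multiplying by $e^{\Lambda}$, giving \eqref{sigmaCurve}.

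The main (and essentially the only) point requiring care is the regularity bookkeeping in part (i): one must track how many derivatives survive the chain $V\mapsto\mathbf c\mapsto\Am\mathbf c\mapsto\nabla\cdot(\Am\mathbf c)\mapsto e^{-\Lambda}\nabla\cdot(\Am\mathbf c)=\fo$, which is precisely why \condEa\ imposes as many as seven derivatives on $F_1,F_2$ and six spatial derivatives on $V$, and one must confirm that the various constants stay uniform over the unbounded domain $U$ — both of which are guaranteed by \condEa\ and \eqref{hatScond}. Everything else is a direct citation of the theory already developed for \eqref{nonhom}.
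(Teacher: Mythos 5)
Your proof is correct and follows exactly the paper's approach: transform via $\sigma = e^{\Lambda}w$, invoke Theorems \ref{Euniq}, \ref{small1} and Corollary \ref{Reg1} for the transformed problem \eqref{nonhom}, and transfer the conclusions back using the two-sided bound \eqref{bdness} (which is where $n\ge 3$ enters). You have in fact supplied more of the regularity bookkeeping than the paper, which simply states the reduction and omits the details.
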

\begin{proof}
  Let  $w_0(\x)=\sigma_0(\x) e^{-\Lambda(\x)}$, $G(\x,t)=g(\x,t) e^{-\Lambda(\x)}$ and $\fo(x,t)=e^{-\Lambda(\x)}|\nabla\cdot(\Am(\x) \mathbf c(\x,t))|$.
Thanks to \eqref{bdness} and \eqref{Delele}, we have 
  \beqs
   \max\{\sup_{U }| w_0(\x)|,\sup_{\Gamma\times [0,\infty)}|w(\x,t|\}  \le  C \deltaele,
  \eeqs
\beqs
\sup_{D}  |\fo| \le C\deltaei. 
\eeqs
Then statements in (i), (ii) and (iii) follow directly from Theorems \ref{Euniq} and \ref{small1}, and Corollary \ref{Reg1} for problem \eqref{nonhom}, the relation $\sigma(\x,t)=w(\x,t) e^{\Lambda(\x)}$ and the boundedness of $e^{\Lambda(\x) }$ in \eqref{bdness}. We omit the details.
\end{proof}

As a consequence of \eqref{sigmaCurve},  for any continuous curve $\x(t)$ with $|\x(t)|\ge r(t)$, one has
\beq
\lim_{t\to\infty} \sigma(\x(t),t)=0.
\eeq

The case $n=2$ is treated next with some restriction on the steady state.

\begin{theorem}\label{sta1}
Let $n=2$ and  $\hat S(r)$ be a solution of \eqref{IVP} with $c_1,c_2<0$.
Assume \condEa\  and 
\beq\label{tilM2}
\deltatwe \eqdef \max\{\sup_{ U }e^{-\Lambda(\x)} | \sigma_0(\x)|, \sup_{\Gamma\times [0,\infty)}|g(\x,t)|\} <\infty,
\eeq    
\beq\label{n2cond}
\deltani \eqdef \sup_{D} e^{-\Lambda {(\x)}}| \nabla\cdot(\Am(\x) \mathbf c(\x,t))| <\infty.
\eeq

Then the following statements hold true.

{\rm (i)} There exists a solution $\sigma(\x,t)\in C^{2,1}_{\x,t}(D)\cap C(\bar D)$ of problem \eqref{sigEq}. 
This solution is unique in class of solutions $\sigma(\x,t)$ that satisfy
\beq
\sup_{U\times [0,T]} e^{-\Lambda(\x)} |\sigma(\x,t)|<\infty\quad \text{for any } T>0.
\eeq

{\rm (ii)} There is $C>0$ such that for $(\x,t)\in D$,
 \beqs
|\sigma(\x,t)|\le C\big[\deltatwe + \deltani (t+1)\big].
 \eeqs
 
{\rm (iii) } Statement {\rm (iii)} of Theorem \ref{uniqS3} holds true if condition \eqref{sigzerocond} is replaced by 
\beq
 \lim_{|\x|\to\infty}  e^{-\Lambda(\x)}\sigma_0(\x) =0 \quad\text{and}\quad \lim_{|\x|\to\infty} \sup_{0\le t\le T} e^{-\Lambda(\x)} |\nabla\cdot(\Am(\x)\mathbf c(\x,t))| =0 \text{ for each }T>0. 
\eeq    
\end{theorem}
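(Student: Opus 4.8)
The plan is to carry over the reduction used for Theorem~\ref{uniqS3}: put $w(\x,t)=e^{-\Lambda(\x)}\sigma(\x,t)$, so that by \eqref{neweq} the function $w$ solves the IBVP \eqref{nonhom} with
\beqs
w_0=e^{-\Lambda}\sigma_0,\qquad G=e^{-\Lambda}g,\qquad \fo=e^{-\Lambda}\,\nabla\cdot(\Am\mathbf c).
\eeqs
Unlike the case $n\ge 3$, for $n=2$ the weight $e^{\Lambda}$ is no longer two-sided bounded; this is precisely why \eqref{tilM2} and \eqref{n2cond} are stated with the factor $e^{-\Lambda}$, and it is why Theorem~\ref{uniqS3} cannot be quoted verbatim. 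The first and principal step is therefore to show that $e^{\Lambda}$ remains bounded \emph{above} on $\{|\x|\ge r_0\}$; granting this, everything for $w$ transfers to $\sigma=e^{\Lambda}w$.

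For that step I would argue as follows. Since $n=2$, $c_1,c_2<0$ and \eqref{hatScond} holds, Theorem~\ref{special2}(iii) gives $\hat S(r)\to s^*\in(0,1)$ as $r\to\infty$. Viewing $\Lambda$ as a function of $r=|\x|$, \eqref{Lambdform} gives $\Lambda'(r)=\tilde F(r)$, and since $\hat S(r)\to s^*$ and $g_i(|c_i|/r)\to g_i(0)$ we have $r\,\tilde F(r)\to\kappa^*$ as $r\to\infty$, where
\beqs
\kappa^*\eqdef g_2(0)\,c_2\,F'_2(s^*)-g_1(0)\,c_1\,F'_1(s^*).
\eeqs
Using the identity $f_1(s^*)/f_2(s^*)=c_1g_1(0)/(c_2g_2(0))$ from the definition of $s^*$ (see \eqref{sstar2}) together with $F_i=1/(p_c'f_i)$, a short computation gives
\beqs
\kappa^*=\frac{|c_2|\,g_2(0)}{p_c'(s^*)\,f_2(s^*)}\Big(\frac{f_2'(s^*)}{f_2(s^*)}-\frac{f_1'(s^*)}{f_1(s^*)}\Big)<0,
\eeqs
the strict negativity coming from Assumptions~A and~B, because at $s^*\in(0,1)$ one has $p_c'>0$, $f_1>0$, $f_2>0$, $f_1'>0$ and $f_2'<0$. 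Hence $\Lambda'(r)<\kappa^*/(2r)<0$ for all large $r$, so $\Lambda(r)\to-\infty$; being continuous on $[r_0,\infty)$, $\Lambda$ attains a finite maximum there, and $0<e^{\Lambda(\x)}\le C_7\eqdef\exp\big(\sup_{|\x|\ge r_0}\Lambda(\x)\big)$ for all $|\x|\ge r_0$. (Note that $e^{-\Lambda}$ need not be bounded, which is what makes the $e^{-\Lambda}$ weights in \eqref{tilM2}, \eqref{n2cond} necessary.)

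With $e^{\Lambda}\le C_7$ established, the rest is routine. On $\Gamma=\{|\x|=r_0\}$ one has $\Lambda\equiv 0$, so $G=g$ there; consequently $\deltaten=\deltatwe$ and $\deltazero=\deltani$, and \eqref{tilM2}--\eqref{n2cond} say exactly that $w_0$, $G$ and $\fo$ are bounded. The compatibility required in \condEb\ follows from the standing assumption \eqref{datacond}, and the smoothness hypotheses of Theorem~\ref{Euniq} follow from \condEa, the regularity of $\hat S$, and Lemma~\ref{smooth}, just as in the proof of Theorem~\ref{uniqS3}. Then Theorem~\ref{Euniq} yields a solution $w\in C^{2,1}_{\x,t}(D)\cap C(\bar D)$, unique among locally-in-time bounded solutions of \eqref{nonhom}, with $|w(\x,t)|\le\deltaten+\deltazero(t+1)$. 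Since $\sigma=e^{\Lambda}w$, $e^{\Lambda}\le C_7$, and $\sup_{U\times[0,T]}e^{-\Lambda}|\sigma|=\sup_{U\times[0,T]}|w|$, parts (i) and (ii) follow with $C=C_7$. For part (iii), the weighted hypotheses on $\sigma_0$ and on $\nabla\cdot(\Am\mathbf c)$ are precisely \eqref{w0lim} and \eqref{focond} for $w$, so applying Theorem~\ref{small1} and Corollary~\ref{Reg1} to $w$ and multiplying by the bounded factor $e^{\Lambda}$ gives \eqref{limzero} and \eqref{sigmaCurve} for $\sigma$. The decisive point -- and the reason the hypothesis $c_1,c_2<0$ cannot be dropped -- is the sign of $\kappa^*$: the same computation yields $\kappa^*>0$ when $c_1,c_2>0$, and for mixed signs \eqref{hatScond} itself fails by Theorem~\ref{special2}(i)--(ii).
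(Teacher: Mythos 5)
Your proof is correct and follows essentially the same two-step structure as the paper's own argument: set $w=e^{-\Lambda}\sigma$, show $e^{\Lambda}$ is bounded above on $\{|\x|\ge r_0\}$ by establishing that $r\tilde F(r)\to\kappa^*<0$ (using $\hat S(r)\to s^*$, the relation \eqref{sstar2}, and Assumptions A and B), and then transfer the results of Theorems~\ref{Euniq}, \ref{small1} and Corollary~\ref{Reg1} from $w$ back to $\sigma$. Your derivation of the sign of $\kappa^*$ cancels $p_c''/p_c'$ directly and lands on $\frac{f_2'}{f_2}-\frac{f_1'}{f_1}<0$, while the paper's Step~2 factors through $(F_2/F_1)'=(f_1/f_2)'>0$; the two computations are algebraically equivalent.
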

\begin{proof} 
According to Theorem \ref{special2}, 
 $\lim_{r\to\infty}\hat S(r)=s^* \in (0,1),$ where $s^*$ is defined in \eqref{sstar2}.
The proof consists of two steps.

\textit{Step 1.} We show that statements (i)--(iii) hold true under the following condition
\beq\label{negCond}
F_2'(s^*) a_2^0 c_2 - F_1'(s^*)a_1^0 c_1 <0. 
\eeq

Let $c_4=-(F_2'(s^*) a_2^0 c_2 - F_1'(s^*)a_1^0 c_1)>0$. We have for any $R>r_0$ and $|\x|>R$ that
\beqs
\Lambda(\x)=\int_{r_0}^R  \tilde F(r) dr  +\int_{R}^{|\x|}  \tilde F(r)dr  =I_1(R)+I_2(R).
\eeqs 
For sufficiently large $R_0>r_0$, we have for $|\x|>R_0$ that 
 \beqs
  I_2(R_0)\le \frac 1 2\int_{R_0}^{|\x|}  \big( F'_2(\hat S(r)) a_2^0c_2 - F'_1(\hat S(r)) a_1^0c_1 \big) r^{-1} dr 
\le -\frac 14\int_{R_0}^{|\x|} c_4 r^{-1} d\xi\le 0.
  \eeqs
Obviously, $I_1(R_0)$ is finite. This gives $e^{\Lambda(\x)}\le C_{8}<\infty$ for all $|\x|\ge r_0$. Thus, 
  \beq\label{sigw}
 |\sigma| \le C_{9} |w| \quad \text{with constant }C_{9}>0. 
  \eeq
  Setting $w(\x,t)=\sigma(\x,t) e^{-\Lambda(\x)}$, we have $\mathcal L w =\fo$, where $\fo$ is as in Theorem \ref{uniqS3}.
Then (i)--(iii) easily  follow Theorems~\ref{Euniq}, \ref{small1}, Corollary \ref{Reg1} and relation \eqref{sigw}.
          
\textit{Step 2.} Now, it suffices to show that condition \eqref{negCond} is satisfied  with $c_1,c_2<0$.
On the one hand, we have from \eqref{sstar2} that 
\beqs
\frac {a_1^0c_1}{a_2^0c_2}= f(s^*) = \frac{f_1}{f_2}(s^*) =\frac {F_2(s^*)}{F_1(s^*)}.
\eeqs
Then $a_1^0c_1F_1(s^*)= a_2^0c_2F_2(s^*)\eqdef\mathcal A \neq 0$. 
On the other hand,  
\begin{align*}
F_2'(s^*) a_2^0 c_2 - F_1'(s^*)a_1^0 c_1 =  \mathcal A\Big[ \frac {F_2'(s^*)}{F_2(s^*)} - \frac{F_1'(s^*)}{F_1(s^*)}\Big]
=\mathcal A   \frac{F_1(s^*)}{F_2(s^*)}\Big(\frac {F_2}{F_1}\Big)'(s^*).
\end{align*}
The assumptions on $f_1$ and $f_2$ provide $(F_2/F_1)'(s^*) =( f_1/f_2)'(s^*)>0$ and $F_1(s^*),F_2(s^*)>0$. 
Since $c_1,c_2<0$, we have $\mathcal A<0$  and, hence, $F_2'(s^*) a_2^0 c_2 - F_1'(s^*)a_1^0 c_1<0$.
The proof is complete.
\end{proof}

\begin{remark}
 Similar to Theorem \ref{mainv}, we can use Bernstein's technique to estimate $\V_1(\x,t)$ and $\V_2(\x,t)$ uniformly in $\x\in U'\Subset U$. We do not provide details here.
\end{remark}

%

\myclearpage

\appendix

\section{}

We give proof to the statements on the range of $s_\infty$ in Example~\ref{exam}.   
Recall that $s_\infty\in [0,1]$.

In the case $\Delta=0$ of A and B, $h(r)\equiv s^*$ is the equilibrium and the conclusions are clear.
Also, for C and D, $S(r)$ is monotone and the statements easily follow. 
We focus on the remaining cases.

A.  $c_1,c_2>0$. Note that $F(r,S)>0$ iff $S>h(r)$, hence $S'(r)>0$ iff $S(r)>h(r)$.
 \begin{itemize}
 \item $\Delta<0$. Then $h(r)$ increases and $h(r)<s^*$ for all $r$.
Consider $s_0>s^*$. Then $S(r)>s^*>h(r)$ for all $r$. It follows that $S(r)$ is strictly increasing which implies $s_\infty>s_0$.
Now, consider $s_0<h(r_0)$. Then $S(r)<h(r)$ for all $r$, thus $S(r)$ is strictly decreasing and, therefore, $s_\infty<s_0$.

\item $\Delta>0$. In this case, $h(r)$ is decreasing, and $h(r)>s^*$ for all $r$. Then the arguments are the same as in the case $\Delta<0$.

\end{itemize}

B. $c_1,c_2<0$. Observe that $F(r,S)>0$ iff $S<h(r)$, hence $S'(r)>0$ iff $S(r)<h(r)$. 
\begin{itemize}
 \item $\Delta<0$. Then $h(r)$ is increasing and $h(r)<s^*$ for all $r$.

We prove (iii) first when $s_0<h(r_0)$. Exactly the same as Claim 2 in the proof of Theorem \ref{genS}, we have $S(r)\le h(r)<s^*$ for all $r$. 
  Thus $S(r)$ is increasing on $[r_0,\infty)$. Hence $s_\infty\in [s_0,s^*]$. Since $S(r)$ is strictly increasing for $r$ near $r_0$, we have $s_\infty>s_0$.

We prove (ii). Consider the subcase $h(r_0)< s_0 \le s^*$. Then there exists $r_1>r_0$ such that $S(r)>h(r)$ for $r<r_1$ and $S(r_1)=h(r_1)$.
Similar arguments to (iii), we have $S(r_1)\le S(r)\le h(r)$ for all $r<r_1$. Hence $s_\infty\le s^*$ and $s_\infty\ge h(r_1)>h(r_0)$.

In the particular case $s_0=h(r_0)$, one can show that $h(r_0)\le S(r)\le h(r)$ for all $r>r_0$.
If $S(r)\equiv h(r)$ then $s_\infty=s^*$. Otherwise, there is $r_1>r_0$ and such that $h(r_0)\le S(r_1)<h(r_1)$.
Similar to (iii) with $r_0$ playing the role of $r_1$, we have $s_\infty \in ( S(r_1),s^*]$. Hence $s_0\in (h(r_0),s^*]$.

Finally, we prove (i) when $s_0>s^*$. Clearly, $S(r)<s_0$ for all $r>r_0$. 
If $s_0>S(r)> s^*$ for all $r>r_0$ then we have $S(r)$ strictly deceasing and $s_\infty\in [s^*,s_0)$.
Otherwise, there is $r_1$ such that $S(r_1)=s^*$. Then using (ii) we obtain $s_\infty \in (h(r_0),s_*]$.

\item $\Delta>0$. Then $h(r)$ is decreasing, and $h(r)>s^*$ for all $r$. The proof is similar to the case $\Delta<0$.
 \end{itemize}

\myclearpage

\begin{thebibliography}{10}

\bibitem{AltDi84}
H.~W. Alt and E.~DiBenedetto.
\newblock Flow of oil and water through porous media.
\newblock {\em Ast{\'e}risque}, (118):89--108, 1984.
\newblock Variational methods for equilibrium problems of fluids (Trento,
  1983).

\bibitem{AltDi85}
H.~W. Alt and E.~DiBenedetto.
\newblock Nonsteady flow of water and oil through inhomogeneous porous media.
\newblock {\em Ann. Scuola Norm. Sup. Pisa Cl. Sci. (4)}, 12(3):335--392, 1985.

\bibitem{ABHI1}
E.~Aulisa, L.~Bloshanskaya, L.~Hoang, and A.~Ibragimov.
\newblock Analysis of generalized {F}orchheimer flows of compressible fluids in
  porous media.
\newblock {\em J. Math. Phys.}, 50(10):103102, 44, 2009.

\bibitem{BearBook}
J.~Bear.
\newblock {\em Dynamics of Fluids in Porous Media}.
\newblock Dover, New York, 1972.

\bibitem{BrooksCorey64}
R.~H. Brooks and A.~Corey.
\newblock Hydraulic properties of porous media.
\newblock {\em Hydrol. Pap, Colo. State Univ., Forst Collins}, (3), 1964.

\bibitem{Brull2008}
S.~Brull.
\newblock Two compressible immiscible fluids in porous media: the case where
  the porosity depends on the pressure.
\newblock {\em Adv. Differential Equations}, 13(7-8):781--800, 2008.

\bibitem{Cances2009b}
C.~Canc{\`e}s.
\newblock Finite volume scheme for two-phase flows in heterogeneous porous
  media involving capillary pressure discontinuities.
\newblock {\em M2AN Math. Model. Numer. Anal.}, 43(5):973--1001, 2009.

\bibitem{Cances2009a}
C.~Canc{\`e}s, T.~Gallou{\"e}t, and A.~Porretta.
\newblock Two-phase flows involving capillary barriers in heterogeneous porous
  media.
\newblock {\em Interfaces Free Bound.}, 11(2):239--258, 2009.

\bibitem{DiIUMJ2011}
E.~DiBenedetto, U.~Gianazza, and V.~Vespri.
\newblock Continuity of the saturation in the flow of two immiscible fluids in
  a porous medium.
\newblock {\em Indiana Univ. Math. J.}, 59(6):2041--2076, 2010.

\bibitem{JHale78}
J.~K. Hale.
\newblock {\em Ordinary differential equations}.
\newblock Robert E. Krieger Publishing Co. Inc., Huntington, N.Y., second
  edition, 1980.

\bibitem{HI1}
L.~Hoang and A.~Ibragimov.
\newblock Structural stability of generalized {F}orchheimer equations for
  compressible fluids in porous media.
\newblock {\em Nonlinearity}, 24(1):1--41, 2011.

\bibitem{HI2}
L.~Hoang and A.~Ibragimov.
\newblock Qualitative study of generalized {F}orchheimer flows with the flux
  boundary condition.
\newblock {\em Adv. Diff. Eq.}, 17(5--6):511--556, 2012.

\bibitem{HIKS1}
L.~Hoang, A.~Ibragimov, T.~Kieu, and Z.~Sobol.
\newblock Stability of solutions to generalized {F}orchheimer equations of any
  degree.
\newblock 2012.
\newblock submitted.

\bibitem{HIK1}
L.~T. Hoang, A.~Ibragimov, and T.~T. Kieu.
\newblock One-dimensional two-phase generalized {F}orchheimer flows of
  incompressible fluids.
\newblock {\em J. Math. Anal. Appln.}, 401(2):921--938, 5 2013.

\bibitem{HKP1}
L.~T. Hoang, T.~T. Kieu, and T.~V. Phan.
\newblock Properties of generalized {F}orchheimer flows in porous media.
\newblock 2013.
\newblock submitted.

\bibitem{IKO}
A.~M. Il{\cprime}in, A.~S. Kalashnikov, and O.~A. Ole{\u\i}nik.
\newblock Second-order linear equations of parabolic type.
\newblock {\em Tr. Semin. im. I. G. Petrovskogo}, (21):9--193, 341, 2001.

\bibitem{Kruzhkov77}
S.~N. Kru\v{z}kov and S.~M. Sukorjanski{\u\i}.
\newblock Boundary value problems for systems of equations of two-phase
  filtration type;\ formulation of problems, questions of solvability,
  justification of approximate methods.
\newblock {\em Mat. Sb. (N.S.)}, 104(146)(1):69--88, 175--176, 1977.

\bibitem{Kruzhkov85}
S.~N. Kruzhkov.
\newblock Uniqueness of the solutions of mixed problems for a degenerate system
  of the theory of two-phase filtration.
\newblock {\em Vestnik Moskov. Univ. Ser. I Mat. Mekh.}, (2):28--33, 95, 1985.

\bibitem{LandisBook}
E.~M. Landis.
\newblock {\em Second order equations of elliptic and parabolic type}, volume
  171 of {\em Translations of Mathematical Monographs}.
\newblock American Mathematical Society, Providence, RI, 1998.
\newblock Translated from the 1971 Russian original by Tamara Rozhkovskaya,
  With a preface by Nina Ural{\cprime}tseva.

\bibitem{Straughanbook}
B.~Straughan.
\newblock {\em Stability and wave motion in porous media}, volume 165 of {\em
  Applied Mathematical Sciences}.
\newblock Springer, New York, 2008.

\end{thebibliography}
 \def\cprime{$'$}

 \bibliographystyle{abbrv}

\end{document}